\newtheorem{theorem}{Theorem}[section]
\newtheorem{lemma}[theorem]{Lemma}
\newtheorem{proposition}[theorem]{Proposition}
\newtheorem{corollary}[theorem]{Corollary}
\theoremstyle{definition}
\newtheorem{defn}{Definition}
\newtheorem{rmk}{Remark}
\author{Philippe Sosoe \and Percy Wong}
\title{Local semicircle law in the bulk for Gaussian $\beta$-ensemble}
\date{}
\begin{document}
\maketitle
\begin{abstract}
We use the tridiagonal matrix representation to derive a local semicircle law for Gaussian beta ensembles at the optimal level of $n^{-1+\delta}$ for any $\delta > 0$. Using a resolvent expansion, we first derive a semicircle law at the intermediate level of $n^{-1/2+\delta}$; then an induction argument allows us to reach the optimal level.  This result was obtained in a different setting, using different methods, by Bourgade, Erd\"os, and Yau in \cite{BEY} and in Bao and Su \cite{BS}. Our approach is new and could be extended to other tridiagonal models.
%\keywords{Random Matrices \and Log-gases}
%\subclass{MSC 60B20 \and MSC 60K35}
\end{abstract}

\section{Introduction}
Given a symmetric $n\times n$ matrix $M$, the Stieltjes transform $s_M$ of its eigenvalue distribution function, $\mu_M$, is defined by:
\begin{align*}
s_s(z)&\equiv \int_{\mathbb{R}}\frac{1}{x-z}\,\mu_M(\mathrm{d}x),\\
\mu_M((-\infty,x]) &=\frac{1}{n}\cdot\sharp\{\lambda \le x: \lambda \text{ is an eigenvalue of } M\}.
\end{align*}
The Stieltjes transform is an important tool in the study of asymptotic properties of random matrices. Several proofs of Wigner's theorem on the eigenvalue distribution of random symmetric matrices in the large $n$ limit are based on identifying a pointwise limit of the Stieltjes transforms with the Stieltjes transform of the semicircle density:
\[\rho_{sc}(\mathrm{d}x) =\frac{2}{\pi}\sqrt{1-x^2}\mathbf{1}_{[-1,1]}\,\mathrm{d}x.\]

Up to a constant, the imaginary part of the Stieltjes transform is the convolution of $\mu_M$ with the Poisson kernel:
\[\Im s_s(z) =\pi P_\eta\ast \mu_s(z)=\pi\int\frac{\eta}{(x-E)^2+\eta^2}\,\mu_M(\mathrm{d}x) ,\quad z=E+i\eta.\]
Hence the imaginary part $\Im s_M$ is a smoothed version of the eigenvalue density, and the imaginary part $\eta=\Im z$ appears as a parameter controlling the resolution of the approximation.
In their work on universality for Wigner matrices (see \cite{ES}, \cite{ESY2}), Erdos et al. developed various versions of the \emph{local} semicircle law. Let $s_n$ be the Stieltjes transform of a normalized Wigner matrix $W$:
\[s_n(z) =\frac{1}{n}\left(n^{-1/2}W-z\right)^{-1}.\]
Then $s_n(z)$ and $s_{sc}(z)$, the Stieltjes transform of the semicircle distribution, are asymptotically arbitrarily close as long as $\Im z \gg n^{-1}$. For precise statements, see \cite{ESY}, \cite{ESY2}. The local semicircle law is a crucial input in proofs of universality for matrices with independent entries. Information on $s$ for $z=E+i\eta$ translates to information on the empirical distribution on scale $\eta$, i.e. control of distribution of the eigenvalue counting functions of intervals of length of order $\eta'>\eta$:
\[\pi\cdot\mu(E-i\eta'/2,E+i\eta'/2)= \int_{E-\eta'/2}^{E+\eta'/2} \Im s_n(E+i\eta)  \,\mathrm{d}E+O(\eta).\]
Away from the edges of the support of the semicircle density, such intervals typically contain $n\eta'$ eigenvalues. See for example Lemma \ref{lem:tv64} below. 

Apart from Wigner matrices, the central objects of interest in random matrix theory are unitarily invariant matrix ensembles, with joint eigenvalue density of the form:
\[\text{const.}\times \prod_{1\leq i<j\leq n} |\lambda_i
- \lambda_j|^\beta
\exp\left(-\sum_1^n V(\lambda_i^2)\right) \prod_i d\lambda_i,\]
with $\beta=1$, $2$, or $4$. For Gaussian potential $V(x)=(\beta/4)\cdot x^2$, this is the eigenvalue density for GOE ($\beta=1$), GUE ($\beta=2$) and GSE (Gaussian Symplectic Ensemble, $\beta=4$) matrices. For general $\beta>0$ and $V$, the formula above defines the $\beta$ \emph{ensemble density}. In \cite{BEY}, the authors compare the local statistics of $n$ ``eigenvalues'' distributed according to the beta ensemble density with a convex, analytic potential $V(x)$ and general $\beta>0$ to the statistics of the Gaussian ensembles. They show that the statistics of $\beta$ ensembles are universal in the sense that they asymptotically match those of the Gaussian ensemble with the same value of $\beta$. Although the proof of universality for $\beta$ ensembles in \cite{BEY} differs from those in the Wigner case in many respects, the initial step is also to derive a fine localization result for ``eigenvalues'' of $\beta$ ensembles with convex analytic potentials. The convexity assumption was subsequently removed in \cite{BEY2}.

The purpose of this paper is to give an alternative derivation of a local semicircle law for Gaussian $\beta$-ensembles. Our approach is based on the tridiagonal matrix model introduced by Dumitriu and Edelman \cite{DE}, and thus essentially different from that in \cite{BEY}. Note that Bao and Su \cite{BS} have derived a local semicircle law, down to scale $1/\sqrt{\log n}$, also based on the tridiagonal representation. Their method does not provide an estimate on the probability of deviation other than that this probability vanishes as $n\rightarrow \infty$.

The main result is the following:
\begin{theorem}
Let $s_{sc}$ be the Stieltjes transform of the semicircle law, and $s$ be the Stieltjes transform of the normalized Gaussian $\beta$-ensemble measure ($V=\frac{\beta}{4} x^2$). Then, for any $c,k, \epsilon >0$ and $0<\delta<1$, there is a constant  $C_{c,k,\epsilon}$ such for $z=E+i\eta$
in the region
\[D_{\epsilon,\delta} := \{z: \Im z > n^{-1+\epsilon}, -1+\delta < \Re z < 1-\delta, |z| \leq 10\},\]
we have:
\[\mathbb{P}(|s_{\text{sc}}(z)-s_{n,\beta}(z)|>c)\le C_{c,k,\epsilon,\delta}n^{-k}.\]
\end{theorem}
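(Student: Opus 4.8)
The plan is to work throughout with the Dumitriu--Edelman tridiagonal representation $H=H_{n,\beta}$ of the $\beta$-ensemble, normalized so that its spectrum concentrates on $[-1,1]$: the off-diagonal entries are independent $\gamma_1,\dots,\gamma_{n-1}$ with $\gamma_k^2$ a rescaled $\chi^2$, $\mathbb E\gamma_k^2=v(k/n)$ where $v(t)=(1-t)/4$ and $\operatorname{Var}(\gamma_k^2)=O(n^{-1})$ in the bulk, and the diagonal entries are independent centred Gaussians of size $O(n^{-1/2})$. Tridiagonality makes the Schur complement transparent: deleting row/column $j$ disconnects the chain, so with $G(z)=(H-z)^{-1}$,
\[
G_{jj}(z)=\frac{1}{(H)_{jj}-z-\gamma_{j-1}^2\,\underline m_{j-1}(z)-\gamma_j^2\,m_{j+1}(z)},
\]
where $m_{j+1}$ is the corner resolvent entry of $H$ restricted to $\{j+1,\dots,n\}$ and $\underline m_{j-1}$ that of $H$ restricted to $\{1,\dots,j-1\}$, each obeying a one-sided continued-fraction recursion $m_j^{-1}=(H)_{jj}-z-\gamma_j^2 m_{j+1}$, $\underline m_j^{-1}=(H)_{jj}-z-\gamma_{j-1}^2\underline m_{j-1}$. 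The structural point is independence: $m_{j+1}$ uses only indices $>j$, $\underline m_{j-1}$ only indices $<j$, so both are independent of $\{(H)_{jj},\gamma_{j-1},\gamma_j\}$. The natural deterministic profile is the fixed point $m_{v(t)}(z)$ of $w\mapsto(-z-v(t)w)^{-1}$, i.e.\ the Stieltjes transform of the semicircle on $[-2\sqrt{v(t)},2\sqrt{v(t)}]$; using $-z=v(t)m_{v(t)}+m_{v(t)}^{-1}$ one gets the candidate $G_{jj}\approx m_{v(j/n)}/(1-v(j/n)m_{v(j/n)}^2)$, so $s_{n,\beta}(z)=\tfrac1n\sum_jG_{jj}$ should sit near the Riemann sum tending to $\int_0^1 m_{v(t)}(z)/(1-v(t)m_{v(t)}^2(z))\,\mathrm dt$. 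I would first record, as a deterministic calculus lemma, that this integral equals $s_{sc}(z)$ — equivalent to the classical Dumitriu--Edelman fact that the averaged local density of $H$ is the semicircle. The problem then reduces to bounding $\delta_j(z):=m_j(z)-m_{v(j/n)}(z)$ (and $\underline\delta_j$) uniformly in $j$.

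Expanding $m_j=((H)_{jj}-z-\gamma_j^2(m_{v((j+1)/n)}+\delta_{j+1}))^{-1}$ around $\delta_{j+1}=0$ and zero fresh randomness, and using that $m_{v(j/n)}$ is the fixed point, gives
\[
\delta_j=\bar a_j\,\delta_{j+1}+\beta_j+r_j,\qquad \bar a_j:=v(j/n)\,m_{v(j/n)}(z)^2,
\]
with $\beta_j=m_{v(j/n)}^3(\gamma_j^2-v(j/n))-m_{v(j/n)}^2(H)_{jj}+O(n^{-1})$ — a mean-zero random part of size $O(n^{-1/2})$ in the bulk plus a deterministic $O(n^{-1})$ piece from the slow variation of $v$ — and $r_j$ quadratically small in $(|\delta_{j+1}|,|\gamma_j^2-v(j/n)|,|(H)_{jj}|)$; unrolling, $\delta_j=\sum_{l\ge j}\bigl(\prod_{i=j}^{l-1}\bar a_i\bigr)(\beta_l+r_l)$. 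The key analytic input is the contraction estimate: the scaling $m_{v(t)}(z)=v(t)^{-1/2}m_{sc}(z/\sqrt{v(t)})$ gives $|\bar a_j|=|m_{sc}(z/\sqrt{v(j/n)})|^2$, which for $\Re z\in(-1+\delta,1-\delta)$ is $\le 1-c(\delta)\,\Im z$ for every $j$ — in fact $1-|\bar a_j|\gtrsim\sqrt{\Im z}$ near the ``local band edge'' index $j/n\approx1-(\Re z)^2$ where $z$ leaves $[-2\sqrt{v(j/n)},2\sqrt{v(j/n)}]$, with $|\bar a_j|$ bounded away from $1$ for $j/n$ near $0$ and decaying for $j/n$ near $1$; thus $\prod_{i=j}^{l-1}|\bar a_i|\le e^{-c\,\Im z\,(l-j)}$, so each $\beta_l,r_l$ influences $\delta_j$ only over a window of length $O((\Im z)^{-1})$.

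Stage 1 handles $\Im z\ge n^{-1/2+\delta'}$ by a resolvent expansion. Starting from $\Im z\asymp1$, where $\|H-\bar H\|_{\mathrm{op}}=O(n^{-1/2}\log n)$ ($\bar H$ the deterministic tridiagonal matrix with off-diagonals $\sqrt{v(k/n)}$, zero diagonal) makes the Neumann series $G=\sum_p\bar G(-(H-\bar H)\bar G)^p$ converge, one descends along a fine net in $\Im z$; the first correction $-\tfrac1n\operatorname{Tr}((H-\bar H)\bar G^2)$ is a deterministically-weighted sum of centred independent variables concentrating at rate $n^{-2\delta'}\mathrm{polylog}$, and higher orders are bounded via the Combes--Thomas decay $|\bar G_{ij}|\le Ce^{-c\Im z|i-j|}$ together with $\|H-\bar H\|_{\mathrm{op}}/\Im z\le n^{-\delta'}\log n$. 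The same bound comes from the recursion picture: the crude estimates $|r_l|\lesssim\Lambda^2+\Lambda n^{-1/2}$ (with $\Lambda:=\sup_i|\delta_i|$) and a Bernstein estimate for $\sum_l(\prod\bar a)\beta_l$ give a self-consistent inequality $\Lambda\lesssim(\log n)^C/\sqrt{n\Im z}+(\log n)^C(\Lambda^2+\Lambda n^{-1/2})/\Im z$ which, by the standard dichotomy-plus-continuity-in-$\Im z$ argument, closes with $\Lambda\lesssim(\log n)^C/\sqrt{n\Im z}$ precisely as long as $n^{-1/2}/\Im z\to0$. Stage 2 is an induction improving this down to $\Im z\ge n^{-1+\epsilon}$. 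The Stage-1 bottleneck term $\Lambda n^{-1/2}/\Im z$ came from bounding the piece $m_{v(j/n)}^3(\gamma_j^2-v(j/n))\delta_{j+1}$ of $r_j$ crudely, but $\gamma_j^2-v(j/n)$ is mean zero and independent of $\delta_{j+1}$, so its weighted sum is a martingale-type sum of conditional variance $\lesssim\Lambda^2/(n\Im z)$; and the genuinely quadratic piece $\propto\delta_{j+1}^2$, whose naive contribution would be $\sim\Lambda^2/\Im z$, is tamed by the rapid oscillation of $\arg\prod_i\bar a_i$ (summation by parts against a slowly-varying envelope), which replaces the $(\Im z)^{-1}$ amplification of a slowly-varying inhomogeneity by an $O(1)$ amplification. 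Iterating the improved self-consistent inequality (again with dichotomy and continuity, re-using at each smaller scale the already-small value of $\Lambda$) yields $\Lambda\lesssim(\log n)^C/\sqrt{n\Im z}$, whence $|s_{sc}(z)-s_{n,\beta}(z)|\lesssim(\log n)^C n^{-c(\epsilon)}$ on $D_{\epsilon,\delta}$, which is $<c$ for $n$ large (the small-$n$ range is absorbed into $C_{c,k,\epsilon,\delta}n^{-k}\ge1$). All probabilistic steps run with arbitrarily-high-polynomial tails; intersecting over a polynomial net in $z$ (with $s_{n,\beta}$ Lipschitz in $z$, constant $\le(\Im z)^{-2}\le n^2$) and over $j\le n$ keeps the exceptional probability $\le C_{c,k,\epsilon,\delta}n^{-k}$. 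The bottom few rows, where $\beta(n-k)$ is bounded and $\gamma_k$ is far from Gaussian, have local band of width $O(n^{-1/2})$ and contribute only $O(n^{-1/2})$ to $s_{n,\beta}$, so they are discarded; the transition region $|j/n-(1-(\Re z)^2)|$ small is cut out at negligible cost.

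The step I expect to be the main obstacle is Stage 2: making the nonlinear remainder $r_j$ harmless \emph{uniformly in $j$} down to the optimal scale, which forces one to combine three ingredients simultaneously — the position-dependent contraction $1-|\bar a_j|\gtrsim\Im z$, the independence and mean-zero-ness of the fresh randomness at each index (so linear remainder terms concentrate), and the oscillation of $\arg\prod_i\bar a_i$ (so the slowly-varying quadratic term does not pick up the fatal $(\Im z)^{-1}$ factor). Because one is propagating a recursion in $j$ rather than solving a single self-consistent equation, the usual one-shot ``$\Lambda\le(\text{small})+C\Lambda^2$'' dichotomy must be interleaved with these weighted-sum estimates, and the book-keeping near the local-band edge $j/n\approx1-(\Re z)^2$ — where resolvent entries are of order $(\Im z)^{-1/2}$ and $1-v(j/n)m_{v(j/n)}^2$ is of order $\sqrt{\Im z}$ — needs extra care. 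By contrast, the deterministic identity $\int_0^1 m_{v(t)}/(1-v(t)m_{v(t)}^2)\,\mathrm dt=s_{sc}$ and the entrywise control of $\bar G$ are comparatively routine.
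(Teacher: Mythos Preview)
Your two-stage architecture --- resolvent expansion to reach $\Im z>n^{-1/2+\epsilon}$, then an induction to descend to $n^{-1+\epsilon}$ --- matches the paper's, but the mechanisms inside each stage differ substantially.

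For Stage~1 the paper does not use Combes--Thomas decay or the continued-fraction recursion. It expands $R^\beta$ around the exact zero-temperature resolvent $R^\infty=(n^{-1/2}A_{n,\infty}-z)^{-1}$ and controls the expansion term-by-term via sharp entrywise bounds of the form $|R^\infty_{kl}|\lesssim n^{1/2-\epsilon'}k^{-1/4}l^{-1/4}|\sqrt k-\sqrt l|^{-1}$, extracted from Plancherel--Rotach asymptotics for Hermite polynomials (including the Airy transition region). Your route --- either $\|H-\bar H\|_{\mathrm{op}}\|\bar G\|_{\mathrm{op}}<1$ or the crude recursion bound --- should also close at this scale and is arguably more portable, though it yields less entrywise information. (Your deterministic identity $\int_0^1 m_{v(t)}/(1-v(t)m_{v(t)}^2)\,\mathrm dt=s_{sc}$ is correct; indeed $m_v/(1-vm_v^2)=-1/\sqrt{z^2-4v}$, and the $t$-integral is elementary.)

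The real divergence is Stage~2. The paper's induction is short because it uses a structural fact you do not invoke: by a result of Dumitriu, in the diagonalization $A_\beta=Q\Lambda Q^*$ the first row $(q_1,\dots,q_n)$ of $Q$ is \emph{independent} of $\Lambda$ and consists of normalized i.i.d.\ $\chi_\beta$'s, so $\mathbb E_q q_j^2=1/n$. Hence $R_{11}=\sum_j q_j^2(\lambda_j-z)^{-1}$ and $s_n=\tfrac1n\sum_j(\lambda_j-z)^{-1}$ differ by a McDiarmid-type concentration error $O\bigl(n^{-1+c}(\sum_j|\lambda_j-z|^{-2})^{1/2}\bigr)$, which the level-$a$ semicircle law bounds by $O(n^{(a-1)/2+c}/\Im z)$. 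One Schur complement for $R_{11}$, interlacing for the minor, and the standard fixed-point argument then give $R_{11}=(-z-R_{11})^{-1}+o(1)$ down to $\Im z>n^{(a-1)/2+\delta}$. No recursion in $j$, no quadratic remainder, no local-band-edge analysis.

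Your Stage~2 instead propagates the full recursion $\delta_j=\bar a_j\delta_{j+1}+\beta_j+r_j$ uniformly in $j$. The martingale handling of the cross term $(\gamma_j^2-v)\delta_{j+1}$ is fine, but the treatment of the genuine quadratic piece $\sum_l(\prod_{i=j}^{l-1}\bar a_i)\,c_l\delta_{l+1}^2$ by ``summation by parts against a slowly-varying envelope'' does not work as written: $\delta_{l+1}^2$ is \emph{not} slowly varying --- from $\delta_l\approx\bar a_l\delta_{l+1}$ its argument rotates by $2\arg\bar a_l=O(1)$ each step. There \emph{is} phase cancellation in this sum, but it comes from the interaction of the phases of $\prod_{i<l}\bar a_i$ and of $\delta_{l+1}^2\approx(\prod_{i>l}\bar a_i)^2\delta_L^2$; extracting it requires expanding $\delta_{l+1}^2$ into its linearized, cross, and second-moment parts and running a further geometric-sum/martingale argument on each --- considerably more than you indicate, and all of it must survive the local-band-edge window $j/n\approx 1-E^2$ where $|1-\bar a_j|$ degenerates to order $\sqrt{\Im z}$ and the partial sums $\sum_{l'}\prod\bar a$ are no longer $O(1)$. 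Without that analysis the dichotomy inequality $\Lambda\lesssim\Lambda_0+\Lambda^2/\Im z$ only closes down to $\Im z\gtrsim n^{-1/3}$. The paper bypasses all of this via the eigenvector-independence trick.
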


The following corollary can be derived from the theorem, see e.g. \cite{BEY}, \cite{ESY}

\begin{corollary}
Let the semiclassical location be defined as the number $\gamma_j$ such that 
\begin{displaymath}
\int_{-\infty}^{\gamma_j}\rho_{sc}(s) ds = \frac{j}{n}
\end{displaymath}
then for any $\delta, \epsilon > 0$, $n > 1$ and any $j \in [\delta n, (1-\delta) n]$,
\begin{displaymath}
|\lambda_j - \gamma_j| < n^{-1+\epsilon}
\end{displaymath}
with probability no smaller than $1-C_{k,\epsilon,\delta}n^{-k}$.
\end{corollary}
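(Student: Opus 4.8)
The plan is to convert the bound on the Stieltjes transform into a bound on the eigenvalue counting function $N(E):=\#\{i:\lambda_i\le E\}$ and then invert the defining relation $nF_{sc}(\gamma_j)=j$, where $F_{sc}(E):=\int_{-\infty}^{E}\rho_{sc}$. Fix $\delta,\epsilon,k$; we may assume $\delta<\tfrac12$, else no admissible $j$ exists. Choose $\delta'\in(0,\delta)$, depending only on $\delta$, so small that $[\gamma_{\lceil\delta n\rceil},\gamma_{\lfloor(1-\delta)n\rfloor}]\subset[-1+2\delta',1-2\delta']$, and apply the theorem throughout with bulk parameter $\delta'$ and with $\epsilon$ replaced by $\epsilon/4$. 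Since $s_{n,\beta}$ is Lipschitz in $z$ with constant $O(n^{2})$ on $\{\Im z\ge n^{-1+\epsilon/4}\}$, a union bound over a polynomially fine net of $D_{\epsilon/4,\delta'}$ — applying the theorem at each net point with a suitably enlarged exponent — produces an event $\Omega$ with $\mathbb{P}(\Omega)\ge 1-C_{k,\epsilon,\delta}n^{-k}$ on which the conclusion of the theorem holds for all $z\in D_{\epsilon/4,\delta'}$ at once. All estimates below are on $\Omega$. Reaching the \emph{optimal} scale $n^{-1+\epsilon}$ forces us to use not merely the ``$\le c$'' bound but the quantitative rate $|s_{sc}(z)-s_{n,\beta}(z)|\lesssim n^{o(1)}(n\Im z)^{-1}$ that the resolvent expansion behind the theorem in fact delivers.

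Two facts follow at once. First, the local density bound: from $\Im s_{n,\beta}(E+i\eta)\ge(2n\eta)^{-1}\,\#\{i:|\lambda_i-E|<\eta\}$ and $\Im s_{n,\beta}(E+i\eta)\le C$ (as $\Im s_{sc}=O(1)$ and the error is $o(1)$), we get $\#\{i:|\lambda_i-E|<\eta\}\le Cn\eta$ uniformly for $-1+\delta'<E<1-\delta'$ and $n^{-1+\epsilon/4}\le\eta\le 10$. Second, an anchor: one needs $|N(0)-nF_{sc}(0)|\le n^{o(1)}$, which follows from the $\lambda\mapsto-\lambda$ symmetry of the Gaussian weight (whence $\mathbb{E}\,N(0)=n/2$ exactly) together with the standard concentration of the eigenvalue counting function for $\beta$-ensembles (fluctuations $O(\sqrt{\log n})$); alternatively one could invoke a local law at the spectral edge, which is not needed elsewhere. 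The core step is to propagate the anchor through the bulk:
\[
|N(E)-nF_{sc}(E)|\le n^{\epsilon/2}\qquad\text{for all }E\in[-1+\delta',1-\delta'],
\]
where the $n^{\epsilon/2}$ absorbs logarithmic factors. For $E>0$ one writes $N(E)-N(0)=\#\{i:0<\lambda_i\le E\}$ and represents this count, via the Poisson-kernel identity of the Introduction (equivalently a Helffer--Sj\"ostrand representation of $\mathbf{1}_{(0,E]}$), as $\tfrac n\pi\int_0^E\Im s_{n,\beta}(x+i\eta)\,dx$ up to a boundary contribution $\le\#\{i:|\lambda_i|<C\eta\}+\#\{i:|\lambda_i-E|<C\eta\}\le Cn\eta$ and a tail contribution $O(n\eta\log n)$, with $\eta=n^{-1+\epsilon/4}$. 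Replacing $s_{n,\beta}$ by $s_{sc}$ — for which $\tfrac n\pi\int_0^E\Im s_{sc}(x+i\eta)\,dx=n(F_{sc}(E)-F_{sc}(0))+O(n\eta\log n)$ — leaves $\tfrac n\pi\int_0^E|s_{n,\beta}-s_{sc}|(x+i\eta)\,dx$ to be controlled, and this is the main obstacle. The crude estimate $\tfrac n\pi|E|\sup|s_{n,\beta}-s_{sc}|$ is of order $n\times(\text{error})$ and hence useless; one must use cancellation in $s_{n,\beta}-s_{sc}$, i.e. in the Helffer--Sj\"ostrand picture integrate by parts in $\Im z$ so that only a $\log$-weighted integral of the rate $n^{o(1)}(ny)^{-1}$ over $n^{-1+\epsilon/4}\le y\le 1$ enters (contributing $n^{o(1)}$), while the range $0<y<n^{-1+\epsilon/4}$ is handled by the trivial bound $|s_{n,\beta}(x+iy)|\le y^{-1}$. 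Combined with the anchor this yields the display (the case $E<0$ being symmetric).

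It remains to invert. Fix $j\in[\delta n,(1-\delta)n]$ and put $t=\tfrac12 n^{-1+\epsilon}$. Since $\rho_{sc}\ge c_\delta>0$ on $[-1+\delta',1-\delta']$ and $\gamma_j\pm t$ lies there for $n$ large, $nF_{sc}(\gamma_j+t)\ge j+c_\delta nt=j+\tfrac{c_\delta}{2}n^{\epsilon}$, so on $\Omega$,
\[
N(\gamma_j+t)\ge nF_{sc}(\gamma_j+t)-n^{\epsilon/2}\ge j+\tfrac{c_\delta}{2}n^{\epsilon}-n^{\epsilon/2}\ge j
\]
for $n$ large, hence $\lambda_j\le\gamma_j+t<\gamma_j+n^{-1+\epsilon}$; symmetrically $N(\gamma_j-t)\le j-\tfrac{c_\delta}{2}n^{\epsilon}+n^{\epsilon/2}<j$, so $\lambda_j>\gamma_j-t>\gamma_j-n^{-1+\epsilon}$. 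Thus $|\lambda_j-\gamma_j|<n^{-1+\epsilon}$ on $\Omega$. For the bound to hold simultaneously for all admissible $j$ one unions over the at most $n$ such $j$, which is absorbed by increasing $k$; and for the finitely many small $n$ the statement is trivial once the constant is enlarged.
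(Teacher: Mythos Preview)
The paper does not prove this corollary; it simply states that it ``can be derived from the theorem, see e.g.\ \cite{BEY}, \cite{ESY}''. Your approach follows the standard route in those references: Stieltjes transform $\to$ counting function via a Helffer--Sj\"ostrand representation $\to$ inversion to rigidity. In that sense there is nothing to compare against, and your sketch is in line with what the paper defers to.

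There is, however, a genuine gap that you correctly flag but do not close. The theorem as stated gives only $|s_{n,\beta}(z)-s_{sc}(z)|<c$ for each fixed $c>0$, and this $o(1)$ bound is \emph{not} enough for rigidity at scale $n^{-1+\epsilon}$: after the integration by parts in $\Im z$, the Helffer--Sj\"ostrand argument needs a rate of the form $|s_{n,\beta}-s_{sc}|\lesssim n^{o(1)}/(n\Im z)$. You assert that ``the resolvent expansion behind the theorem in fact delivers'' this rate, but you do not verify it, and the inductive scheme in Section~\ref{sec:one} is not written to track such a rate---Propositions~\ref{prop:induct1} and~\ref{prop:induct2} are themselves stated only as $o(1)$. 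So one must either extract the stronger bound by redoing the bookkeeping in those proofs, or import the quantitative local law from \cite{BEY}, which is presumably what the paper's citation intends. A secondary issue is your anchor $|N(0)-n/2|\le n^{o(1)}$: it rests on concentration for $\beta$-ensemble linear statistics that is external to this paper, and the alternative you mention (an edge local law) is likewise unavailable here since the theorem is restricted to the bulk $|\Re z|<1-\delta$.
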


\begin{rmk}
The availability of a tridiagonal matrix model is not specific to the Gaussian case and our approach can potentially be extended to give local semicircle laws for other matrix ensembles with tridiagonal representations, such as the Laguerre ensembles considered in \cite{DE}.
\end{rmk}

The entries of the tridiagonal representation of $\beta$ ensemble matrices have very different sizes. The approach based on Schur complementation and predecessor comparison used in \cite{ESY}, \cite{ESY2} as well as further works on the local semicircle law relies on the self-similar structure of Wigner matrices and thus cannot be directly applied in our setting. 

As explained in Section \ref{sec:onehalf}, we proceed by separating the model matrix into its expectation, a deterministic Jacobi matrix whose resolvent can be analyzed using asymptotics for orthogonal polynomials in the complex plane (Section \ref{sec:zero}), and a fluctuation part, whose contribution is shown to be asymptotically negligible. To control the resolvent expansion, we require some estimates for the entries of the resolvent of the expectation matrix; these are established in Section \ref{sec:resolventest}.
The expansion provides us with control of the resolvent down to $\Im z > n^{-1/2+\epsilon}$: an inductive argument in Section \ref{sec:one} establishes the local semicircle law in the region $\Im z> n^{-1+\epsilon}$.

\section{Notation}
\begin{defn}
Let $\beta \geq 1$, the Gaussian $\beta$-ensemble is an ensemble of
eigenvalues in $\mathbb{R}^n$ that have the following probability density:
\begin{equation} \label{eqn:beta}
 df(\lambda_1,\hdots,\lambda_n) = G_{n,\beta} \prod_{1\leq i<j\leq n} |\lambda_i
- \lambda_j|^\beta
\exp\left(-\frac{\beta}{4}\sum_1^n \lambda_i^2\right) \prod_i d\lambda_i, 
\end{equation}
where $G_{n,\beta}$ is a normalization constant.
\end{defn}

\begin{defn}
Given a measure $\mu$ supported on $\mathbb{R}$, the \emph{Stietjes transform} of $\mu$ is defined to be the complex-valued function
\begin{equation}
s_\mu(z) = \int \frac{1}{x-z} d\mu(x).
\end{equation}
The Stietjes transform is analytic on the upper half plane and converges weakly to $\mu(x)$ as $\Im z \rightarrow 0$ .
\end{defn}

\begin{defn}
The \emph{Hermite polynomials} $H_n(x)$ are orthogonal polynomials with respect to the weight $e^{-x^2}$ on the real line (this is the ``physicist's normalization''). The $n$-th Hermite polynomial has leading coefficient $2^n$, and is given by the Rodrigues formula:
\[H_n(x) = (-1)^ne^{x^2}\frac{\mathrm{d}^n}{\mathrm{d}x^n}e^{-x^2}.\]

The $n$-th Hermite function $E_n$ is defined as:
\[E_n(x) = \frac{H_n(\sqrt{2}x)}{2^{(n-1)/2}\sqrt{n}}e^{-x^2/2}.\]
The functions $E_n$, $n\ge 0$ form an orthonormal set in $L^2(\mathbb{R})$.

\end{defn}

Dumitriu and Edelman \cite{DE}, generalizing earlier observations by Trotter \cite{T} in the case $\beta =2 $, introduced the tridiagonal model for the Gaussian $\beta$-ensembles, among other matrix models:

\begin{theorem}
 Consider the matrix given by 
\begin{equation} \label{eqn:mat}
 A_{n,\beta} = \frac{1}{\sqrt{2\beta}}\left( \begin{array}{ccccccc}
               N(0,2) & \chi_{(n-1)\beta} & 0 & \ldots & \ldots & \ldots &  0 \\
	       \chi_{(n-1)\beta} & N(0,2) & \chi_{(n-2)\beta} & 0 & \ldots &
\ldots & 0 \\
		0 & \chi_{(n-2)\beta} & N(0,2) & \chi_{(n-3)\beta}
& 0 & \ldots & 0 \\
		\vdots & \ddots & \ddots & \ddots & \ddots & \ddots & \vdots \\
	      0 & \ldots & 0 & \chi_{3\beta} & N(0,2) & \chi_{2\beta} & 0 \\
	      0 & \ldots & \ldots & 0 & \chi_{2\beta} & N(0,2) & \chi_\beta \\
	      0 & \ldots & \ldots & \ldots & 0 & \chi_\beta & N(0,2)
              \end{array} \right)
\end{equation}
where $N(0,2)$ denotes a random variable whose distribution follows the Gaussian
distribution with mean $0$ and variance $2$; $\chi_k$ is a random variable having a chi distribution with $k$ degrees of freedom.  The upper triangular
part of the matrix consists of independent random variables and the matrix is
symmetric.  The joint density of the eigenvalues of $A_{n,\beta}$ coincides with the
$\beta$-ensemble density.
\end{theorem}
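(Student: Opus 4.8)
The plan is to carry out the change of variables of Dumitriu and Edelman \cite{DE}: pass from the entries of $A_{n,\beta}$ to its spectral data and verify that the induced density factors as the $\beta$-ensemble density \eqref{eqn:beta} in the eigenvalues, times a density in the eigenvector data that integrates to a constant. Write $A_{n,\beta}$ as a Jacobi matrix with diagonal entries $a_1,\dots,a_n$ and off-diagonal entries $b_1,\dots,b_{n-1}$. Since a $\chi$ variable is almost surely positive, $b_k>0$ a.s., so $A_{n,\beta}$ is a.s. an irreducible Jacobi matrix; such a matrix has simple spectrum, and the first coordinate of each normalized eigenvector is nonzero. We may therefore encode $A_{n,\beta}$ by the pair $(\lambda,q)$, where $\lambda=(\lambda_1<\dots<\lambda_n)$ is the ordered spectrum and $q=(q_1,\dots,q_n)$, with $q_i>0$ and $\sum_i q_i^2=1$, collects the first coordinates of the sign-normalized eigenvectors; equivalently $q$ determines the spectral measure $\mu=\sum_i q_i^2\,\delta_{\lambda_i}$ of $A_{n,\beta}$ at $e_1$. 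By the inverse spectral theory of finite Jacobi matrices (equivalently, Gram--Schmidt applied to $1,x,\dots,x^{n-1}$ in $L^2(\mu)$, which produces the three-term recurrence coefficients $a_k,b_k$), the map $(a,b)\mapsto(\lambda,q)$ is a bijection from Jacobi matrices with positive off-diagonal entries onto $\{\lambda_1<\dots<\lambda_n\}\times\{q\in\mathbb{R}_{>0}^n:\sum_i q_i^2=1\}$.

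The second step is to compute the Jacobian of this bijection. Using the cofactor recursion for tridiagonal determinants — or the Hankel-determinant formulas expressing the $b_k^2$ through the moments $\sum_i q_i^2\lambda_i^\ell$ of $\mu$ — one writes $\prod_i da_i\prod_k db_k$ as an explicit factor times $\prod_i d\lambda_i\,dS(q)$, where $dS$ is surface measure on the sphere and the factor involves the Vandermonde $\prod_{i<j}|\lambda_i-\lambda_j|$, the $q_i$, and the $b_k$ (the latter being functions of $(\lambda,q)$). The algebraic identities that make the final density collapse are the trace relations
\[
\sum_i a_i=\operatorname{tr}A_{n,\beta}=\sum_i\lambda_i,\qquad \sum_i a_i^2+2\sum_k b_k^2=\operatorname{tr}A_{n,\beta}^2=\sum_i\lambda_i^2,
\]
together with the identity relating $\prod_k b_k^{(n-k)\beta}$ to $\prod_{i<j}|\lambda_i-\lambda_j|^\beta$ and a power of $\prod_i q_i$. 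I expect this Jacobian computation to be the main obstacle: it is where the tridiagonal structure is really used, and tracking the precise exponents of the $b_k$ and $q_i$ (together with the choice of chart on the sphere) requires care.

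The third step assembles the densities. The diagonal entries of $A_{n,\beta}$ are i.i.d.\ centered Gaussians and the off-diagonal entries are independent rescaled $\chi_{(n-k)\beta}$ variables, so the joint law of $(a,b)$ has density proportional to $\big(\prod_k b_k^{(n-k)\beta-1}\big)\exp\big(-\tfrac{\beta}{4}(\sum_i a_i^2+2\sum_k b_k^2)\big)$, the scalar prefactor in \eqref{eqn:mat} being calibrated so that the Gaussian and chi weights combine into this single expression in $\operatorname{tr}A_{n,\beta}^2$. By the trace relation the exponent equals $-\tfrac{\beta}{4}\sum_i\lambda_i^2$, and after the change of variables the power $\prod_k b_k^{(n-k)\beta-1}$ together with the Jacobian produces $\prod_{i<j}|\lambda_i-\lambda_j|^\beta$ times a factor $\prod_i q_i^{\beta-1}$. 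Hence, on $\{\lambda_1<\dots<\lambda_n\}\times\{q\in\mathbb{R}_{>0}^n:\sum_i q_i^2=1\}$, the density of $(\lambda,q)$ is
\[
\text{const.}\times\prod_{i<j}|\lambda_i-\lambda_j|^\beta\exp\Big(-\tfrac{\beta}{4}\sum_i\lambda_i^2\Big)\times\prod_i q_i^{\beta-1}.
\]

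It remains to integrate out $q$ and to drop the ordering constraint. Substituting $w_i=q_i^2$, the integral of $\prod_i q_i^{\beta-1}$ against surface measure over the positive orthant of the unit sphere becomes the normalizing constant of a Dirichlet$(\beta/2,\dots,\beta/2)$ distribution, which is finite for every $\beta>0$; and symmetrizing over the $n!$ orderings of $\lambda$ removes the constraint $\lambda_1<\dots<\lambda_n$ at the cost of a further constant. What remains is precisely the density \eqref{eqn:beta}, with the resulting overall constant equal to $G_{n,\beta}$. Finally, the sets set aside above — coincident eigenvalues, some $q_i=0$, or some $b_k=0$ — have Lebesgue measure zero, so discarding them changes none of the densities. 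For $\beta=1,2,4$ the statement can also be cross-checked against the Householder tridiagonalization of the Gaussian orthogonal, unitary and symplectic ensembles.
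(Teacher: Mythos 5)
The paper offers no proof of this theorem---it is quoted verbatim from Dumitriu and Edelman \cite{DE}---and your outline is exactly their argument (the spectral bijection $(a,b)\leftrightarrow(\lambda,q)$, the Jacobian combined with the identity $\prod_k b_k^{\,n-k}=\prod_{i<j}|\lambda_i-\lambda_j|\prod_i q_i$, the trace relations, and integrating out the Dirichlet factor in $q$), so your approach is correct and matches the cited source, with the Jacobian computation correctly identified as the one step requiring real work. One bookkeeping caveat, inherited from the statement rather than introduced by you: with the prefactor $1/\sqrt{2\beta}$ as written, the joint density of $(a,b)$ carries the exponent $-\tfrac{\beta}{2}\operatorname{tr}A_{n,\beta}^2$ rather than the $-\tfrac{\beta}{4}\operatorname{tr}A_{n,\beta}^2$ you assert (check $n=1$: $\tfrac{1}{\sqrt{2\beta}}N(0,2)=N(0,1/\beta)$, whereas \eqref{eqn:beta} calls for $N(0,2/\beta)$), so either the matrix should be scaled by $1/\sqrt{\beta}$ or the potential in \eqref{eqn:beta} should read $\tfrac{\beta}{2}\sum\lambda_i^2$ for the claim to hold exactly.
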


We will refer to $A_{n,\beta}$ as the \emph{symmetric model} for the $\beta$-ensemble. Conjugation by the diagonal matrix
\begin{equation} \label{eqn:diagmat}
D_{n,\beta} \equiv \operatorname{diag}(d_1, \ldots, d_n),
\end{equation}
where
\[d_i(n,\beta) = (\beta n)^{(1-i)/2}\prod_{j=1}^{i-1} \chi_{(n-j)\beta}.\]
(see e.g. \cite{ES}), shows that $df$ is also the joint probability density for eigenvalues of the (non-symmetric) matrix

\begin{equation} \label{eqn:mat2}
 \tilde{A}_{n,\beta} = \frac{1}{\sqrt{2\beta}}\left( \begin{array}{ccccccc}
               N(0,2) & \sqrt{\beta n} & 0 & \ldots & \ldots & \ldots &  0 \\
	       \frac{1}{\sqrt{\beta n}}\chi^2_{(n-1)\beta} & N(0,2) & \sqrt{\beta n} & 0 & \ldots &
\ldots & 0 \\
		0 & \frac{1}{\sqrt{\beta n}}\chi^2_{(n-2)\beta} & N(0,2) & \sqrt{\beta n}
& 0 & \ldots & 0 \\
		\vdots & \ddots & \ddots & \ddots & \ddots & \ddots & \vdots \\
	      0 & \ldots & 0 & \frac{1}{\sqrt{\beta n}}\chi_{3\beta} & N(0,2) & \sqrt{\beta n} & 0 \\
	      0 & \ldots & \ldots & 0 & \frac{1}{\sqrt{\beta n}}\chi_{2\beta} & N(0,2) & \sqrt{\beta n} \\
	      0 & \ldots & \ldots & \ldots & 0 & \frac{1}{\sqrt{\beta n}}\chi_\beta & N(0,2)
              \end{array} \right).
\end{equation}

Throughout the rest of this paper, we will be switching between the symmetric and the asymmetric models and will keep track of the changes involved.

For the rest of the paper, we shall use the following notion of an event depending on some index $n$ having overwhelming probability:
\begin{defn}
We say a sequence of events $E_n$ holds with overwhelming probability if for all $n$, $\mathbb{P}(E_n) \geq 1 - O_C(n^{-C})$ for every constant $C$.
\end{defn}
It should be observed that a union of $n^k$ events of overwhelming probability for some fixed $k>0$ still holds with overwhelming probability.

Lastly, $C>0$ will always denote a constant whose exact value need not concern us.

\section{Local semicircle law for zero temperature}
\label{sec:zero}
Let $n$ be a positive integer, and consider the ``zero-temperature'' $\beta$ ensemble matrix:
\begin{equation} \label{eqn:matzero}
A_{n,\infty}= \frac{1}{\sqrt{2}}\left( 
\begin{array}{ccccc}
0& \sqrt{n-1}& 0 & & \\
\sqrt{n-1}& 0 & \sqrt{n-2} & & \\
& \ddots &\ddots & &\\
& & 0 &  \sqrt{2}& 0\\
& & \sqrt{2} &  0 &1\\
&  & 0 &1& 0
\end{array}
\right).
\end{equation}
This corresponds to the $\beta\rightarrow \infty$ limit of the symmetric model matrices $A_{n,\beta}$.
 
We denote the trace of the resolvent $s_\infty:\mathbb{C}_+ \rightarrow \mathbb{C}_+$ of $A_{n,\infty}$ by
\[s_\infty(z) = \frac{1}{n}\operatorname{tr}\left(\frac{1}{\sqrt{n}}A_{n,\infty}-z\right)^{-1}=\frac{1}{n}\sum_{j=1}^n\frac{1}{\lambda_j -z}.\]
The rescaled resolvent converges uniformly on compact subsets of $z\setminus \mathbb{C}$ (see \cite{deift}, p. 159-167):
\[s_n(z)\rightarrow s_{sc}(z),\]
Here $s_{sc}$ is the Stieltjes transform of the semicircle density:
\[s_{sc}(z) =\frac{2}{\pi}\int_{-1}^{1}\frac{\sqrt{1-x^2}}{x-z}\,\mathrm{d}x.\]
As a first step towards the proof of the local semicircle law for $\beta$ ensembles, we establish the following quantitative version of this result, close to the real axis:
\begin{lemma} \label{thm:zerotemp}
Let $0<\delta<1$. There exists a constant $C_{\delta,\epsilon}$ such that, for any $z$ with $1/10 >\mathfrak{I}z> n^{-1+\epsilon}$, and
$ -1+\delta<\Re z <1-\delta$, we have:
\begin{equation}
\left|s_n(z)-s_{sc}(z)\right|\le C_{\delta,\epsilon}n^{-1}.
\end{equation}
\end{lemma}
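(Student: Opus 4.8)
The plan is to exploit the fact that the matrix $A_{n,\infty}/\sqrt n$ is, up to rescaling, the Jacobi matrix associated with the Hermite polynomials, so that its resolvent entries can be written explicitly in terms of Hermite functions. Concretely, the $(j,k)$ entry of $(n^{-1/2}A_{n,\infty}-z)^{-1}$ has a classical representation involving products $E_{j}(\cdot)E_{k}(\cdot)$ (equivalently Christoffel--Darboux kernels) evaluated via the Hermite differential/recurrence relations, and the normalized trace $s_n(z)=\frac1n\sum_j (\lambda_j-z)^{-1}$ is governed by the behavior of $E_n$ and $E_{n+1}$ near the spectral parameter $\sqrt n\,z$. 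So the first step is to record this exact formula for $s_n(z)$ (or for $s_n'(z)$, using that $s_n$ is a rational function with poles at the zeros $\lambda_j$ of $E_n(\sqrt n\,\cdot)$).

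Second, I would invoke Plancherel--Rotach asymptotics for the Hermite functions $E_n$ in the oscillatory (bulk) regime: for $x=\sqrt n\,E$ with $-1+\delta<E<1-\delta$, one has the two-term asymptotic expansion $E_n(x)\sim (\text{amplitude})\cdot\cos(\text{phase})+O(n^{-1})$ uniformly on compact subsets of the bulk, and similarly for $E_{n+1}$, with the phases differing by a controlled amount. These asymptotics are exactly what is referenced in Deift's book (the citation \cite{deift}, pp. 159--167) and, crucially, they come with an error term that is $O(n^{-1})$ relative to the leading term, uniformly on compacts away from the turning points $\pm\sqrt{2n}$ (here rescaled to $\pm 1$). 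Plugging these into the exact resolvent formula from Step 1 yields $s_n(z)=g(z)+O(n^{-1})$ for a limiting function $g$, where the error is controlled uniformly on the region $\Re z\in(-1+\delta,1-\delta)$, $|\Im z|$ bounded, including down to $\Im z>n^{-1+\epsilon}$ — the point being that the asymptotics are in the spectral variable and do not degrade as $\eta\to 0$, as long as we stay in the bulk in the real part. Identifying $g(z)=s_{sc}(z)$ is then done either by noting that $g$ must satisfy the self-consistent/quadratic equation for the semicircle Stieltjes transform (from the three-term recurrence for Hermite polynomials in the limit), or simply by matching with the known weak limit $s_n\to s_{sc}$ on compacts of $\mathbb C_+$ and using analyticity.

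Third, some care is needed near $\Im z=n^{-1+\epsilon}$: there the poles $\lambda_j$ of $s_n$ are spaced at scale $\asymp 1/n$, so $z$ can be within $O(n^{-1+\epsilon})$ of an eigenvalue and individual terms $(\lambda_j-z)^{-1}$ are as large as $n^{1-\epsilon}$. The resolution is that we never estimate $s_n$ term-by-term; the Hermite asymptotics control the \emph{sum} (equivalently the ratio $E_n'/E_n$ or the Christoffel function) uniformly, and the rigidity of the zeros $\lambda_j$ — which are themselves located at the semiclassical points up to $O(n^{-1})$, again a consequence of Plancherel--Rotach — guarantees that the oscillatory cancellation survives. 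I expect the main obstacle to be precisely this: extracting the $O(n^{-1})$ rate (as opposed to merely $o(1)$) uniformly all the way down to $\Im z\asymp n^{-1+\epsilon}$, which requires the error terms in the Hermite asymptotics to be genuinely uniform on compact subsets of the bulk and to be differentiated/summed without loss. Once that uniform two-term Plancherel--Rotach estimate is in hand, assembling it into the bound $|s_n(z)-s_{sc}(z)|\le C_{\delta,\epsilon}n^{-1}$ is a direct computation.
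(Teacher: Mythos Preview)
Your overall strategy --- write $s_n(z)$ as the logarithmic derivative of $H_n$ at $\sqrt{2n}\,z$ (equivalently, the ratio $H_{n-1}/H_n$) and feed in sharp asymptotics --- is exactly what the paper does. But you have misidentified both the type of asymptotics required and where the actual work sits.

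You invoke the real-variable Plancherel--Rotach expansion in the oscillatory regime, with amplitude times $\cos(\text{phase})$, and then speak of ``oscillatory cancellation'' and ``rigidity of the zeros'' to survive the passage to small $\Im z$. This is not what is needed, and it would not close. The quantity $s_n(z)=2^{3/2}n^{1/2}\,H_{n-1}(\sqrt{2n}z)/H_n(\sqrt{2n}z)$ must be evaluated at a genuinely complex argument; real-line expansions of $E_n(x)$ do not directly control this ratio off the axis, and no summation-over-eigenvalues argument enters at all. The paper instead uses the \emph{complex-plane} asymptotics from the Riemann--Hilbert analysis of Deift et al.\ \cite{dkmvz}: for $z$ in the bulk strip one has
\[
\frac{H_n(\sqrt{2n}z)}{2^{3n/2}n^{n/2}}e^{-ng(z)} = (\text{main term}) + e^{-n\varphi(z)}\cdot(\text{bounded}) + O(1/n),
\]
and similarly for $H_{n-1}$, with $\varphi(z)=-iz(1-z)^{1/2}(1+z)^{1/2}-2i\arcsin z+i\pi$. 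The ratio of the main terms is exactly $s_{sc}(z)$ up to the $O(1/n)$ error. The entire content of the lemma, and the only nontrivial step, is to show that the $e^{-n\varphi(z)}$ correction is negligible uniformly down to $\Im z>n^{-1+\epsilon}$. This amounts to proving the lower bound
\[
\Re\varphi(z)\ \ge\ c_\delta\,\Im z
\]
for $-1+\delta<\Re z<1-\delta$ and small $\Im z>0$, so that $|e^{-n\varphi(z)}|\le e^{-c_\delta n^{\epsilon}}$. The paper spends most of the proof on this estimate, expanding $\Re\varphi$ and $\Im\arcsin z$ carefully. Your proposal does not isolate this step; the mechanisms you describe (zero rigidity, cancellation in sums) play no role here.
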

The proof makes use of the ``Plancherel-Rotach'' asymptotics for Hermite polynomials in the complex plane obtained by Deift et al. \cite{dkmvz} using the Riemann-Hilbert approach. Their methods yield uniform error bounds which imply our result.

\subsection{Proof of Lemma \ref{thm:zerotemp}}
The eigenvalues $\lambda_j$, $1\le j \le n$ of $A_{n,\infty}$ are given by the $n$ distinct, real zeros of the $n$-th Hermite polynomial $H_n$, rescaled by $\sqrt{2n}$, cf. \cite{D}, \cite{ES}.
For any $z$, the resolvent $s(z)$ can be written as 
\begin{align*}
s_n(z) &= \frac{1}{n}\sum^n_{j=1} \frac{1}{\lambda_j -z} \\
&= -\frac{\left(H_n(\sqrt{2n}z)\right)'}{H_n(\sqrt{2n}z)} = -\sqrt{2n}\cdot \frac{\left(H_n\right)'(\sqrt{2n}z)}{H_n(\sqrt{2n}z)},
\end{align*}
where the prime denotes differentiation. The derivatives of Hermite polynomials satisfy the identity:
\[H_n'(z)= 2nH_{n-1}(z).\]
Hence, the trace of the normalized resolvent of $A_{n,\infty}$ has the expression:
\begin{equation}
s_n(z) = 2^{3/2}n^{1/2}\cdot \frac{H_{n-1}(\sqrt{2n}z)}{H_n(\sqrt{2n}z)}.
\end{equation}
We will check that the final expression is close to the Stieltjes transform of the semicircle density.

In \cite{dkmvz}, the authors derive asymptotics for general orthogonal polynomials with respect to even polynomial weights by means of a rescaled Riemann-Hilbert problem. For $z$ as in the statement of Lemma \ref{thm:zerotemp}, formula (8.32) and Theorem 7.10, and the remarks in Appendix B. in \cite{dkmvz} imply:
\begin{multline*}
\frac{H_n(\sqrt{2n}z)}{2^{3n/2}n^{n/2}}e^{-ng(z)} = \frac{1}{2}\left(\frac{(z-1)^{1/4}}{(z+1)^{1/4}}+\frac{(z+1)^{1/4}}{(z-1)^{1/4}}\right)\\-\frac{e^{-n\varphi(z)}}{2i}\left(\frac{(z-1)^{1/4}}{(z+1)^{1/4}}-\frac{(z+1)^{1/4}}{(z-1)^{1/4}}\right)+O(1/n),
\end{multline*}
with an error term uniform in the region of interest.
The function $g$ is the logarithmic potential of the equilibrium measure associated to the Hermite polynomial:
\[g(z)  = \frac{2}{\pi}\int_{-1}^1 \sqrt{1-x^2}\log(z-x)\,\mathrm{d} x,\]
defined for $z\in \mathbb{C}\setminus (-\infty, 1]$. 
The function $\varphi$ is given by:
\[\varphi(z) = -iz(1-z)^{1/2}(1+z)^{1/2}-2i\arcsin z + i \pi.\]
Here the square roots are principal branches, e.g.:
\[(1-z)^{1/2} =\exp((1/2)\log|1-z|+i(1/2)\arg(1+z)) \]
where $\arg \in (-\pi,\pi)$. The function $\arcsin$ is the inverse of the conformal mapping
\[\sin w : \left\{|\Re w|<\frac{\pi}{2} \right\} \rightarrow \mathbb{C}\setminus (-\infty, -1]\cup[1,\infty). \]
Note that $\sin w$ maps $\{\Im  w > 0\}$ one-to-one and onto the upper half-plane $\{\Im  z >0\}$.

The Riemann-Hilbert problem also provides us with asymptotics for the rescaled $n-1$st polynomial. Indeed, computing the second row of the solution to the Riemann-Hilbert problem and using Stirling's approximation, one finds (cf. (3.9), (5.54) and (8.32) in \cite{dkmvz}):
\begin{multline*}
\frac{H_{n-1}(\sqrt{2n}z)}{2^{(3n-1)/2}n^{(n-1)/2}} e^{-ng(z)}= \frac{1}{2}\left(-\frac{(z-1)^{1/4}}{(z+1)^{1/4}}+\frac{(z+1)^{1/4}}{(z-1)^{1/4}}\right)+\\ \frac{e^{-n\varphi(z)}}{2}\left(\frac{(z-1)^{1/4}}{(z+1)^{1/4}}+\frac{(z+1)^{1/4}}{(z-1)^{1/4}}\right)+O(1/n).
\end{multline*}
The error term is $O(1/n)$ by the uniform boundedness of $(z+1)^{-1/4}$ and $(z-1)^{-1/4}$ for the values of $z$ that concern us.

We now argue that the factor $e^{-n\varphi(z)}$ is rapidly vanishing in $n$.
We have:
\begin{multline}\label{eq: rphi}
\Re  \varphi(z) = \Re z \cdot |1+z|^{1/2}|1-z|^{1/2}\sin\left((1/2)(\arg(1+z)+\arg(1-z))\right)  \\
+ \Im z \cdot |1+z|^{1/2}|1-z|^{1/2}\cos\left((1/2)(\arg(1+z)+\arg(1-z))\right)+ 2\Im \arcsin z.
\end{multline}
The last two terms are always positive. For $\Im  z>0$, $-1+\delta<\Re z <1-\delta$: 
\begin{align*}\arg(1-z) &\in (-\pi/2,0),\\
\arg(1+z) &\in (0,\pi/2).
\end{align*}
Note also that $\Re  \varphi > 0 $ for $\Re z<0$. We will need a lower bound for the real part in the region where $\Re z$ is positive. For such $z$,
\[|z-1|^{1/2}|1+z|^{1/2} \le (1-\Re z)^{1/2}(1+\Re z)^{1/2}\left(1+(\Im  z)^2\right)^{1/2}.\]
On the other hand, since the argument of the sine is negative for $0 < \Re  z <1-\delta$:
\begin{align*}
\sin(\arg(1+z)+\arg(1-z)) &\ge \frac{1}{2}\arctan\left(\frac{\Im  z}{1+\Re  z}\right)-\frac{1}{2}\arctan\left(\frac{\Im  z}{1-\Re  z}\right)\\
&\ge \frac{-\Re  z \cdot \Im  z}{(1+\Re  z)(1-\Re  z)}.
\end{align*}

We compare this to a lower bound for $\Im w=\Im \arcsin z$. By our definition of $\arcsin$:
\begin{align}
\label{eq: iz} \Im  z &=  \frac{e^{\Im  w}-e^{-\Im  w}}{2}\cdot \cos \Re  w,\\
\label{eq: rz} \Re  z &= \frac{e^{\Im  w}+e^{-\Im  w}}{2} \cdot \sin \Re  w.
\end{align}
For $\Re z>0$, this implies $\cos \Re  w, \ \sin \Re  w >0$.
Using (\ref{eq: iz}), (\ref{eq: rz}), we find:
\begin{align*}
\sin \Re  w &\le \Re z\\
\cos \Re w &\ge (1-\Re z)^{1/2}(1+\Re z)^{1/2}
\end{align*}
Using (\ref{eq: iz}) again, this implies
\[\Im w \le (2\delta)^{-1/2} \Im z.\]
Without loss of generality, we may assume $\Im z < \delta/100$.
Now using $e^x=1+e^cx$ for $0<x<c$, we have:
\begin{align*}
2 \le e^{\Im  w}+e^{-\Im  w} &\le 2 + e^{1/10} \Im  w, \\
\Re  z\cdot(1- (e^{1/10}/2)\cdot\Im  w) &\le \sin \Re  w,\\
2\Im w \le e^{\Im  w}-e^{-\Im  w} &\le 2\Im  w +(\Im  w)^2,\\
\cos \Re  w  & \le \left(1-(\Re  z)^2)\right)^{1/2} + 10\Im w\cdot|2\Re  z -e^{2\delta/10}\Im  w|.
\end{align*}
Hence we have:
\[ (1-\Re  z)^{1/2}(1+\Re  z)^{1/2} \Im  w \ge \Im  z +O(\Im w)^{3/2}.\]
Here $O$ stands for some terms of higher order in $\Im  w$ multiplying small constants depending on $\delta$.

Putting all the above together, we find:
\begin{equation*}
\Re  \varphi (z) = \frac{(2-(\Re  z)^2)\cdot \Im  z}{(1-\Re  z)^{1/2}(1+\Re  z)^{1/2}} + O(\Im  z)^{3/2}.
\end{equation*}
Recalling that $\Im  z < \delta/100$, we find
\begin{equation*}
\Re  (n \varphi(z)) > \frac{1}{2\delta^{1/2}} n^\epsilon,
\end{equation*}
uniformly in $\Im  z > n^{-1+\epsilon}$.

The factors 
\[\frac{(z-1)^{1/4}}{(z+1)^{1/4}}\pm\frac{(z+1)^{1/4}}{(z-1)^{1/4}}\]
 are bounded uniformly in the region specified in the theorem, so the above gives  a sub-exponential decay rate for the factors multiplying $e^{-n\varphi(z)}$.
 
Using the approximations above, a calculation shows that, uniformly in $1/10 > \Im  z > n^{-1+\epsilon}$:
 \begin{equation*}s_n(z) = 2 \cdot \left(-z+(z-1)^{1/2}(z+1)^{1/2}\right)+O(1/n).\end{equation*}
We once again take the principal determinations of the square roots. It is readily verified that the boundary values of the imaginary part of the first term on the right for $\mathfrak{I} z \rightarrow 0^+$ are given by $\pi$ times the semicircle density. Since at infinity we have
\[(z-1)^{1/2}(z+1)^{1/2} = z +o(1), \quad z\rightarrow \infty\]
for our choice of the square roots, it follows that the analytic function defined by the expression on the right above is equal to $s_{sc}$.

\section{Semicircle law for Gaussian $\beta$-ensemble at level $n^{1/2+\epsilon}$}
\label{sec:onehalf}
In this section, our goal is to prove the following semicircle law at a suboptimal level of $n^{-1/2+\epsilon}$ for any $\epsilon > 0$:

\begin{proposition} \label{prop:1/2}
Let $s_n(z)$ be the Stieltjes transform of the measure induced by the eigenvalues of the normalized Gaussian $\beta$-ensemble, $\frac{1}{\sqrt{n}}A_{n,\beta}$.  Let $s_{sc}(z)$ be the Stieltjes transform of the semicircle law.  For any $\tau > 0$, there exists a constant $C(\epsilon,\beta)$ independent of $n$ such that, with overwhelming probability, we have:
\begin{equation}
\sup_{z\in D_{\epsilon,\delta}}|s(z)-s_{sc}(z)| \leq C(\epsilon,\beta) n^{-\epsilon/100},
\end{equation}
where for $\epsilon>0$ and $0< \delta <1$ the domain $D$ is defined as
\begin{displaymath}
D_{\epsilon,\delta} := \{z: \delta >\Im z > n^{-1/2+\epsilon}, -1+\delta < \Re z < 1-\delta\}.
\end{displaymath}
\end{proposition}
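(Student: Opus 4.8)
The plan is to decompose the model matrix as a deterministic Jacobi matrix plus a small fluctuation, expand the resolvent around the deterministic part, and estimate each term of the expansion. Put $M=n^{-1/2}A_{n,\beta}$, $M_0=n^{-1/2}\mathbb{E}[A_{n,\beta}]$, $\mathcal{E}=M-M_0$, and write $G=G(z)=(M-z)^{-1}$, $G_0=G_0(z)=(M_0-z)^{-1}$ for $z\in D_{\epsilon,\delta}$. Then $\mathcal{E}$ is real symmetric and tridiagonal, with independent entries up to symmetry: diagonal entries distributed as $n^{-1/2}(2\beta)^{-1/2}N(0,2)$ and off-diagonal entries as $n^{-1/2}(2\beta)^{-1/2}(\chi_{(n-j)\beta}-\mathbb{E}\chi_{(n-j)\beta})$, so each entry has variance of order $1/n$. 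Since $\chi_m$ concentrates around $\mathbb{E}\chi_m=\sqrt m(1+O(1/m))$ with Gaussian-type tails, all entries of $\mathcal{E}$ are simultaneously $O(n^{-1/2}\log n)$ with overwhelming probability, whence by Gershgorin $\|\mathcal{E}\|\le n^{-1/2}\log n$ with overwhelming probability. As $\|G_0\|\le(\Im z)^{-1}\le n^{1/2-\epsilon}$ on $D_{\epsilon,\delta}$, on the same event $\kappa:=\|G_0\mathcal{E}\|=\|\mathcal{E}G_0\|\le n^{-\epsilon/2}<1$ — the two operator norms agree because $M_0$ is real symmetric, so $G_0^T=G_0$ and $(\mathcal{E}G_0)^T=G_0\mathcal{E}$.

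The deterministic part is controlled by Lemma \ref{thm:zerotemp}: since $\mathbb{E}\chi_m=\sqrt m(1+O(1/m))$, the matrix $M_0$ differs from $n^{-1/2}A_{n,\infty}$ by a tridiagonal matrix the sum of whose absolute entries is $O(\beta^{-1})$, hence of trace norm $O(\beta^{-1})$, so the resolvent identity gives $|\tfrac1n\operatorname{tr}G_0(z)-\tfrac1n\operatorname{tr}(n^{-1/2}A_{n,\infty}-z)^{-1}|\le C\beta^{-1}n^{-1}(\Im z)^{-2}=O_\beta(n^{-2\epsilon})$, and combined with Lemma \ref{thm:zerotemp} applied to $A_{n,\infty}$,
\begin{equation*}
\Big|\tfrac1n\operatorname{tr}G_0(z)-s_{sc}(z)\Big|=O_\beta(n^{-2\epsilon})\qquad\text{uniformly on }D_{\epsilon,\delta}.
\end{equation*}
The identities $\|G_0\|_{HS}^2=(\Im z)^{-1}\Im\operatorname{tr}G_0$ and $\|G_0^2\|_{HS}^2\le(\Im z)^{-2}\|G_0\|_{HS}^2$, together with the display above and the entrywise bounds on $G_0$ from Section \ref{sec:resolventest}, give $\|G_0\|_{HS}^2=O(n/\Im z)$ and $\|G_0^2\|_{HS}^2=O(n(\Im z)^{-3})$, while $\|\mathcal{E}\|_{HS}=O_\beta(1)$ with overwhelming probability.

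On the overwhelming-probability event above, iterating $G=G_0-G_0\mathcal{E}G$ gives, for any fixed integer $m$,
\begin{equation*}
\tfrac1n\operatorname{tr}G(z)=\tfrac1n\operatorname{tr}G_0(z)+\sum_{k=1}^{m-1}(-1)^kT_k(z)+R_m(z),\qquad T_k:=\tfrac1n\operatorname{tr}\big((G_0\mathcal{E})^kG_0\big).
\end{equation*}
By $|\operatorname{tr}(AB)|\le\|A\|_{HS}\|B\|_{HS}$ and the bounds above, $|R_m|\le\kappa^{m-2}\,O_\beta(\log n)\,n^{-1}(\Im z)^{-3}\le O_\beta(\log n)\,n^{1/2-3\epsilon-(m-2)\epsilon/2}$, smaller than any fixed power of $n$ once $m=m(\epsilon)$ is large. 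For the finitely many $T_k$, the crude bound $|T_k|\le\kappa^{k-1}\|G_0\|_{HS}\|\mathcal{E}\|_{HS}/n=O(\kappa^{k-1}n^{-1/2}(\Im z)^{-3/2})$ in fact diverges as $\Im z\downarrow n^{-1/2+\epsilon}$ for $k$ up to $\sim 1/(4\epsilon)$, so for those $k$ one must exploit cancellation by centering. Write $T_k=\mathbb{E}[T_k]+(T_k-\mathbb{E}[T_k])$. A Wick/cumulant expansion (the diagonal entries of $\mathcal{E}$ are Gaussian, the off-diagonal ones centered with $g$-th cumulant $O(n^{-g/2})$) writes $\mathbb{E}[T_k]$ as a finite sum over partitions of the $k$ factors into blocks of size $\ge 2$, each term carrying at least $k/2$ variance factors of size $O(n^{-1})$ against a trace of at most $k/2+1$ resolvents bounded by $n(\Im z)^{-k/2-1}$, whence $|\mathbb{E}[T_k]|\le C_k n^{-k/2}(\Im z)^{-k/2-1}$, which vanishes for $k=1$ and is $o(n^{-\epsilon/100})$ for $k\ge 2$ on $D_{\epsilon,\delta}$. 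The fluctuation $T_k-\mathbb{E}[T_k]$ is a centered polynomial of degree $\le k$ in the independent (sub-Gaussian, and in the $O(1)$ corner entries merely sub-exponential) entries of $\mathcal{E}$ whose variance, by the Hilbert–Schmidt and entrywise estimates above, is a fixed negative power of $n$ (for instance $\operatorname{Var}T_1\le Cn^{-3}\|G_0^2\|_{HS}^2=O(n^{-2}(\Im z)^{-3})$); polynomial concentration — equivalently, regarding $\mathcal{E}$ as a controllably Lipschitz function of the underlying Gaussian family and using concentration of measure — then yields $|T_k-\mathbb{E}[T_k]|\le n^{-\epsilon/100}$ with overwhelming probability. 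A union bound over a net in $D_{\epsilon,\delta}$ of spacing $n^{-C}$, on which $s_n$, $\tfrac1n\operatorname{tr}G_0$ and $s_{sc}$ are Lipschitz with constant $O(n^{1-2\epsilon})$, upgrades this to the uniform conclusion.

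The main obstacle is this last step: the error terms $T_k$ cannot be discarded by operator-norm bounds (these diverge at the lower edge $\Im z\sim n^{-1/2+\epsilon}$ for small $k$), so one genuinely needs the probabilistic cancellation obtained by centering each $T_k$, hence variance estimates for these degree-$k$ polynomials in the tridiagonal fluctuations that are sharp enough at the scale $\Im z\gtrsim n^{-1/2+\epsilon}$ — this is where the entrywise resolvent bounds of Section \ref{sec:resolventest} and the tridiagonal structure are used — together with a concentration inequality insensitive to the non-Gaussian (chi-distributed) off-diagonal entries. Everything is comfortable precisely at that scale, and the failure of these estimates below it is what necessitates the separate inductive argument of Section \ref{sec:one}.
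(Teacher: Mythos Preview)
Your overall framework---decompose into deterministic Jacobi matrix plus fluctuation, expand the resolvent, control the remainder by operator/HS norms for large $m$---is exactly what the paper does. The divergence is in how you handle the finitely many dangerous terms $T_k$ for $k\lesssim 1/\epsilon$.

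The paper does \emph{not} center. It bounds each $T_k$ \emph{deterministically} on the high-probability event $\max|\Delta_{ij}|\le n^{-1/2+c}$, by writing
\[
T_k=\frac1n\sum_{i_1,\ldots,i_k}R^\infty_{i_1i_2}\Delta_{i_2i_2'}R^\infty_{i_2'i_3}\cdots R^\infty_{i_k'i_1}
\]
and estimating the index sum via the entrywise bounds of Proposition~\ref{prop:R}. The combinatorics (classifying each step as ``exploratory'' $i_l\ne i_{l+1}$ or ``stationary'' $i_l=i_{l+1}$, and using \eqref{eqn:Roffdiag} for the former, Lemma~\ref{lem:Rdiag} for runs of the latter) yields $|T_k|\le (Cn^{-\epsilon/48})^k$ outright---no expectation, no concentration.

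Your route (center, Wick-expand $\mathbb{E}[T_k]$, then concentrate the fluctuation) has a genuine gap. The heuristic ``each pairing leaves a trace of at most $k/2+1$ resolvents bounded by $n(\Im z)^{-k/2-1}$'' is wrong: pairing the $\mathcal{E}$'s identifies \emph{indices}, it does not remove any of the $k+1$ factors of $G_0$. What makes this kind of bound work for Wigner matrices is that the diagonal entries of the reference resolvent are $O(1)$, so the Ward identity gives $\sum_j|(G_0)_{ij}|^2=(\Im z)^{-1}\Im(G_0)_{ii}=O((\Im z)^{-1})$ uniformly in $i$, and one collects one power of $(\Im z)^{-1}$ per pair. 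Here that fails completely: by Proposition~\ref{prop:R} the diagonal entries $(G_0)_{ii}$ range up to $n^{1/2-\epsilon}$, so the per-column $\ell^2$ bound is not uniform and your displayed estimate on $\mathbb{E}[T_k]$ does not follow. To repair it you would have to feed in precisely the decay in $|\sqrt i-\sqrt n E|$ from \eqref{eqn:Rdiag}--\eqref{eqn:Roffdiag} and redo the path-counting---i.e.\ the paper's argument---inside both the mean and the variance computations. That is strictly more work than the paper's single deterministic pass, and your concentration step (degree-$k$ polynomials in a mixture of Gaussian and chi variables) is itself only sketched.

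In short: same expansion, but the paper's deterministic entrywise estimate on each $T_k$ is the actual mechanism, and your probabilistic shortcut relies on a Wigner-type bound that is false for this tridiagonal resolvent.
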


The idea of the proof is to expand the Green's function around the zero temperature case and estimate the differences between the two.  Using the asymmetric tridiagonal model (\ref{eqn:mat2}), we can write 
\begin{equation}
\frac{1}{\sqrt{n}}\tilde{A}_{n,\beta} = \frac{1}{\sqrt{n}}\tilde{A}_{n,\infty} + \Delta
\end{equation}
where $\Delta= \Delta(n,\beta)$ is a bidiagonal matrix whose entries are independent $N(0,\frac{1}{\sqrt{n\beta}})$ variables on the main diagonal and independent random variables with distributions $\frac{1}{2n\beta}\chi^2_{(n-k)\beta}-\mathbb{E}(\frac{1}{2n\beta}\chi^2_{(n-k)\beta})$ on the main sub-diagonal.

Using the resolvent expansion, we can write
\begin{equation}
\label{eq: resolventexpansion}
\tilde{R}^\beta(z) = \tilde{R}^\infty(z) + \sum_{p=1}^m (\tilde{R}^\infty(z)(-\Delta))^p \tilde{R}^\infty(z) + (\tilde{R}^\infty(z)(-\Delta))^{m+1} \tilde{R}^\beta(z).
\end{equation}
Here, $\tilde{R}^\beta(z) = (\frac{1}{\sqrt{n}}\tilde{A}_{n,\beta} - zI)^{-1}$ is the resolvent matrix for the rescaled asymmetric model (the superscript is not to be confused with taking powers) and we suppress the subscript $n$ when no confusion is likely.  We shall also suppress $z$ in future equations when the dependence on $z$ is understood.  Taking traces on both sides of the previous equation and normalizing by $n^{-1}$, we have
\begin{equation} \label{eq: traceexpansion}
s_\beta(z) = s_\infty(z) + n^{-1}\left(\sum_{p=1}^m\operatorname{tr}((\tilde{R}^\infty(-\Delta))^p\tilde{R}^\infty)\right)+n^{-1}\operatorname{tr}\left((\tilde{R}^\infty(-\Delta))^{m+1} \tilde{R}^\beta\right). 
\end{equation}
The proof of Proposition \ref{prop:1/2} depends on the following estimates on the elements of the resolvent of the symmetric matrix (\ref{eqn:matzero}), $R^\infty$:
\begin{proposition} \label{prop:R}
Let $\epsilon' = \epsilon/8$. For each $1 \le k \le n$ and $\Im z > n^{-1/2+\epsilon}$, we have the estimate
\begin{equation} \label{eqn:Rdiag}
\displaystyle{|R^\infty_{kk}| \leq \min \{ C n^{1/2}(\log n)(1+|\sqrt{k}-\sqrt{n}\Re z|)^{-1}, n^{1/2-\epsilon} \}}.
\end{equation}
For $k$ such that $|\sqrt{k}-E\sqrt{n}|<n^\eta$ with $0<\eta<1/2$, we have
\begin{equation} \label{eqn:Rdiagbigk}
|R_{kk}^\infty|\le C_{E,\eta}n^{1/4-\epsilon'}.
\end{equation}
For $k\neq l$ and $\Im z > n^{-1/2+\epsilon}$
\begin{equation} \label{eqn:Roffdiag}
\displaystyle{|R^\infty_{kl}| \leq n^{1/2-\epsilon'}{k}^{-1/4}{l}^{-1/4} |\sqrt{k}-\sqrt{l}|^{-1}}.
\end{equation}
\end{proposition}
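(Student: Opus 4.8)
The plan is to reduce all three estimates to the Plancherel--Rotach asymptotics for Hermite polynomials --- the same input already used in the proof of Lemma \ref{thm:zerotemp} --- by means of an exact formula for the entries of the resolvent of a tridiagonal matrix. After the index reversal $k\mapsto n+1-k$, $A_{n,\infty}$ becomes the $n\times n$ Hermite Jacobi matrix with rows and columns in reverse order, so that its trailing principal sub--blocks are (reversals of) lower--order Hermite Jacobi matrices and its leading ones are associated--Hermite Jacobi matrices; their characteristic polynomials are accordingly Hermite polynomials, respectively associated Hermite polynomials. The standard formula for the Green's function of a finite Jacobi matrix then gives
\[
R^\infty_{kl}(z)=\frac{\psi_{\min(k,l)}(z)\,\varphi_{\max(k,l)}(z)}{W(z)},
\]
where $\varphi$ solves the Hermite three--term recurrence with the boundary condition at the ``Hermite end'' --- up to a scalar, $\varphi_j$ is $H_{n-j}(\sqrt{2n}\,z)$ --- $\psi$ is the solution subject to the boundary condition at the other end, a fixed combination $aH_m+bG_m$ of first-- and second--kind Hermite solutions forced to vanish there, and $W$ is the index--independent Wronskian, proportional to $H_n(\sqrt{2n}\,z)$. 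Equivalently, Cramer's rule writes $R^\infty_{kl}$ directly as a ratio of characteristic polynomials of the two complementary principal sub--blocks. Exactly as for $s_n$ in Lemma \ref{thm:zerotemp}, the exponential factors $e^{\pm n g(\cdot)}$ carried by the individual Hermite asymptotics cancel in the products and ratios forming $R^\infty_{kl}$, leaving quantities polynomial in $n$ --- as they must be, since $\|R^\infty\|\le(\Im z)^{-1}$.

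One then inserts the uniform asymptotics of \cite{dkmvz} at each intermediate degree $m$. Writing $\xi=\sqrt{2n}\,z$, there are three regimes relative to the local band $[-\sqrt{2m},\sqrt{2m}]$: an oscillatory regime, where the normalised Hermite solution has amplitude of order $m^{-1/4}(1-(\Re\xi)^2/(2m))^{-1/4}$, the last factor measuring the distance to the local band edge; a turning--point (Airy) regime $\Re\xi\approx\pm\sqrt{2m}$, where the profile is the uniformly bounded Airy function; and an exponential regime $|\Re\xi|>\sqrt{2m}$, where the $H_m$--type solution is small relative to the Wronskian while the second--kind solution grows, their product being controlled by the fact that $W\propto H_n$ has no zeros off the real axis. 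The geometric point is that at Hermite degree $m$ the passage between the oscillatory and exponential regimes occurs exactly where $\sqrt m=\sqrt n\,|\Re z|$; in the index convention of the Proposition this reads $|\sqrt k-\sqrt n\,\Re z|$, which is the origin of the factor $(1+|\sqrt k-\sqrt n\,\Re z|)^{-1}$ in (\ref{eqn:Rdiag}).

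It remains to assemble the bounds, which is conveniently phrased through the spectral measure $\mu_k$ at site $k$, whose density is essentially the squared Hermite function of degree $n-k$ against the weight. For (\ref{eqn:Rdiag}): $|R^\infty_{kk}|\le\int|t-z|^{-1}\,d\mu_k(t)$, which in the bulk of the local band is of order $\rho_k(\Re z)\log(1/\Im z)$ and outside it of order $\mathrm{dist}(z,\operatorname{supp}\mu_k)^{-1}$; the Plancherel--Rotach amplitude bounds $\rho_k$ (and the distance) by $n^{1/2}(1+|\sqrt k-\sqrt n\,\Re z|)^{-1}$, the $n^{1/2}$ being the rescaling constant $\sqrt{2n}$ and $\log(1/\Im z)\le\log n$ the stated logarithm; one finally takes the minimum with the trivial bound $|R^\infty_{kk}|\le(\Im z)^{-1}<n^{1/2-\epsilon}$, valid since $\Im z>n^{-1/2+\epsilon}$. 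For (\ref{eqn:Rdiagbigk}), when $|\sqrt k-E\sqrt n|<n^{\eta}$ the argument $\xi$ lies within $O(n^{\eta})$ of the turning point $\pm\sqrt{2(n-k)}$, where the Airy asymptotics make the weights of $\mu_k$ near the edge small (of order $n^{-3/4}$ per atom); since $\Im z>n^{-1/2+\epsilon}$ forces $|t-z|\ge\Im z$ for every atom $t$, summing gives $|R^\infty_{kk}|=O(n^{-1/2}(\Im z)^{-1/2})\le C_{E,\eta}\,n^{1/4-\epsilon'}$, the gain of a square root over the generic $(\Im z)^{-1}$ reflecting the square--root behaviour of the local density at a band edge; the slack $\epsilon'=\epsilon/8$ covers logarithmic and constant losses. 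For (\ref{eqn:Roffdiag}), with $k\ne l$ the two solutions in $R^\infty_{kl}=\psi_{\min}\varphi_{\max}/W$ contribute bulk amplitudes of order $k^{-1/4}$ and $l^{-1/4}$, while the decay of the resolvent with the index separation --- exponential at rate $\sim\Im z$ per step for $z$ off the real axis, hence, via $e^{-x}\le1/x$ and $\Im z>n^{-1/2+\epsilon}$, bounded by a power of the separation tracked against the differing turning points $\sqrt k,\sqrt l$ --- supplies the factor $|\sqrt k-\sqrt l|^{-1}$, with $n^{-\epsilon'}$ left over as slack from $\Re z\in(-1+\delta,1-\delta)$; note $|\sqrt k-\sqrt l|^{-1}=(\sqrt k+\sqrt l)/|k-l|$, the expected bulk off--diagonal decay $\sqrt n/|k-l|$ for $k,l\sim n$.

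The main obstacle is not conceptual but the uniformity: making the \cite{dkmvz} asymptotics hold with a controlled error simultaneously over the whole degree range $0\le m\le n$, matching the oscillatory, Airy, and exponential regimes correctly, and tracking the joint dependence on $|\sqrt k-\sqrt n\,\Re z|$ and $\Im z$ precisely enough that the powers of $n$ land on the stated values. A secondary point requiring care is the second--kind solution $\psi$: lacking the Gaussian decay of a Hermite function, its size in the exponential regime must be estimated separately, which is where one uses that $W\propto H_n(\sqrt{2n}\,z)$ is non--vanishing in the upper half--plane.
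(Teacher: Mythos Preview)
For the diagonal estimates your route is essentially the paper's: both write $R_{kk}=\sum_m u_m(k)^2/(\bar\lambda_m-z)$ with $u_m(k)^2\propto E_k(\lambda_m)^2$, insert the real-variable Plancherel--Rotach bound $|E_k(x)|\le C\min\{k^{-1/4}(1-x^2/k)^{-1/4},\,k^{-1/12}\}$, and perform a dyadic decomposition in $|\bar\lambda_m-E|$. Phrasing this as ``the spectral measure $\mu_k$ at site $k$'' changes nothing of substance.

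For the off-diagonal estimate (\ref{eqn:Roffdiag}) your product-formula approach $R_{kl}=\psi_{\min}\varphi_{\max}/W$ is genuinely different from the paper, but the sketch has a real gap at exactly the point that matters: the factor $|\sqrt k-\sqrt l|^{-1}$. Your proposed mechanism --- Combes--Thomas-type decay $e^{-c\,\Im z\,|k-l|}$ followed by $e^{-x}\le 1/x$ --- produces at best $(\Im z\,|k-l|)^{-1}$, which gives nothing in the large index window $|k-l|\ll(\Im z)^{-1}$ (of size up to $n^{1/2-\epsilon}$), and there is no evident link between ``the differing turning points $\sqrt k,\sqrt l$'' of the continuous-variable Hermite functions and the discrete recurrence solutions $\psi_j,\varphi_j$ at fixed complex $z$ that would convert this into $|\sqrt k-\sqrt l|^{-1}$. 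The amplitude factors $k^{-1/4},l^{-1/4}$ are likewise features of the $L^2(\mathbb{R})$-normalised $E_k$, not of $\psi,\varphi$. The second-kind solution $\psi$ you flag as ``secondary'' would also need its own asymptotic analysis, not supplied by the results quoted from \cite{dkmvz}.

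The paper obtains $|\sqrt k-\sqrt l|^{-1}$ by a completely different, cancellation-based argument. It writes $R_{kl}=\sum_m u_m(k)u_m(l)/(\bar\lambda_m-z)$, applies Abel summation in $m$, and proves (Lemmas~\ref{lem:keybulk} and~\ref{lem:keytrans}) that the partial sums $\sum_{t\ge m}E_k(\lambda_t)E_l(\lambda_t)/(nE_{n-1}(\lambda_t)^2)$ are uniformly $O(k^{-1/4}l^{-1/4}|\sqrt k-\sqrt l|^{-1})$. The reason is that, by the real-variable oscillatory asymptotics, $E_k(\lambda_t)E_l(\lambda_t)$ is approximately $k^{-1/4}l^{-1/4}$ times a cosine whose argument is linear in $t$ with slope proportional to $(\sqrt k-\sqrt l)/\sqrt n$; summing period by period and controlling the Taylor-expansion errors yields the factor $|\sqrt k-\sqrt l|^{-1}$. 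The remaining $n^{1/2-\epsilon'}$ comes from $\sum_m\big|(\bar\lambda_m-z)^{-1}-(\bar\lambda_{m+1}-z)^{-1}\big|$ via a separate dyadic decomposition around $\Re z$. None of this oscillation structure is visible in your sketch, and it is precisely this structure that the specific form of (\ref{eqn:Roffdiag}) encodes.
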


We will first show how to prove Proposition \ref{prop:1/2} from Proposition \ref{prop:R} and then proceed to prove Proposition \ref{prop:R} in the next section.

\begin{proof}[Proof of Proposition \ref{prop:1/2}]
Let us first establish a simple lemma
\begin{lemma}\label{lem:Rdiag}
\begin{equation}
\sum_{k=1}^n |R^\infty_{kk}|^m \leq C n^{\frac{m}{2} - m\epsilon'}
\end{equation}
for $m \geq 3$.
\end{lemma}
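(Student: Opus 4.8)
The plan is to split the sum at the point where the two regimes in~\eqref{eqn:Rdiag} cross over and estimate each piece separately. Write $E=\Re z$. The crossover in the bound $|R^\infty_{kk}|\le \min\{Cn^{1/2}(\log n)(1+|\sqrt k-\sqrt n\,E|)^{-1},\ n^{1/2-\epsilon}\}$ occurs when $1+|\sqrt k-\sqrt n\,E|\approx n^{\epsilon}\log n$, i.e. for $k$ with $|\sqrt k-\sqrt n\,E|\lesssim n^\epsilon\log n$ the relevant bound is the flat one $n^{1/2-\epsilon}$, and outside that window the relevant bound is the decaying one $Cn^{1/2}(\log n)|\sqrt k-\sqrt n\,E|^{-1}$.

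First I would handle the ``near'' window $W=\{k:\ |\sqrt k-\sqrt n\,E|\le n^\epsilon\log n\}$. Since $\sqrt k$ ranges over a set of spacing $\gtrsim n^{-1/2}$ near $\sqrt n\,E$ (because $\sqrt{k+1}-\sqrt k\asymp k^{-1/2}\asymp n^{-1/2}$ in the bulk where $E$ is bounded away from $\pm1$), the number of integers $k$ in $W$ is $O(n^{1/2}\cdot n^\epsilon\log n)=O(n^{1/2+\epsilon}\log n)$. On this window use the flat bound $|R^\infty_{kk}|\le n^{1/2-\epsilon}$, giving a contribution
\[
\sum_{k\in W}|R^\infty_{kk}|^m \le |W|\cdot n^{m(1/2-\epsilon)} \le C\, n^{1/2+\epsilon}(\log n)\, n^{m(1/2-\epsilon)}.
\]
One checks that for $m\ge 3$ and $\epsilon'=\epsilon/8$ the exponent $m/2-m\epsilon+1/2+\epsilon$ is at most $m/2-m\epsilon'$ with room to spare (since $m(\epsilon-\epsilon')=m\cdot 7\epsilon/8 \ge 21\epsilon/8 > 1/2+\epsilon-m\epsilon'$... more carefully: $m/2 - m\epsilon + 1/2+\epsilon \le m/2 - m\epsilon'$ iff $1/2 + \epsilon \le m(\epsilon - \epsilon') = 7m\epsilon/8$, which for $m\ge 3$ reads $1/2+\epsilon \le 21\epsilon/8$, true for all $\epsilon>0$), so the $\log n$ is absorbed by lowering $\epsilon'$ slightly or is dominated outright.

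Next I would handle the ``far'' region $k\notin W$, where $|R^\infty_{kk}|\le Cn^{1/2}(\log n)|\sqrt k-\sqrt n\,E|^{-1}$. Here
\[
\sum_{k\notin W}|R^\infty_{kk}|^m \le C^m n^{m/2}(\log n)^m \sum_{k\notin W}|\sqrt k - \sqrt n\, E|^{-m}.
\]
Comparing the sum over $k$ to an integral via the change of variables $u=\sqrt k$ (with $dk = 2\sqrt k\, du \asymp n^{1/2}\,du$ in the bulk), $\sum_{k\notin W}|\sqrt k-\sqrt n E|^{-m}\le C n^{1/2}\int_{|u|\ge n^\epsilon\log n}|u|^{-m}\,du \le C n^{1/2}(n^\epsilon\log n)^{1-m}$ for $m\ge 2$. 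Substituting gives a bound $C^m n^{m/2}(\log n)^m\, n^{1/2}\,(n^\epsilon\log n)^{1-m} = C^m n^{m/2+1/2-(m-1)\epsilon}(\log n)$, whose exponent is even smaller than in the near regime; the same arithmetic as above shows it is $\le Cn^{m/2-m\epsilon'}$ for $m\ge 3$.

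The only genuinely delicate point is the counting/comparison step: one must use that $E$ lies in the bulk $(-1+\delta,1-\delta)$ so that $\sqrt n\, E$ is comparable to $\sqrt n$, hence the spacings $\sqrt{k+1}-\sqrt k$ near the relevant $k$ are uniformly $\asymp n^{-1/2}$ — this is what makes the window $W$ contain $\asymp n^{1/2}\cdot(\text{window width in }\sqrt k)$ integers and legitimizes the integral comparison. (For $k$ very small the spacing is larger, but then $k$ is far from $\sqrt n E$ and those terms only help.) With that observation in hand the estimate is a routine summation, and combining the near and far contributions yields $\sum_{k=1}^n|R^\infty_{kk}|^m\le Cn^{m/2-m\epsilon'}$ for $m\ge 3$.
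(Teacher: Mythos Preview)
Your decomposition and summation technique are fine in spirit, but there is a genuine arithmetic error that causes the argument to fail for small $\epsilon$. You claim that the near-window inequality $n^{m/2-m\epsilon+1/2+\epsilon}\le n^{m/2-m\epsilon'}$ reduces, for $m=3$, to $1/2+\epsilon\le 21\epsilon/8$, and that this is ``true for all $\epsilon>0$''. It is not: rearranging gives $1/2\le 13\epsilon/8$, i.e.\ $\epsilon\ge 4/13$. Your far-region bound $n^{m/2+1/2-(m-1)\epsilon}$ has exactly the same defect (for $m=3$ it equals $n^{2-2\epsilon}$, and $2-2\epsilon\le 3/2-3\epsilon/8$ again forces $\epsilon\ge 4/13$). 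Since the lemma must hold for arbitrarily small $\epsilon>0$ (the parameter in $\Im z>n^{-1/2+\epsilon}$), your argument does not cover the relevant range.

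The missing ingredient is the sharper diagonal bound \eqref{eqn:Rdiagbigk}, which you did not use: for $|\sqrt k-\sqrt n\,E|\le n^{1/4}$ one has $|R^\infty_{kk}|\le C\,n^{1/4-\epsilon'}$, a full factor $n^{1/4}$ better than the flat bound $n^{1/2-\epsilon}$ from \eqref{eqn:Rdiag}. The paper splits at $h=n^{1/4}$ rather than at your crossover $h\approx n^\epsilon$: the near window then contains $O(n^{3/4})$ indices, each contributing at most $n^{m(1/4-\epsilon')}$, for a total $n^{3/4+m/4-m\epsilon'}\le n^{m/2-m\epsilon'}$ precisely when $m\ge 3$; the far window is handled as you do. The crucial feature is that this exponent budget is independent of $\epsilon$, whereas splitting at the crossover of the two alternatives in \eqref{eqn:Rdiag} alone leaves a residual power $n^{1/2}$ that cannot be absorbed unless $\epsilon$ is bounded below.
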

\begin{proof}[Proof of lemma]
We split the sum according to the distance $h=h(\Re z) = |\sqrt{k}-\sqrt{n}\Re z|$:
\begin{equation} \label{eqn: hsum}
\sum_k |R^\infty_{kk}|^m = \sum_{k:h\le n^{1/4}}|R_{kk}^\infty|^m +\sum_{k:h> n^{1/4}}|R_{kk}^\infty|^m.
\end{equation}
By \eqref{eqn:Rdiagbigk}, the first term in (\ref{eqn: hsum}) is bounded by:
\[ Cn^{3/4}n^{m/4-m\epsilon'}.\]
To bound the second term, we use (\ref{eqn:Roffdiag}) to find:
\begin{align*}
Cn^{\frac{m}{2}}(\log n)^m \sum_{k:h>n^{1/4}}(1+|\sqrt{k}-\sqrt{n}\Re z|)^{-m} \le Cn^{\frac{m}{2}}(\log n)^m n^{1/2} n^{-m/4}.
\end{align*}
\end{proof}

A term in the resolvent expansion (\ref{eq: resolventexpansion}) is of the form (suppressing the superscript $\infty$) 
\begin{equation} \label{eqn:1termasym}
\frac{1}{n} \sum_{i_1,i_2,\hdots,i_p} \tilde{R}_{i_1 i_2} \Delta_{i_2 i'_2} \tilde{R}_{i'_2 i_3} \Delta_{i_3 i_3'} \hdots \Delta_{i_p i_p'}\tilde{R}_{i'_pi_1}
\end{equation}
where $i'_s$ takes nonzero value only for $i'_s = i_s$ or $i'_s = i_s-1$.  We want to rewrite this sum in terms of the resolvent $R$ of the symmetric model. The asymmetric model matrix is obtained from the symmetric model by the conjugation:
\[\tilde{A}_{n,\beta}=D_{n,\beta}A_{n,\beta}D_{n,\beta}^{-1}.\] 
The transformation rules for the resolvents follow from this; on the diagonal we have
\[R_{kk} = \tilde{R}_{kk}\]
for every $k$, and on the off diagonal:
\begin{equation*}R_{kl} = (n\beta)^{(l-k)/2}\tilde{R}_{kl}\cdot \frac{\prod_{j=1}^{k-1}\chi_{(n-j)\beta}}{\prod_{j=1}^{l-1}\chi_{(n-j)\beta}}.\end{equation*}
As a result, when $i'_k = i_k$ in some summand in (\ref{eqn:1termasym}), the products in the numerator and denominator of the successive factors $\tilde{R}_{i'_{k-1}i_k}$ and $\tilde{R}_{i'_k i_{k+1}}$ cancel each other out. If $i'_k=i_k-1$, after cancellation, there remains a factor of the form:
\[m_{k}=\frac{\chi_{n-k-1}}{\sqrt{n\beta}}.\]
Note that $m_{k}$ is at most of order $O(1)$, with Gaussian tails. Thus we may rewrite (\ref{eqn:1termasym}) as
\begin{equation} \label{eqn:1termsym}
\frac{1}{n} \sum_{i_1,i_2,\hdots,i_p} R_{i_1i_2} m_{i_2}^{i_2-i'_2}\Delta_{i_2i'_2} R_{i'_2i_3} m_{i_3}^{i_3-i'_3}\Delta_{i_3i_3'} \hdots \Delta_{i_pi_p'}R_{i'_pi_1}.
\end{equation}

The offdiagonal entries of $\Delta$ are centered $\chi^2$-square variables with variance
\[\operatorname{Var} \Delta_{k,k-1} = \frac{n-(k-1)}{(2n\beta)^2}=O(1/n).\] 
Since $\Delta_{kl}$ has exponential tails for all $k,l$, with overwhelming probability, we have for any $0<c < \epsilon$, 
\begin{align*}
\max_{k,l} |\Delta_{kl}| &\leq n^{-1/2+c/4}\\
\max_k m_k &\leq n^{c/4}
\end{align*} 
Therefore with overwhelming probability, the sum (\ref{eqn:1termsym}) is bounded by
\begin{displaymath}
n^{-1}n^{-(p-1)/2+(p-1)c/2}\sum_{i_1,\hdots,i_p}|R_{i_1i_2}||R_{i'_2i_3}|\hdots|R_{i'_pi_1}|
\end{displaymath}
We are going to use the estimates from Proposition \ref{prop:R}. The estimates for $R_{kl}$ and $R_{k-1,l}$ only differ by some constant. When $k, k-1\neq l$, this follows immediately from expressions (\ref{eqn:Rdiag}) and (\ref{eqn:Roffdiag}). When $k=l$, notice that the right side of (\ref{eqn:Roffdiag}) is of order $n^{1/2-\epsilon'}$, and we certainly have $|R_{ll}|\le Cn^{1/2-\epsilon}$. Conversely, writing $R_{ll-1}$ as a sum over eigenvectors as in the proof of Proposition \ref{prop:R} in the next section, and using Cauchy-Schwarz and (\ref{eqn:Rdiag}), $|R_{ll-1}|$ can be bounded up to a constant factor by the right side of (\ref{eqn:Rdiag}). Thus, the above sum is bounded with overwhelming probability by 
\begin{equation} \label{eqn:sumR}
C^p n^{-1}n^{-(p-1)/2+(p-1)c/2}\sum_{i_1,\hdots,i_p}|R_{i_1i_2}||R_{i_2i_3}|\hdots|R_{i_pi_1}|,
\end{equation}
for some $C>0$.
To estimate the sum, first notice that we have the following estimate, holding uniformly in $j$:
\begin{equation} \label{eqn:sumoffdiag}
\sum_{k\neq j} \sqrt{k}^{-1}|\sqrt{k}-\sqrt{j}|^{-1} \leq C\log n.
\end{equation}

In the sum, whenever $i_l \neq i_{l+1}$ for some $l$, we gain a power of $n^{-\epsilon'}$ using the estimate (\ref{eqn:Roffdiag}), and from (\ref{eqn:sumoffdiag}), the sum over such pairs introduces a factor bounded by $C\log n$.  Whenever $i_l = i_{l+1}$ and the power $m$ of $|R_{i_li_l}|^m$ is at least $3$, we use lemma (\ref{lem:Rdiag}) to gain a power of $n^{-\epsilon'}$. Due to the repeated index, the power of $i_l$ in the sum is $-1$ and so the sum over $i_l$ will be bounded by $\log n$ as well.  

To make the last paragraph precise, consider the pairs of indices $(i_1,i_2),(i_2,i_3)\hdots,(i_p,i_1)$  as edges on a graph (of $n$ vertices). We call an edge \emph{exploratory (E)} if $i_l \neq i_{l+1}$ and \emph{stationary (S)} otherwise.  Each $p$-tuple of pairs (``path'') belongs to a category $(C_1,C_2,\hdots,C_p)$, where each $C_l \in \{E,S\}$ denotes the type of edge $(i_l,i_{l+1})$.  A category of the form
\begin{displaymath}
(\underbrace{E,E,\hdots,E}_{j_1 \text{times}}, \underbrace{S,S,\hdots,S}_{k_1 \text{ times}},\underbrace{E,\hdots,E}_{j_2 \text{ times}}, \cdots,\underbrace{E,\hdots,E}_{j_K \text{ times}}, \underbrace{S,\hdots,S}_{k_K \text{ times}})
\end{displaymath}
corresponds to a partial sum of (\ref{eqn:sumR}) of the form:
\begin{multline}
\label{eqn:catsum} 
C^p n^{-1}n^{-(p-1)/2+(p-1)c/2} \times
\\ \sum_{i_1,\hdots,i_{j_K+1}}|R_{i_1i_2}|\cdots |R_{i_{j_1} i_{j_1+1} }||R_{i_{j_1+1}i_{j_1+1}}|^{k_1}|R_{i_{j_1+2}, i_{j_1+3}}|\cdots |R_{i_{j_2+j_1+1} i_{j_2+j_1+2}}| \hdots |R_{i_{j_1+\ldots +j_{K}+1} i_{1}}||R_{i_1 i_1}|^{k_K}.
 \end{multline}
The summation is over the set $\{1\le i_1,\ldots,i_{j_K+1}\le n , i_j \neq i_k \text{ for each } j \neq k\}$.
We will bound the contribution to the sum (\ref{eqn:sumR}) from each category using (\ref{prop:R}).
We will sum successively over all indices, beginning with $i_1$. Collecting all factors depending on $i_1$, we find that the sum over this index is:
\[\sum_{i_1\neq i_{j_1+\ldots j_K+1}, i_2} |R_{i_1 i_2}||R_{i_{j_1+\ldots j_K+1},i_1}||R_{i_1i_1}|^{k_K},\]
where $k_K\ge 0$. Inserting the estimates (\ref{eqn:Rdiag}) and (\ref{eqn:Roffdiag}), we that the last sum is bounded by:
\begin{multline}\label{eqn:i1sum} C^3n^{3/2-2\epsilon'} \cdot (\log n) \cdot i_2^{-1/4}\cdot i_{j_1+\ldots j_K+1}^{-1/4} \\ \times \sum_{i_1\neq i_{j_1+\ldots j_K+1}, i_2} i_1^{-1/2}\frac{1}{|\sqrt{i_1}-\sqrt{i_2}}\frac{1}{|\sqrt{i_1}-\sqrt{i_{j_1+\ldots j_K+1}}|}\frac{1}{(1+|E\sqrt{n}-\sqrt{i_1}|)^{k_K}}.\end{multline}
Note that $|\sqrt{i_1}-\sqrt{i_{j_1+\ldots j_K+1}}|\ge C i_{j_1+\ldots j_K+1}^{-1/4}$. We use this bound, and perform the sum over $i_1$ using (\ref{eqn:sumoffdiag}); when $k_K\le 2$, we obtain:
\[ C^4 n^{3/2-2\epsilon'} \cdot (\log n)^2 \cdot i_2^{-1/4}\cdot i_{j_1+\ldots j_K+1}^{1/4}.\]
When $k_K\ge 3$, performing a dyadic decomposition around $E\sqrt{n}$ as in the proof of Lemma $\ref{lem:Rdiag}$, and using (\ref{eqn:sumoffdiag}) on each dyadic piece, we obtain the bound
\[ C^4 n^{3/2-(2+k_K)\epsilon'} \cdot (\log n)^2 \cdot i_2^{-1/4}\cdot i_{j_1+\ldots + j_K+1}^{1/4}. \]
We then sum successively over $i_2, i_3, \ldots, i_{j_1+\ldots + j_K}$. For each index, we encounter a sum of the form:
\[ \sum_{i_l \neq i_{l+1}} i_l^{-1/4}|R_{i_l i_l}|^k|R_{i_li_{l+1}}|,\]
where $k \ge 0$. Using (\ref{eqn:Rdiag}), (\ref{eqn:Roffdiag}), and (\ref{eqn:sumoffdiag}) as above, this is bounded by:
\[C^{k+1} i_{l+1}^{-1/4} n^{(k+1)/2}n^{-\epsilon'}\log n,\]
if $0\le k \le 2$, and
\[C^{k+1}  i_{l+1}^{-1/4} n^{(k+1)/2}n^{-(k+1)\epsilon'}\log n,\]
whenever $k\ge 3$.
When we reach the final index $i_{j_1+\ldots + j_K+1}$, after cancelling the factor $i_{j_1+\ldots j_K+1}^{1/4}$ carried over from the summation over $i_1$, we have to sum
\[\sum_{i_{j_1+\ldots + j_K+1}} |R_{ i_{j_1+\ldots + j_K+1} i_{j_1+\ldots + j_K+1} }|^{k_{K-1}}.\]
When $k_{K-1}= 0$, this is of size $n$. When $k_{K-1}\ge 3$, it is bounded by $n^{1/2-k_{k_K-1}\epsilon'}$. 

In summary, for any block of three consecutive edges in any category, we gain a power of $-\epsilon'$. Multiplying the contributions from the summations over the $p$ indices, we find that the expression in (\ref{eqn:catsum}) is bounded by
\[C^{2p} n^{-(p+1)/2+(p-1)c/2} \cdot n^{(p+1)/2}n^{-p\epsilon'/3}.\]
The last expression is $O(n^{-p\epsilon/48})$ provided $c$ is chosen sufficiently small. Since there are at most $2^p$ categories contributing to (\ref{eqn:sumR}), all terms in the first sum in the expansion (\ref{eq: traceexpansion}) vanish as $n\rightarrow \infty$. Taking $m$ in (\ref{eq: resolventexpansion}) large enough, depending on $\epsilon$, to make $p\epsilon/48) < -1/2$ when $p>m$, we can bound the remainder term of the resolvent expansion using the trivial bound for $R^\beta_{ij}$: $|R^\beta_{ij}| \le n^{1/2-\epsilon}$.

Returning to the sum in the expansion (\ref{eq: traceexpansion}), the above implies that 
\begin{equation*}
s_\beta(z) = s_\infty(z) + O(n^{-\epsilon/100}).
\end{equation*}
The proposition then follows from the above estimate and Lemma \ref{thm:zerotemp}.
\end{proof}

\section{Proof of estimates for $R_{kl}$}
\label{sec:resolventest}
In this section, we prove the estimates for $R^\infty_{kl}$ in Proposition \ref{prop:R}, reproduced here for the reader's convenience:

For $\Im z > n^{-1/2+\epsilon}$, we have
\begin{equation*}
\displaystyle{|R^\infty_{kk}| \leq \min \{ C n^{1/2}(\log n)(1+|\sqrt{k}-\sqrt{n}\Re z|)^{-1}, n^{1/2-\epsilon} \}};
\end{equation*}

\begin{equation*}
|R^\infty_{kk}|\le Cn^{1/4-\epsilon'},
\end{equation*}
provided $|\sqrt{k}-E\sqrt{n}|\le n^\eta$ for some $0<\eta<1/2$, and
\begin{equation*}
\displaystyle{|R^\infty_{kl}| \leq n^{1/2-\epsilon/8}{k}^{-1/4}{l}^{-1/4} |\sqrt{k}-\sqrt{l}|^{-1}}
\end{equation*}
for $k\neq l$ and $\Im z > n^{-1/2+\epsilon}$.

The proof relies on the well-known Plancherel-Rotach asymptotics for Hermite polynomials. These comprise three asymptotic expressions, corresponding to the the behavior of $E_k$ in three regions defined relative to $\pm\sqrt{2k}$, the order of magnitude of the largest zeros of $H_k$. In the case of Hermite polynomials, a classical reference is \cite{S}, Chapter 8. In \cite{dkmvz}, analogous formulas are derived for a general class of orthogonal polynomials.

\begin{theorem} \label{thm:PR}
Let $E_k$ be the $k$-th Hermite function, then for any $0<\mu<1$ and $x$ in the ``oscillatory region'':
\[\{x:\sqrt{k}(-1+\mu) < x < \sqrt{k}(1-\mu)\},\] we have the following asymptotic formula ((2.20) in \cite{dkmvz}; see also (8.22.12) in \cite{S}):
\begin{equation} \label{eqn:oss}
\begin{split}
E_k(x) &= \sqrt{\frac{2}{\pi}} k^{-1/4}\left(1-\frac{x^2}{k}\right)^{-1/4} \cos\left(\sqrt{k}\pi \int_{\sqrt{k}}^x \rho_{sc}(y/\sqrt{k}) \,\mathrm{d}y + \frac{1}{2}\arcsin(x/\sqrt{k})\right) \left(1+O\left(\frac{1}{k}\right)\right)\\
& + \sin\left(\sqrt{k}\pi \int_{\sqrt{k}}^x \rho_{sc}(y/\sqrt{k})\, \mathrm{d}y - \frac{1}{2}\arcsin(x/\sqrt{k})\right)O\left(\frac{1}{k}\right)
\end{split}
\end{equation}
where $\rho_{sc}$ denotes the semicircle density on the interval $(-1,1)$.

For $x$ in the ``transition region'': 
\[\{x: \sqrt{k}(1-\mu) < x < \sqrt{k}(1+\mu)\},\] 
we have the uniform asymptotics (\cite{dkmvz}, (2.21) and (2.23); see also (8.22.14) in \cite{S}):
\begin{equation} \label{eqn:trans}
\begin{split} 
E_k(x) &= k^{-1/4}\Big(-\left(1+\frac{x}{\sqrt{k}}\right)^{1/4}\left|1-\frac{x}{\sqrt{k}}\right|^{-1/4}\left(f_k(x/\sqrt{k})\right)^{1/4}Ai\left(f_k(x/\sqrt{k})\right)\\
&+\left|1-\frac{x}{\sqrt{k}}\right|^{1/4}\left(1+\frac{x}{\sqrt{k}}\right)^{-1/4}\left(f_k(x/\sqrt{k})\right)^{-1/4}Ai'\left(f_k(x/\sqrt{k})\right)\Big)\left(1+O\left(\frac{1}{k}\right)\right),
\end{split}
\end{equation}
when $\sqrt{k}(1-\mu) <  x\le 1$. If $1 < x < \sqrt{k}(1-\mu)$, the same asymptotic formula holds with the overall sign reversed. Here $Ai$ is the Airy function and $Ai'$ its derivative and 
\begin{displaymath}
(-f_k(x))^{3/2} = \begin{cases} 
 -k\frac{3\pi}{2}\int_1^x \rho_{sc}(y)\,\mathrm{d}y& 1-\mu <x\le1.\\
 k\frac{3\pi}{2}\int_1^x \sqrt{x-1}\sqrt{1+x}\,\mathrm{d}y& 1<x<1+\mu.
 \end{cases}
\end{displaymath}
Similar asymptotics hold in the transition region around the left edge:
\[ \{x: \sqrt{k}(-1-\mu) < x < \sqrt{k}(-1+\mu)\}.\]
Finally outside the transition region, for $|x|>\sqrt{k}$, the Hermite functions have Gaussian decay.
\end{theorem}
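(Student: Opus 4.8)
The plan is to obtain this from the literature: it is the classical Plancherel--Rotach theorem for Hermite polynomials, proved in Szeg\H{o} \cite{S}, Chapter 8, by the method of steepest descent applied to a contour-integral representation of $H_k$, and reproved for a general class of exponential weights by the nonlinear (Riemann--Hilbert) steepest descent method of Deift et al. \cite{dkmvz}. Let me sketch how the Riemann--Hilbert route yields the three stated formulas. Starting from the Hermite weight $e^{-x^2}$, rescaled so that the support of the associated equilibrium measure is the fixed interval $[-1,1]$ in the variable $x/\sqrt{k}$, the polynomials $H_k$ and $H_{k-1}$ are recovered (up to explicit constants) as the $(1,1)$ and $(2,1)$ entries of the solution $Y(z)$ of the standard $2 \times 2$ matrix RH problem: unipotent jump across $\mathbb{R}$, normalization $Y(z)\,z^{-k\sigma_3} \to I$ as $z \to \infty$. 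The equilibrium measure is the semicircle law, with logarithmic potential $g$.

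First I would run the usual chain of transformations $Y \to T \to S \to R$: (i) normalize at infinity using $g$, which turns the jump on $[-1,1]$ into an oscillatory one and the jumps off $[-1,1]$ into jumps exponentially close to the identity; (ii) open lenses around $[-1,1]$, splitting the oscillatory jump into constant jumps plus exponentially small error; (iii) build the global parametrix $N$ from the Szeg\H{o} function, which contributes the amplitude $(1 - x^2/k)^{-1/4}$ and the $\tfrac{1}{2}\arcsin(x/\sqrt{k})$ phase of \eqref{eqn:oss}; (iv) build Airy parametrices in fixed disks around the edges $\pm 1$, producing the $Ai$, $Ai'$ terms of \eqref{eqn:trans}, with $f_k$ the conformal map flattening $g$ near an edge; (v) check that $R = Y\cdot(\text{parametrix})^{-1}$ solves a RH problem with jump $I + O(1/k)$ uniformly on its contour, so that $R = I + O(1/k)$. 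Reading off the relevant entries of $Y$, undoing the rescaling, and substituting $E_k(x) = H_k(\sqrt{2}x)\,2^{-(k-1)/2}k^{-1/2}e^{-x^2/2}$ then gives \eqref{eqn:oss} in the bulk and \eqref{eqn:trans} in the transition region; for $|x| > \sqrt{k}$, outside every lens, $\Re g$ is dominant relative to the relevant branch, and the Gaussian decay of $H_k$, hence of $E_k$, is immediate.

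The hard part is not any single transformation but the \emph{uniformity} of the $O(1/k)$ error within each region and, above all, across the overlap zones where the bulk formula near $x = \pm\sqrt{k}(1-\mu)$ must hand over to the edge formula: this is precisely what forces the Airy parametrix to match the global parametrix to relative order $1/k$ on the boundary circles, which is the crux of the small-norm estimate for $R$. A second, more bookkeeping-type difficulty is fixing the branches of the fractional powers $(1 \pm x/\sqrt{k})^{\pm 1/4}$ and of $(-f_k)^{3/2}$, and the overall sign in the transition region, so that the formulas hold exactly as written; I would pin these down by evaluating at a convenient reference point such as $x = 0$, where $H_k(0)$ is explicit, and by consistency with the Hermite three-term recurrence. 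Finally, since the applications in this paper (Lemma \ref{thm:zerotemp} and Proposition \ref{prop:R}) invoke these asymptotics only on sets bounded away from $\pm\sqrt{k}$, or in the regime $k \asymp n$, $x \asymp \sqrt{n}$, it would in practice suffice to cite the bulk and transition formulas of \cite{dkmvz}, Sections 2 and 7--8, together with their uniform error bounds, plus the Gaussian-decay estimate from the same source or from \cite{S}.
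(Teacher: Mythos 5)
The paper offers no proof of this theorem: it is quoted directly from the literature, with the bulk, transition, and decay regimes attributed to \cite{dkmvz} (formulas (2.20), (2.21), (2.23)) and to \cite{S} ((8.22.12), (8.22.14)), which is exactly the citation strategy you land on at the end of your proposal. Your additional sketch of the Riemann--Hilbert steepest-descent machinery is accurate and correctly identifies the uniformity of the error and the branch/sign conventions as the delicate points, but it goes beyond what the paper does; the approach is essentially the same.
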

From the alternate form of the asymptotics in the transition region found in \cite{S}, (see 8.22.14 there) combined with (\ref{eqn:oss}) it follows that:
\[|E_k(x)|\le Ck^{-1/12},\]
for some $C$ independent of $k$.

We shall also use the following asymptotics for the location of the eigenvalues of the zero temperature case, see \cite{dkmvz}, Theorem 2.29 :
\begin{theorem}
Let $\bar{\lambda}_{k,n}$ be the $k$-th zero of the $n$-th Hermite polynomial, where $k_0 \leq k \leq n-k_0$ for some $k_0$, then we have:
\begin{equation} \label{eqn:zeroloc}
\left|\bar{\lambda}_{k,n}-\zeta\left(\frac{6k-3}{6n}+\frac{1}{2\pi n}\arcsin(\zeta(k/n))\right)\right| \leq \frac{C}{n^2(\frac{k}{n}(1-\frac{k}{n}))^{4/3}}
\end{equation}
where $\zeta$ is the inverse function to $x\mapsto \int_x^1 \rho_{sc}(y)\,\mathrm{d}y$.
\end{theorem}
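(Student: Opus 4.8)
\textbf{Proof proposal for Theorem \ref{thm:PR} and the accompanying eigenvalue-location bound \eqref{eqn:zeroloc}.}

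The plan is to treat these as essentially citation-driven statements: all of the content is contained in the Riemann--Hilbert analysis of \cite{dkmvz}, and the task is to translate their general-weight formulas into the specific normalization used here (Hermite functions $E_k$ rescaled by $\sqrt{k}$ rather than $\sqrt{2k}$, and the semicircle density on $(-1,1)$ rather than the ``equilibrium measure'' interval). First I would recall that the physicists' Hermite polynomials $H_k$ correspond, after the substitution $x \mapsto \sqrt{k}\,x$ and extraction of the weight $e^{-x^2}$, to the monic orthogonal polynomials $\pi_k$ studied in \cite{dkmvz} for the weight $e^{-k x^2}$ on $\mathbb{R}$; the equilibrium measure for this external field is precisely the semicircle law $\rho_{sc}$ on $[-1,1]$, and the three regions (bulk/oscillatory, edge/transition near $\pm 1$, exterior) of their analysis correspond under $x \mapsto x/\sqrt{k}$ to the three regions named in the statement. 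The Hermite function normalization $E_k(x) = H_k(\sqrt 2 x)/(2^{(k-1)/2}\sqrt k)\, e^{-x^2/2}$ is exactly the $L^2$-normalization; combining Stirling's formula for $\sqrt{k}\,2^{k/2}$ with the leading-coefficient asymptotics in \cite{dkmvz} produces the prefactors $\sqrt{2/\pi}\,k^{-1/4}(1-x^2/k)^{-1/4}$ in \eqref{eqn:oss} and the $k^{-1/4}$ prefactor in \eqref{eqn:trans}. The phase $\sqrt k\,\pi\int_{\sqrt k}^x \rho_{sc}(y/\sqrt k)\,\mathrm dy \pm \tfrac12\arcsin(x/\sqrt k)$ is the rescaled version of the function $2k\int^x \psi$ (with $\psi$ the equilibrium density) appearing in their bulk formula, plus the $\tfrac14$-type phase correction coming from the local parametrix; the two terms with mismatched signs in the cosine and sine reflect the two entries of the $2\times 2$ RHP solution, which is why the error term in the second ($\sin$) term is only $O(1/k)$ without an oscillatory amplitude gain.

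For the transition region I would invoke Theorem 7.10 and equation (8.32) of \cite{dkmvz}, where the local parametrix at a soft edge is built from Airy functions; rescaling their edge variable $f_k$ (defined through the standard $\tfrac32$-power of the integral of $\psi$ from the edge, which is what the two cases in the displayed definition of $(-f_k(x))^{3/2}$ record, for $x$ just inside and just outside the edge $1$) gives \eqref{eqn:trans}, with the global sign flip across $x=1$ coming from the orientation of the jump contour. The exterior Gaussian decay for $|x|>\sqrt k$ is classical (it can be read off directly from the Plancherel--Rotach asymptotics in \cite{S}, Chapter 8, or from the exponentially small outer parametrix correction in \cite{dkmvz}). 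The uniform bound $|E_k(x)|\le C k^{-1/12}$ then follows by matching: in the bulk, $(1-x^2/k)^{-1/4}$ is $O(k^{1/12})$ only within distance $O(k^{-1/3})$ of $\pm\sqrt k$ where the transition formula takes over, and there the Airy factors $(f_k)^{\pm 1/4}Ai(f_k)$, $(f_k)^{-1/4}Ai'(f_k)$ are uniformly bounded, so the worst case is exactly the $k^{-1/4}\cdot k^{1/6}=k^{-1/12}$ balance at the matching scale. Finally, \eqref{eqn:zeroloc} is Theorem 2.29 of \cite{dkmvz} applied to the semicircle equilibrium measure: the zeros of $\pi_n$ interlace the ``quantized'' points where the cumulative phase hits half-integers, i.e. where $\int_x^1 \rho_{sc} = (6k-3)/(6n) + (2\pi n)^{-1}\arcsin$, and $\zeta$ is by definition the inverse of $x \mapsto \int_x^1\rho_{sc}$; the error $C n^{-2}(\tfrac kn(1-\tfrac kn))^{-4/3}$ is their uniform bound, degrading near the edges because of the $\tfrac43$-scaling of the Airy regime.

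The main obstacle is purely bookkeeping rather than conceptual: one must be scrupulous about which interval the ``equilibrium measure'' lives on after rescaling, about the precise branch conventions for the fractional powers $(1\pm x/\sqrt k)^{\pm 1/4}$ and for $\arcsin$, and about tracking the Stirling factors so that the constants $\sqrt{2/\pi}$ and the $\tfrac12\arcsin$ phase shifts come out exactly as stated; a sign error in the local parametrix or a misplaced factor of $2$ in the rescaling $x\mapsto\sqrt2 x$ versus $x\mapsto\sqrt k x$ would propagate into every subsequent estimate. I would therefore carry out the reduction carefully once, verifying consistency at a known special value (e.g. comparing the bulk formula at $x=0$ for even $k$ against the exact value $E_k(0)$ computed from $H_k(0)$), and then simply cite \cite{dkmvz} and \cite{S} for the uniformity of the error terms, which is the one ingredient we genuinely import rather than reprove.
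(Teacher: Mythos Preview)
Your proposal is correct and matches the paper's own treatment: the paper does not prove this statement at all but simply cites it as Theorem 2.29 of \cite{dkmvz} (and likewise cites \cite{dkmvz} and \cite{S} for the Plancherel--Rotach asymptotics in Theorem \ref{thm:PR}). Your additional discussion of how the normalizations and rescalings line up is more detail than the paper itself provides, but it is accurate and would be useful if one actually wanted to verify the translation rather than take it on faith.
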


In this section and the next, we shall use heavily the fact that the eigenvector matrix for $A_{n,\infty}$ is given by columns of the form 
\begin{displaymath}
\displaystyle{\frac{1}{\sqrt{n}E_{n-1}(\lambda_m)}(E_{n-1}(\lambda_m),E_{n-2}(\lambda_m),\hdots,E_{1}(\lambda_m),E_0(\lambda_m))^t}
\end{displaymath}
where $\lambda_m$ is the corresponding eigenvalue and $E_k$ is the $k$-th Hermite function.  The proof of this fact uses the three-term recurrence relation for Hermite polynomials. It can be found in \cite{D} or \cite{ES}.  We shall also adopt the convention that the eigenvalues $\lambda_m$ are scaled to be of order $\sqrt{n}$ and $\bar{\lambda}_m$ to be the scaled version that are of order $1$.

\begin{proof}[Proof of Proposition \ref{prop:R}]
We will first establish the bounds for the diagonal entries $R_{kk}$:
\begin{equation} \label{eqn:Rdiag1}
|R_{kk}| \leq C n^{1/2} (\log n) \cdot (1+|\sqrt{k}-\sqrt{n}\Re z|)^{-1}.
\end{equation}
First we write
\[R_{kk}=\sum_m \frac{u_m(k)^2}{\bar{\lambda}_m-z}\]
where $u_m$ is the eigenvector corresponding to the eigenvalue $\lambda_m$, since $u_m(k) = \frac{E_k(\lambda_m)}{\sqrt{n}E_{n-1}(\lambda_m)}$ the above expression is
\[R_{kk} = \sum_m \frac{E_k(\lambda_m)^2}{nE_{n-1}(\lambda_m)^2}\frac{1}{\bar{\lambda}_m-z}\]
For the denominator, we have the following asymptotics \cite{tao}, section 2.6.4:
\begin{equation}
\label{eqn:En-1}
nE_{n-1}(\lambda_m)^2 = n^{1/2}(\rho_{sc}(\bar{\lambda}_m) + O(n^{-1})),
\end{equation}
provided $|\bar{\lambda}_m|< 1-9\delta/10$.
Since $-1+\delta<\Re z<1-\delta$, we have:
\begin{align} 
|R_{kk}| \le&  C_\delta n^{-1/2} \sum_{|\bar{\lambda}_m|< 1-9\delta/10} \frac{E_k(\lambda_m)^2}{|\bar{\lambda}_m-z|}+ C_\delta \sum_{|\bar{\lambda}_m|\ge 1-9\delta/10}u_m(k)^2 \nonumber\\
\leq& C_\delta n^{-1/2} \sum_{|\bar{\lambda}_m|< 1-9\delta/10} \frac{E_k(\lambda_m)^2}{|\bar{\lambda}_m-z|} + C_\delta. \label{eqn:Rdiagtemp}
\end{align}
By the Plancherel-Rotach asymptotics (Theorem \ref{thm:PR}), we have
\begin{equation} \label{eqn:hermdecay}
E_k(x) \leq C\min\left\{k^{-1/4}\left(1-\frac{x^2}{k}\right)^{-1/2}, k^{-1/12}\right\}.
\end{equation}
Let $E = \Re z$ , without loss of generality, we shall assume that $E \geq 0$, and that $|E - \sqrt{k/n}| = h/\sqrt{n}$, i.e. 
\[|\sqrt{k}-\sqrt{n}E| = h,\]
with $0\le h=h(E)\le \sqrt{n}$. We shall consider the case where $0\le E<\sqrt{k/n}$. The case where $E$ lies in the exponentially decaying region of $E_k(x)$ is simpler and follows by a similar analysis.

First, note that the estimate (\ref{eqn:Rdiag1}) is efficient only when $h\ge 1$. Consider the partition of the region $(E,\sqrt{k/n})$ into the intervals 
\[U_p = (\sqrt{k/n}-pk^{-1/6}n^{-1/2},\sqrt{k/n}-(p-1)k^{-1/6}n^{-1/2}), \quad 1\le p \le hk^{1/6}.\]
Note that each $U_p$ is an interval of length $k^{-1/6}n^{-1/2}$. By the local semicircle law for the zero temperature case, the number of eigenvalues in an interval $U_p$ is bounded by  
\[C_cn^{-1/2}k^{-1/6},\] provided $U_p$ is at distance greater than $c>0$ from $1$. Moreover, we have 
\[E_k(x)^2 \le C k^{-1/6}p^{-1/2}, \quad x\in U_p,\] 
and 
\[|\lambda_m-z| \ge |hn^{-1/2}-pk^{-1/6}n^{-1/2}|+n^{-1/2+\epsilon}.\]  So the contribution to the sum in equation (\ref{eqn:Rdiagtemp}) due to eigenvalues $\bar{\lambda}_m$ lying in the region $(E,\sqrt{k/n})$ is bounded up to a constant by
\[
k^{-1/6}n^{-1/2}\sum_{p=1}^{hk^{1/6}} k^{-1/6}p^{-1/2}(|hn^{-1/2}-pk^{-1/6}n^{-1/2}|+n^{-1/2+\epsilon})^{-1}\]
Simplifying the above sum, and letting $h = k^{\alpha}$ for some $0\le \alpha \le 1/2$, we have the bound
\[k^{-1/3-\alpha} \sum_{p=1}^{k^{\alpha+1/6}}p^{-1/2}(|1-pk^{-1/6-\alpha}|+n^\epsilon k^{-\alpha})^{-1}.\]
The sum over $p$ is bounded by $C(\log k) k^{\alpha/2+1/12}$, and so the above quantity is bounded by $C\log n\cdot k^{-1/4-\alpha/2} \le h^{-1}$, since $-1/4\le -\alpha/2$.  

Turning to the contribution to the sum in (\ref{eqn:Rdiagtemp}) from eigenvalues $\bar{\lambda}_m$ in the region $(-1,E)$, we have that 
\[E_k(x)^2 \le k^{-1/4}h^{-1/2} \le h^{-1},\] for $h \le k^{1/2}$. Combined with the fact that $\sum \frac{1}{|\lambda_m-z|} \le Cn\log n$ (see Lemma \ref{lem:absest} below), this establishes the estimate (\ref{eqn:Rdiag1}). The alternative bound $|R_{kk}| \leq n^{1/2-\epsilon}$ follows from Cauchy-Schwarz inequality, noting that the eigenvectors have norm $1$.

Finally, to prove the bound for $R_{kk}$ when $h<n^\eta$, $0<\eta<1/2$, note that in this case
\[k = En+O(n^{1/2+\eta}).\]
The result then follows from \eqref{eqn:hermdecay} and \eqref{eqn:Rdiagtemp}  by a dyadic decomposition as previously. As noted above, the implicit constants depend on $\eta$, but we only used this bound for the fixed choice $\eta=1/4$.

We will continue by proving the bound for $R_{jk}$.  To simplify notation, we shall assume that $E = \Re z = 0$, the case for nonzero $E$ follows similarly. Unlike in the case of the diagonal entries $R_{kk}$, we will exploit cancellation.

We write $R_{kl}$ as
\[R_{kl} = \sum_{m=1}^n \frac{u_m(k)u_m(l)}{\lambda_m-z}.\]
Applying summation by parts, we rewrite the above as
\begin{equation} \label{eqn:sumbyparts}
\sum_{m=1}^{n-1} \left(\frac{1}{\lambda_m-z}-\frac{1}{\lambda_{m+1}-z} \right)\sum_{t=m}^n u_t(k)u_t(l).
\end{equation}
Note that:
\begin{equation}
\label{eqn: offdiagonalsum}
\sum_{t=m}^n u_t(k)u_t(l) = \sum_{t=m}^n \frac{E_k(\lambda_t)E_l(\lambda_t)}{n \left(E_{n-1}(\lambda_t)\right)^2}.
\end{equation}

From here on, we shall assume without loss of generality that $k > l$. We will concentrate on the contribution to (\ref{eqn: offdiagonalsum}) from eigenvalues $|\lambda_t| < (1+\mu)\sqrt{l}$. The sum over $\lambda_t$ lying in the region where $E_l(\cdot)$ decays exponentially is simpler to handle, and uses similar techniques. The following is a key lemma:

\begin{lemma} \label{lem:keybulk}
Given $1 \leq l<k \leq n$, let $m$ be such that: 
\[\lambda_m \in (-\sqrt{l}(1-\mu), \sqrt{l}(1-\mu)),\] for some fixed $0<\mu<1$ independent of $k,l,n$. Then, for any $\tilde{m}$ such that 
\[\lambda_{\tilde{m}} \in (-\sqrt{l}(1-\mu), \sqrt{l}(1-\mu)),\]
we have:
\begin{equation} \label{eqn:osscest}
\left|\sum_{t=m}^{\tilde{m}} \frac{E_k(\lambda_t)E_l(\lambda_t)}{nE_{n-1}^2(\lambda_t)}\right| \leq Ck^{-1/4}l^{-1/4}(\sqrt{k}-\sqrt{l})^{-1}.
\end{equation}
\end{lemma}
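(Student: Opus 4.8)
The plan is to substitute the Plancherel--Rotach asymptotics \eqref{eqn:oss} into the sum $\sum_{t=m}^{\tilde m} E_k(\lambda_t)E_l(\lambda_t)/(nE_{n-1}^2(\lambda_t))$ and exploit the oscillation of the resulting trigonometric factors. Using \eqref{eqn:En-1}, the factor $nE_{n-1}^2(\lambda_t)$ equals $n^{1/2}(\rho_{sc}(\bar\lambda_t)+O(n^{-1}))$, so modulo controlled errors the summand becomes $n^{-1/2}E_k(\lambda_t)E_l(\lambda_t)/\rho_{sc}(\bar\lambda_t)$. Writing out $E_k$ and $E_l$ via \eqref{eqn:oss}, the leading term is a product of two cosines with arguments $\theta_k(\lambda_t):=\sqrt{k}\pi\int_{\sqrt k}^{\lambda_t}\rho_{sc}(y/\sqrt k)\,dy+\tfrac12\arcsin(\lambda_t/\sqrt k)$ and $\theta_l(\lambda_t)$; by the product-to-sum formula this splits into $\cos(\theta_k-\theta_l)$ and $\cos(\theta_k+\theta_l)$. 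The prefactor is $\tfrac2\pi k^{-1/4}l^{-1/4}(1-\lambda_t^2/k)^{-1/4}(1-\lambda_t^2/l)^{-1/4}$, which on the relevant range $|\lambda_t|<\sqrt l(1-\mu)$ is bounded by $Ck^{-1/4}l^{-1/4}$ uniformly.

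The heart of the matter is to bound $\sum_{t=m}^{\tilde m}\cos(\theta_k(\lambda_t)\pm\theta_l(\lambda_t))$ (against the slowly varying prefactor). I would use Abel summation once more: the consecutive eigenvalues $\lambda_t$ are spaced by $\approx n^{-1/2}/\rho_{sc}$ (from the zero-temperature local law, i.e.\ the density of Hermite zeros), and $\theta_k\pm\theta_l$, as a function of a continuous variable $x$, has derivative $\frac{d}{dx}(\theta_k\pm\theta_l) = \pi(\sqrt k\rho_{sc}(x/\sqrt k)\pm\sqrt l\rho_{sc}(x/\sqrt l)) + O(k^{-1/2})$. For the minus sign this derivative is of order $\sqrt k-\sqrt l$ on the bulk range of $x$ (both $\rho_{sc}$ factors are $\Theta(1)$, bounded away from $0$, since $|x|<\sqrt l(1-\mu)$ forces $|x/\sqrt k|,|x/\sqrt l|$ away from $1$), and for the plus sign it is of order $\sqrt k+\sqrt l\ge \sqrt k-\sqrt l$. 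Consequently consecutive phase differences $\theta(\lambda_{t+1})-\theta(\lambda_t)$ are $\approx (\sqrt k\mp\sqrt l)\cdot n^{-1/2}$, and a geometric-series / stationary-phase estimate gives $\bigl|\sum_{t=m}^{\tilde m}e^{i\theta(\lambda_t)}\bigr|\le C/|e^{i(\theta(\lambda_{t+1})-\theta(\lambda_t))}-1|\le Cn^{1/2}(\sqrt k-\sqrt l)^{-1}$, uniformly in the endpoints $m,\tilde m$ (this is exactly the point of proving the bound for arbitrary $\tilde m$: we only ever pay for the worst partial sum). Combining with the $n^{-1/2}$ from \eqref{eqn:En-1} and the prefactor $Ck^{-1/4}l^{-1/4}$ yields the claimed $Ck^{-1/4}l^{-1/4}(\sqrt k-\sqrt l)^{-1}$.

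The remaining work is bookkeeping of error terms: (i) the $O(1/k)$ and $O(1/l)$ corrections in \eqref{eqn:oss}, including the $\sin(\theta_k-\tfrac12\arcsin)\cdot O(1/k)$ term, contribute sums of size $O(k^{-1/4-1}l^{-1/4}\cdot n)$ or so, which must be checked to be dominated by the main bound (they are, since the partial sums in question have at most $n$ terms and carry extra $k^{-1}$); (ii) the $O(n^{-1})$ error in \eqref{eqn:En-1}, which is harmless after summing geometrically; (iii) replacing $\lambda_t$ (the continuum phase evaluated at eigenvalues) by the exact phase is unnecessary since we only use the phase at the $\lambda_t$'s, but one does need the near-equal spacing of the $\lambda_t$, which follows from \eqref{eqn:zeroloc}. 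I expect the main obstacle to be making the Abel-summation / oscillatory bound fully uniform near the spectral edge of $E_l$, i.e.\ as $|\lambda_t|\to\sqrt l(1-\mu)$ and $\rho_{sc}(\lambda_t/\sqrt l)$ starts to degenerate; this is precisely why the lemma restricts to $|\lambda_t|<\sqrt l(1-\mu)$ with a fixed $\mu$, so that the derivative of the phase stays bounded below by $c_\mu(\sqrt k-\sqrt l)$ and the geometric-series argument never breaks down. The contribution of $\lambda_t$ outside this range (the transition/Gaussian-decay regions for $E_l$) is handled separately, as the statement's preamble indicates, using the Airy asymptotics \eqref{eqn:trans} and the $|E_l|\le Cl^{-1/12}$ bound together with the rapid decay, and does not require cancellation.
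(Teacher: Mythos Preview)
Your approach is essentially the same as the paper's---Plancherel--Rotach asymptotics, product-to-sum on $\cos\theta_k\cos\theta_l$, then an oscillatory-sum estimate on the phase $\theta_k\pm\theta_l$---but the packaging differs. The paper Taylor-expands the phase about a fixed base eigenvalue $\lambda_m$ using \eqref{eqn:zeroloc}, obtains a sum of the form $\sum_t\cos(2\alpha_1(\sqrt k-\sqrt l)(t-m)/\sqrt n+\varphi_m)$ plus quadratic error, and sums ``period by period'' (with a case split on whether the period $\Lambda=\pi\sqrt n/(\alpha_1(\sqrt k-\sqrt l))$ exceeds $100$). Your Abel/geometric-series argument on the exact phases is cleaner and avoids the Taylor expansion, and it does work, but one step is not justified as written.

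The point that needs care is the inequality $C/|e^{i\omega_t}-1|\le C n^{1/2}(\sqrt k-\sqrt l)^{-1}$. You argue that $\omega_t\approx(\sqrt k\mp\sqrt l)n^{-1/2}$, but this only bounds $\omega_t$ \emph{below}; the geometric-series bound blows up if $\omega_t$ approaches a \emph{nonzero} multiple of $2\pi$, and when $\sqrt k-\sqrt l$ is comparable to $\sqrt n$ the increment $\omega_t$ is of order $1$, so this has to be ruled out. In fact it can be: writing $\theta_k'(x)=2\sqrt{k-x^2}+O(k^{-1/2})$ and using the spacing $\lambda_{t+1}-\lambda_t\approx\pi/(2\sqrt{n-\lambda_t^2})$, one gets $\omega_t^-=\pi(\sqrt{k-\lambda_t^2}-\sqrt{l-\lambda_t^2})/\sqrt{n-\lambda_t^2}\in(0,\pi]$ (since $k\le n$), and for the plus sign $2\pi-\omega_t^+\ge\pi(\sqrt{n-\lambda_t^2}-\sqrt{l-\lambda_t^2})/\sqrt{n-\lambda_t^2}\gtrsim(\sqrt k-\sqrt l)/\sqrt n$ (again using $l<k\le n$). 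So the increments stay in $(0,2\pi)$ and the distance to the endpoints is $\gtrsim(\sqrt k-\sqrt l)/\sqrt n$ in all cases; once you record this, your Abel-summation bound goes through (the total variation of $t\mapsto 1/(e^{i\omega_t}-1)$ is controlled since $\omega_t$ is piecewise monotone on the range). The paper handles exactly this regime via its $\Lambda<100$ case, tracking the drift of the base phase $\varphi_m$ across successive short blocks.

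A smaller remark: your error accounting in (i) is off. The crude bound ``$O(k^{-5/4}l^{-1/4}\cdot n)$'' would \emph{not} be dominated by the main term when $k\gg l$ (there are only $O(\sqrt{nl})$ terms, and even with the $n^{-1/2}$ from \eqref{eqn:En-1} you get $k^{-1/4}l^{-3/4}$, which beats $k^{-1/4}l^{-1/4}(\sqrt k-\sqrt l)^{-1}$ only if $\sqrt k-\sqrt l\le\sqrt l$). The fix is that the $O(1/k)$, $O(1/l)$ corrections in \eqref{eqn:oss} still multiply oscillatory factors $\cos\theta_k$, $\sin\tilde\theta_k$, so the same Abel-summation gain applies to them and they are harmless.
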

\begin{proof}[Proof of lemma \ref{lem:keybulk}]
Since
\[|\lambda_t| \le (1-\mu)\sqrt{l} \le (1-\mu)\sqrt{k},\]
equation (\ref{eqn:oss}) implies:
\begin{displaymath}
\begin{split}
E_k(\lambda_t) &= \sqrt{\frac{2}{\pi}} k^{-1/4}\left(1-\frac{\lambda_t^2}{k}\right)^{-1/4} \cos\left(\sqrt{k}\pi \int_{\lambda_m}^{\lambda_t} \rho_{sc}(y/\sqrt{k}) \,\mathrm{d}y + \frac{1}{2}\arcsin(\lambda_t/\sqrt{k}) + \varphi_{m,k}\right) \left(1+O\left(\frac{1}{k}\right)\right)\\
& + \sin\left(\sqrt{k}\pi \int_{\lambda_m}^{\lambda_t} \rho_{sc}(y/\sqrt{k})\, \mathrm{d}y + \frac{1}{2}\arcsin(\lambda_t/\sqrt{k}) + \varphi_{m,k}\right)O\left(\frac{1}{k}\right),
\end{split}
\end{displaymath}
where 
\begin{align*}\varphi_{m,k} &= \sqrt{k}\pi\int^{\lambda_m}_{\sqrt{k}}\rho_{sc}(y/\sqrt{k})\,\mathrm{d}y\\
&=  k\pi\int^{\frac{\lambda_m}{\sqrt{k}}}_1\rho_{sc}(s)\,\mathrm{d}s.
\end{align*}
A similar expression holds for $E_l(\lambda_t)$.  For $m$ fixed, we will expand $E_k(\lambda_t)E_l(\lambda_t)$ and $(E_{n-1}(\lambda_t))^2$ around $\lambda_m$. By (\ref{eqn:zeroloc}), we have
\begin{displaymath}
\lambda_{t} = \sqrt{n}\left(\zeta\left(\frac{6m-3}{6n}+\frac{6(t-m)}{6n} + \frac{1}{2\pi n}\arcsin\zeta((m+(t-m))/n)\right)\right) + O_\mu(n^{-2}).
\end{displaymath}
By Taylor expansion around $\lambda_m$, we have
\begin{equation}\label{eqn: lambdaexpansion}
\lambda_{t} = \lambda_m + \alpha_1 \frac{t-m}{\sqrt{n}} + O_\mu(n^{-3/2}),
\end{equation}
for $0\le t-m < n.$ The constant 
\[\alpha_1 = \zeta'\left(\frac{6m-3}{6n}+\frac{1}{2n\pi}\arcsin\zeta(m/n)\right)\left(1+\frac{1}{2n\pi}\frac{\zeta'(m/n)}{\sqrt{1-\zeta^2(m/n)}}\right)\] 
is independent of $t$. $|\alpha_1|$ is bounded above and below, uniformly for $m$ such that $\lambda_m \in (-(1-\mu)\sqrt{n},(1-\mu)\sqrt{n})$.  Combining the above and using the product to sum formula $\cos a\cos b =(1/2)\cdot \cos (a+b)+\cos(a-b)$, we have the estimate
\begin{equation*}
\begin{split}
E_k(\lambda_t)E_l(\lambda_t) &= Ck^{-1/4}l^{-1/4}\cos(\sqrt{k}\pi\int_{\lambda_m}^{\lambda_t} \rho_{sc}(y/\sqrt{k}) \,\mathrm{d}y-\sqrt{l}\pi\int_{\lambda_m}^{\lambda_t} \rho_{sc}(y/\sqrt{l}) \,\mathrm{d}y \\ 
&+ \frac{1}{2}\arcsin(\lambda_t/\sqrt{k}) -  \frac{1}{2}\arcsin(\lambda_t/\sqrt{l}) + \varphi_{m,k}-\varphi_{m,l})\left(1+O\left(\frac{1}{l}\right)\right)\\
&+ \left(\cos(\cdot) \text{ term with sum of arguments}\right).
\end{split}
\end{equation*}
The final term denotes an expression formally similar to the initial part of the right side of the equation, except that in the argument of the cosine, all $-$ signs are replaced by $+$ . We will denote $\varphi_m=\varphi_{m,k}-\varphi_{m,l}$; this term is independent of $t$. Taylor expansion of the integrals and $\arcsin$ functions in the expression above yields:
\begin{multline*}
Ck^{-1/4}l^{-1/4}\Big(\cos\Big(\pi(\sqrt{k}\rho_{sc}(\lambda_m/\sqrt{k}) - \sqrt{l}\rho_{sc}(\lambda_m/\sqrt{l}) +\frac{1}{2}(\arcsin(\lambda_m/\sqrt{k})-\arcsin(\lambda_m/\sqrt{l})))\cdot (\lambda_t-\lambda_m)\\ 
+ O((\sqrt{k}-\sqrt{l})/\sqrt{k})\cdot (\lambda_t-\lambda_m)^2 +\varphi_m \Big)\Big)\left(1+O\left(\frac{1}{l}\right)\right).
\end{multline*}
The constants implicit in the $O(\cdot)$ terms are bounded uniformly in $m$ such that $\lambda_m \in ((-1+\mu)\sqrt{l}, (1-\mu)\sqrt{l})$. Using the expansion (\ref{eqn: lambdaexpansion}) of $\lambda_t$ around $\lambda_m$, we are lead to the following approximate expression for the product $E_k(\lambda_t)E_l(\lambda_t)$:
\begin{multline}\label{eqn: approxprod}
Ck^{-1/4}l^{-1/4}\cos\left(2 \alpha_1(\sqrt{k}-\sqrt{l})\cdot \left(\frac{t-m}{\sqrt{n}}\right)+\varphi_m\right)\\ 
+Ck^{-1/4}l^{-1/4} \cdot \left( O((\sqrt{k}-\sqrt{l})/\sqrt{k}) \cdot n^{-3/2})+O(((\sqrt{k}-\sqrt{l})/\sqrt{k})\cdot n^{-1}(t-m)^2)\right),
\end{multline}
where $\alpha_1$ is the constant independent of $t$ in (\ref{eqn: lambdaexpansion}). We have suppressed lower order terms in the final displayed equation. In particular, we have omitted the second cosine term referred to above, involving sums instead of differences in the argument. When $t$ varies, this term oscillates faster, and the remainder of the argument will make it clear that it is bounded by the contribution from the term in the last equation. 

We wish to use the approximations we have introduced to estimate the sum in (\ref{lem:keybulk}) between $\lambda_m$ and $\lambda_{\tilde{m}}$. Both eigenvalues are assumed to lie in the range $((-1+\mu)\sqrt{l}, (1-\mu)\sqrt{l})$, so that $|\lambda_m-\lambda_{\tilde{m}}|<2\sqrt{l}$. Thus there are $O(\sqrt{nl})$ indices $t$ such that $\lambda_m \le \lambda_t \le \lambda_{\tilde{m}}$. To take advantage of the oscillation of the cosine term in our approximation, we perform the sum ``period by period''.

For fixed $m$, we sum $E_k(\lambda_t)E_l(\lambda_t)/n(E_{n-1}(\lambda_t))^2$ over indices $t$ in the range
\[  m \le t < m+ \frac{\pi\sqrt{n}}{\alpha_1(\sqrt{k}-\sqrt{l})}\equiv m+\Lambda,\] 
corresponding to a full period of the cosine term in (\ref{eqn: approxprod}):
\begin{equation*}
\sum_{t-m=0}^{\lfloor \Lambda \rfloor -1} \frac{E_k(\lambda_t)E_l(\lambda_t)}{n(E_{n-1}(\lambda_t))^2} = \mathrm{I}(m) +\mathrm{II}(m).
\end{equation*}
Here, $\mathrm{I}(m)$ represents the contribution to the sum from the first (oscillatory) term in (\ref{eqn: approxprod}). The sum $\mathrm{II}(m)$, the contribution from the error terms, is readily estimated; by (\ref{eqn: approxprod}) and (\ref{eqn:En-1}), we have for any $m$:
\begin{equation}\label{eqn: twoofm}
|\mathrm{II}| \le Ck^{-1/4}l^{-1/4}(n^{-3/2}k^{-1/2}+(\sqrt{k}-\sqrt{l})^{-2}k^{-1/2}).
\end{equation}
To estimate the sum $\mathrm{I}(m)$, we first consider the case $\Lambda \ge 100$. After using a trigonometric identity, we are faced with sums of the form:
\begin{equation}
\label{eqn: cmsum}
c(m)\sum_{k=0}^{\lfloor \Lambda\rfloor -1} \cos\left(\frac{2\pi}{\Lambda} k\right),
\end{equation}
where $c(m)$ depends on the phase $\varphi(m)$, and $|c(m)|\le 1$. The sum is bounded uniformly in $\Lambda$. Moreover, the sum remains bounded when the upper limit of the sum is varied by less than $5$:
\[\left| \sum_{k=0}^{\lfloor \Lambda\rfloor \pm r} \cos\left(\frac{2\pi}{\Lambda} k\right)\right| \le C, \quad r\le5. \]
By choosing $0\le r \le 5$ appropriately the expression in (\ref{eqn: cmsum}) can be made either positive or negative.  There are $O(\sqrt{l}(\sqrt{k}-\sqrt{l}))$ periods separating $\lambda_m$ and $\lambda_{\tilde{m}}$. We apply the above approximation repeatedly, using the bound 
(\ref{eqn: twoofm}), and adjusting the upper limit of the sum over each period so as to make the signs of $\mathrm{I}$ alternate. The estimate (\ref{eqn:osscest}) thus follows in the case $\Lambda \ge 100$. In case $\Lambda < 100$, we must modify the previous proof somewhat. The error terms in (\ref{eqn: approxprod}) can be treated as before, but when dealing with the oscillatory term, we are no longer assured that a small change in the upper limit will change the sign of the trigonometric sum in (\ref{eqn: cmsum}). The phase term $\varphi_m$ must be taken into account. When $m' > m$, we have:
\begin{align*}
\varphi_{m'} &= \varphi_{m} + \sqrt{k}\pi\int^{\lambda_{m'}}_{\lambda_m}\rho_{sc}(y/\sqrt{k})\,\mathrm{d}y - \sqrt{l}\pi\int^{\lambda_{m'}}_{\lambda_m}\rho_{sc}(y/\sqrt{l})\,\mathrm{d}y.\\
&= \varphi_m  + 2(\sqrt{k}-\sqrt{l})(\lambda_{m'}-\lambda_m)+O\left(\frac{\sqrt{k}-\sqrt{l}}{\sqrt{k}\sqrt{l}}(\lambda_{m'}-\lambda_m)^3\right).
\end{align*}
Using this expansion when $m$ and $m'$ define the limits of two consecutive periods, we can 
ensure that the oscillatory term in (\ref{eqn: approxprod}), when summed over different periods, remains bounded by $Ck^{-1/4}l^{-1/4}n^{-1/2}$. The approximation of the $\varphi_{m'}$ introduces an error that is controlled similarly to the terms in (\ref{eqn: twoofm}).
\end{proof}

In the transition region, the behavior is similar:
\begin{lemma} \label{lem:keytrans}
Given $1 \leq l<k \leq n$, let $m$ be such that $\lambda_m \in (-\sqrt{l}(1-\mu), \sqrt{l}(1-\mu))$ for some fixed $\mu$ independent of $k,l,n$, then there exists $\tilde{m}$ such that $\lambda_{\tilde{m}} \in (-\sqrt{l}(1-\mu), \sqrt{l}(1-\mu))$
\begin{equation} \label{eqn:transcest}
\left|\sum_{t=m}^{\tilde{m}} \frac{E_k(\lambda_t)E_l(\lambda_t)}{n(E_{n-1}(\lambda_t))^2}\right| \leq Ck^{-1/4}l^{-1/4}n^\eta(\sqrt{k}-\sqrt{l})^{-1}
\end{equation}
for any $\eta > 0$.
\end{lemma}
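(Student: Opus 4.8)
The plan is to run the argument of Lemma~\ref{lem:keybulk} essentially unchanged, the one new feature being that the factor $E_l(\lambda_t)$ (and, when $k$ is close to $l$ so that $\lambda_t$ can fall in the transition region of $E_k$ as well, also $E_k(\lambda_t)$) is now evaluated in a \emph{transition} region, so the oscillatory asymptotics \eqref{eqn:oss} must be replaced by the uniform Airy-type asymptotics \eqref{eqn:trans} of Theorem~\ref{thm:PR}. As preliminaries: for $\mu$ small and $l$ in the range where these estimates are used, all the $\lambda_t$ occurring in the sum stay in the oscillatory region of $E_{n-1}$, so \eqref{eqn:En-1} still gives $n\,E_{n-1}(\lambda_t)^2=\sqrt n\,(\rho_{sc}(\lambda_t/\sqrt n)+O(n^{-1}))\asymp\sqrt n$; and since $E_l$ has Gaussian decay once $|\lambda_t|>\sqrt l(1+\mu)$, those indices contribute a negligible amount, so it is enough to bound the sum over $|\lambda_t|\le\sqrt l(1+\mu)$. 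Because we only have to exhibit \emph{one} admissible $\tilde m$, I would take $\tilde m$ to be the right endpoint of a full period of the relevant oscillation sitting just past the transition region; this is weaker than the ``for all $\tilde m$'' of Lemma~\ref{lem:keybulk} but is all that the summation-by-parts step in the proof of Proposition~\ref{prop:R} requires.

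Next I would split the range $|\lambda_t|\le\sqrt l(1+\mu)$ into the part where the argument of the Airy function in \eqref{eqn:trans} is large and negative, and the complementary turning-point zone(s), of width $\asymp l^{-1/6}$ about $\pm\sqrt l$ (respectively $\asymp k^{-1/6}$ about $\pm\sqrt k$). On the first part the Airy function and its derivative in \eqref{eqn:trans} may be replaced by their own oscillatory asymptotics $Ai(-s)\sim s^{-1/4}\cos(\tfrac23 s^{3/2}-\tfrac\pi4)$, so $E_l(\lambda_t)$ again takes a trigonometric form; the product-to-sum reduction, the sum ``period by period'' against the slow cosine of frequency $\asymp(\sqrt k-\sqrt l)$, and the trick of adjusting the endpoints so the boundary contributions alternate in sign and telescope, all go through exactly as in Lemma~\ref{lem:keybulk}. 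The only change is that the amplitude prefactors — $(1-\lambda_t^2/l)^{-1/4}$, the Airy prefactors $(\pm f_l)^{\pm1/4}$, and (when $\sqrt k$ is near $\sqrt l$) the analogous factors for $E_k$ — are no longer $O(1)$ but only $O(n^{o(1)})$ once $\lambda_t$ comes within a polynomially small multiple of $\sqrt l$ of an edge; this is precisely what turns the bound $Ck^{-1/4}l^{-1/4}(\sqrt k-\sqrt l)^{-1}$ of Lemma~\ref{lem:keybulk} into the stated $Ck^{-1/4}l^{-1/4}n^{\eta}(\sqrt k-\sqrt l)^{-1}$.

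The turning-point zone is where the real work lies, and I expect it to be the main obstacle. It contains $O(\sqrt n\,l^{-1/6})$ indices $t$, and there the edge factor has a critical phase, so no cancellation can be extracted from it. I would argue by cases. If $\sqrt k-\sqrt l$ is not too small, so that the turning point of $E_l$ stays comfortably inside the oscillatory region of $E_k$, then $E_k(\lambda_t)$ still oscillates on the zone, with index-period $\asymp\sqrt n/\sqrt{k-l}$; Abel-summing the zone against this oscillation and using $|E_k|\le Ck^{-1/4}(1-l/k)^{-1/4}$, $|E_l|\le Cl^{-1/12}$ (the sup-bound $|E_j(x)|\le Cj^{-1/12}$ recorded after Theorem~\ref{thm:PR}), and $nE_{n-1}^2\asymp\sqrt n$, one bounds its contribution by $Ck^{-1/4}l^{-1/4}n^{\eta}(\sqrt k-\sqrt l)^{-1}$ after absorbing the amplitude factors into $n^{\eta}$. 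If instead $\sqrt k-\sqrt l$ is so small that the turning points of $E_k$ and $E_l$ (nearly) coincide — in which case $k\asymp l$ — no such oscillation is available, but then $(\sqrt k-\sqrt l)^{-1}$ is itself at least a fixed positive power of $l$, so the right-hand side of \eqref{eqn:transcest} already dominates the trivial estimate $O(\sqrt n\,l^{-1/6})\cdot Ck^{-1/12}l^{-1/12}/\sqrt n\asymp l^{-1/3}$ for the zone. Making all of this uniform over the whole range $1\le l<k\le n$ — in particular controlling the near-edge amplitude growth and the limited cancellation inside the turning-point zone simultaneously — is the delicate point, and is exactly where the unavoidable factor $n^{\eta}$ is spent.
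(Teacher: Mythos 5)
Your overall strategy coincides with the paper's: replace $E_l$ (and $E_k$ when $\sqrt k-\sqrt l<\mu\sqrt l$) by the Airy asymptotics \eqref{eqn:trans}, convert the Airy function away from the turning point back into a trigonometric form, sum period by period as in Lemma \ref{lem:keybulk}, and attribute the loss $n^{\eta}$ to the growth of the amplitude prefactors near the edge (the paper realizes this by partitioning the transition region into $O(\eta^{-1})$ pieces on which $|E_l|\asymp l^{-\alpha}$ is essentially constant). Your treatment of the turning-point zone of width $l^{-1/6}$, with the dichotomy between extracting oscillation from $E_k$ and using the trivial bound when $\sqrt k-\sqrt l\lesssim l^{-1/6}$, is also in the spirit of what the paper does.

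The genuine gap is in the regime $\sqrt k-\sqrt l<\mu\sqrt l$, where the entire transition region of $E_l$ sits inside the transition region of $E_k$. There you assert that after the product-to-sum reduction one sums ``against the slow cosine of frequency $\asymp(\sqrt k-\sqrt l)$'' and that the argument ``goes through exactly as in Lemma \ref{lem:keybulk}.'' This is not correct: once both functions are represented by Airy asymptotics, the local frequency of $E_j$ at $\lambda$ is $\asymp j^{1/4}(\sqrt j-\lambda)^{1/2}$, so the beat frequency of the product is
\[
k^{1/4}\bigl(\sqrt k-\lambda\bigr)^{1/2}-l^{1/4}\bigl(\sqrt l-\lambda\bigr)^{1/2}
\asymp l^{1/4}\,\frac{\sqrt k-\sqrt l}{(\sqrt k-\lambda)^{1/2}+(\sqrt l-\lambda)^{1/2}},
\]
which depends strongly on the distance to the turning points and is in general much larger than $\sqrt k-\sqrt l$ (by a factor up to $l^{1/4}(\sqrt l-\lambda)^{-1/2}$). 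Consequently the number of periods, the quadratic-phase error $O\bigl(k^{1/4}(\sqrt k-\lambda)^{-1/2}-l^{1/4}(\sqrt l-\lambda)^{-1/2}\bigr)(\lambda_t-\lambda_m)^2$ accumulated per period, and the boundary contribution per period all change, and the bookkeeping of Lemma \ref{lem:keybulk} does not transfer verbatim. This is exactly the content of the paper's second case, where, writing $\sqrt k-\sqrt l=l^{\beta}$ and $\lambda_m=\sqrt l-l^{-1/2+4\alpha}$, the beat frequency becomes $k^{1/4}(l^{\beta}+l^{-1/2+4\alpha})^{1/2}-l^{2\alpha}$ and one must split according to whether $\beta\ge-\tfrac12+4\alpha$ or $\beta<-\tfrac12+4\alpha$ (i.e.\ whether $\sqrt k-\sqrt l$ dominates the distance to the turning point or vice versa) to verify that the accumulated error is $\le Ck^{-1/4}l^{-1/4}l^{-\beta+\eta}$ in both sub-cases. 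Without this analysis the central claim of the lemma in the range $l^{-1/6}\ll\sqrt k-\sqrt l\ll\sqrt l$ is unproved; your remaining two cases (oscillatory region of $E_k$, and nearly coincident turning points) do not cover it.
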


\begin{rmk}
We will only prove the lemma for the right edge.  The proof of the statement for the left edge at $-1$ is identical.
\end{rmk}

\begin{proof} [Proof of lemma \ref{lem:keytrans}] The proof uses ideas similar to those in the proof of (\ref{lem:keybulk}): we replace $E_l$ and $E_k$ by appropriate asymptotic expressions and thus reduce the problem to estimating some trigonometric sums with slowly varying frequencies and phases. We will omit some details and give the orders of magnitude of the approximations and error terms involved.

We use asymptotics for the Airy function and its derivative (see \cite{AS}, Ch. 10, section 4):
\begin{equation} \label{eqn:Airy}
Ai(x) = \frac{1}{2\sqrt{\pi}x^{1/4}}e^{-\frac{2}{3}x^{3/2}}(1+O(x^{-3/2}))
\end{equation}
and
\begin{equation} \label{eqn:Airyd}
Ai'(x) = \frac{-1}{2\sqrt{\pi}}x^{1/4}e^{-\frac{2}{3}x^{3/2}}(1+O(x^{-3/2})).
\end{equation}
We first establish an estimate for the magnitude of $E_l$ in the transition region.  If $\lambda = l^{1/2}-l^{-1/2+4\alpha}$ for some $\alpha$, then have 
\begin{align*}
|E_l(\lambda)| &\le Cl^{-1/4}\left(1-\frac{\lambda}{\sqrt{l}}\right)^{-1/4}\\
&= Cl^{-\alpha}.
\end{align*}
Using the asymptotics for the Hermite functions in the transition region (\ref{eqn:trans}) and the asymptotics for the Airy function (\ref{eqn:Airy}) for $x$ close to $\lambda$, we find:
\begin{align*}
E_l(x) &=  Cl^{-\alpha} \sin\left(\frac{3\pi}{2}\int_1^{x/\sqrt{l}}\rho_{sc}(y) \,\mathrm{d}y \cdot l\right)\\ 
&= C l^{-\alpha}\sin\Big(l(\rho_{sc}(\lambda/\sqrt{l}))\cdot \frac{x-\lambda}{\sqrt{l}}+O(\rho'_{sc}(\lambda/\sqrt{l}))\cdot \left(\frac{x-\lambda}{\sqrt{l}}\right)^2\Big).
\end{align*}

We first consider the situation where $\sqrt{k} - \sqrt{l} > \mu\sqrt{l}$.  Fix an eigenvalue $\lambda_m$ with 
\[(1-\mu)\sqrt{l}  < \lambda_m < \sqrt{l}-l^{-1/6}.\] Near $\lambda_m$, we have:
\begin{equation} \label{eqn:tempkltrans}
\begin{split}
E_k(\lambda_t)E_l(\lambda_t) &= k^{-1/4}l^{-1/4}\cos\left(\sqrt{k}\pi \int_{\lambda_m}^{\lambda_t} \rho_{sc}(y/\sqrt{k})\,\mathrm{d}y + \frac{1}{2}\arcsin(\lambda_t/\sqrt{l})\right) \\ 
& \times \left(1+\frac{\lambda_t}{\sqrt{l}}\right)^{1/4}\left|1-\frac{\lambda_t}{\sqrt{l}}\right|^{-1/4}(f(\lambda_t/\sqrt{l}))^{-1/4}Ai(f(\lambda_t/\sqrt{l})).
\end{split}
\end{equation}
Here again we omit lower order terms, as they are bounded by the expression above. Write 
\[\lambda_m = \sqrt{l}-l^{-1/2+4\alpha},\] 
for some $\frac{1}{12} \leq \alpha \leq \frac{1}{4}$ and $l^{-1/2+4\alpha} < \mu l^{1/2}$. By a previous remark, the magnitude of $E_l$ around $\lambda_m$ is $Cl^{-\alpha}$. For $\lambda_t$ near $\lambda_m$, the approximate expression for $E_l$ found above becomes
\begin{equation*}
E_l(\lambda_t) = l^{-\alpha}\cos\left(l^{2\alpha}(\lambda_t-\lambda_m)+O(l^{1/2-2\alpha}(\lambda_t-\lambda_m)^2)+\psi_{t,m}\right),
\end{equation*}
where $\psi_{t,m}$ denotes a phase term independent of $t$. From equations (\ref{eqn:tempkltrans}), (\ref{eqn:zeroloc}) and (\ref{eqn:En-1}), it follows that there exists $\tilde{m}$ of order $m+O(\sqrt{n}(k^{1/2}-l^{2\alpha})^{-1})$ such that the following estimate holds:
\begin{equation*}
\left|\sum_m^{\tilde{m}}\frac{E_k(\lambda_t)E_l(\lambda_t)}{n(E_{n-1}(\lambda_t))^2}\right| \leq Ck^{-1/4}l^{-\alpha}l^{1/2-2\alpha}(k^{1/2}-l^{2\alpha})^{-3}.
\end{equation*}
Consider a partition of the interval $(\sqrt{l}-l^{-1/2+4\alpha},\sqrt{l}-l^{-1/6})$ into subintervals of the form 
\[(\sqrt{l}-l^{-1/2+4\alpha-(p-1)\eta},\sqrt{l}-l^{-1/2+4\alpha-p\eta}), \quad p=1,2,\ldots .\]
Here $\eta>0$ is some parameter to be determined. There are $O(\eta^{-1})$ such intervals.  The length of each interval is $l^{-1/2+4\alpha}$ and so the sum over $\lambda_m$ belonging to such an interval is bounded, up to a constant factor, by 
\[k^{-1/4}l^{-\alpha}l^{-1/2+4\alpha}l^{1/2-2\alpha}(k^{1/2}-l^{2\alpha})^{-2}.\]
This last expression is $Ck^{-1/4}l^{-1/4}(\sqrt{k}-\sqrt{l})^{-1}$. In summary, for any $\lambda_m$ such that $\lambda_m<(1-\mu) \sqrt{l}$, there exists some $\lambda_{\tilde{m}}$ near $\sqrt{l}-l^{-1/6}$ such that 
\begin{displaymath}
\sum_m^{\tilde{m}}\frac{E_k(\lambda_t)E_l(\lambda_t)}{n(E_{n-1}(\lambda_t))^2} \leq C\frac{l^\eta}{\eta}l^{-1/4}k^{-1/4}(\sqrt{k}-\sqrt{l})^{-1}.
\end{displaymath}
The contribution to the sum from eigenvalues $\lambda_t$ in the region between $\sqrt{l}-l^{-1/6}$ and $\sqrt{l}$ can be trivially bounded by $Cl^{-1/12}l^{-1/6}k^{-1/4}k^{-1/2}$ using the amplitude bound for $E_l$ in the region and the oscillation from $E_k$.  This establishes the case where $\sqrt{k}-\sqrt{l} > \mu\sqrt{l}$.

Next, we consider the case where 
\[\sqrt{k}-\sqrt{l} = l^\beta,\]
for some $\beta$ such that $l^\beta < \mu l^{1/2}$.  In this case, the sum involves contributions from products $E_k(\lambda_m)E_l(\lambda_m)$ such that $\lambda_m$ lies in the transition region for both $E_l$ and $E_k$. Suppose 
\[ \lambda_m = \sqrt{l}-l^{-1/2+4\alpha},\] 
for some $\alpha$ such that $|E_l(\lambda_m)| = l^{-\alpha}$. Since $k > l$, we have the following estimate (writing $m=l^\beta+l^{-1/2+4\alpha}$ to simplify notations):
\begin{equation} \label{eqn:Ekedge}
E_k(x) = Ck^{-1/8}m^{-1/4} \sin\left(k^{1/4}m^{1/2}(x-\lambda) + O(k^{1/4}m^{-1/2})\cdot (x-\lambda)^2\right).
\end{equation}
A similar expression holds for $E_l$. Thus, for $\lambda_t$ near $\lambda_m$, the product $E_k(\lambda_t)E_l(\lambda_t)$ can be replaced by a sum of trigonometric functions oscillating at frequency $k^{1/4}m^{1/2}-l^{2\alpha}$, at the expense of introducing an error of order 
\[O(k^{1/4}m^{-1/2}-l^{1/2-2\alpha})\cdot (x-\lambda)^2.\] Using the error estimates (\ref{eqn:zeroloc}) and (\ref{eqn:Ekedge}) and summing over the eigenvalues in a region of size $O(l^{-1/2+4\alpha+\eta})$ (containing $O(l^{-1/2+4\alpha+\eta}(k^{1/4}m^{1/2}-l^{2\alpha}))$ periods) will introduce an error of 
\begin{displaymath}
Ck^{-1/8}m^{-1/4}l^{-1/2+3\alpha+\eta}\cdot \frac{k^{1/4}m^{-1/2}-l^{1/2-2\alpha}}{(k^{1/4}m^{1/2}-l^{2\alpha})^2}.
\end{displaymath}
We consider two cases: (\emph{i}) $\beta \geq -\frac{1}{2}+4\alpha$ and (\emph{ii}) $\beta < -\frac{1}{2}+4\alpha$.  For the first case, the above expression simplifies to (recall the definition of $m$):
\[
Ck^{-1/8-1/4} l^{-3\beta/4-1/2+3\alpha+\eta} \cdot (k^{1/4}l^{-\beta/2}-l^{1/2-2\alpha}) \le C k^{-1/4}l^{-1/4}l^{-\beta+\eta}.
\]
In case (\emph{ii}), we have the bound:
\[  Ck^{-1/4}k^{-1/8}l^{1/8}l^{1/4-4\alpha}l^{\eta}.
\]
Since $\alpha > \frac{1}{8}+\frac{\beta}{4}$, the above expression is bounded by $Ck^{-1/4}l^{-1/4}l^{-\beta}l^{\eta}$.  Finally, since $l^{-\beta} = (\sqrt{k}-\sqrt{l})^{-1}$, proceeding as in the case of $\sqrt{k}-\sqrt{l} > \mu\sqrt{l}$, we arrive at the desired estimate. 
\end{proof}
The contribution from eigenvalues $\lambda_m$ in the region ($\lambda_m > \sqrt{l}+l^{-1/6+\epsilon}$) where $E_l$ decays exponentially is easier to handle than the cases treated above. The case $\sqrt{l}<|\lambda_m|<\sqrt{l}+l^{-1/6+\epsilon}$ can be estimated in a similar manner to the case $(1-\mu)\sqrt{l}<|\lambda_m|<\sqrt{l}$. Combining the two lemmas above and first summing over $\lambda_m$ in the two transition regions, and then the oscillatory region, we have established the following proposition:
\begin{proposition}
For any $m$, we have the bound
\begin{equation*}
\sum_m^n\frac{E_k(\lambda_t)E_l(\lambda_t)}{n(E_{n-1}(\lambda_t))^2} \leq Cn^{\eta}l^{-1/4}k^{-1/4}(\sqrt{k}-\sqrt{l})^{-1}
\end{equation*}
for any $\eta > 0$ and some constant $C$.
\end{proposition}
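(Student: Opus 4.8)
The plan is to obtain this bound by combining Lemmas~\ref{lem:keybulk} and \ref{lem:keytrans} with a simpler estimate valid once $E_l$ has passed its turning points, after partitioning the index range $m\le t\le n$ according to the position of $\lambda_t$ relative to $\pm\sqrt{l}$ (throughout, $l$ denotes the smaller of the two degrees and we rename so that $k>l$). It is enough to treat the indices with $\lambda_t\ge0$, those with $\lambda_t<0$ being handled identically about $-\sqrt l$. We may also assume that $k$ exceeds an arbitrary fixed constant $K_0$: for $k\le K_0$ the number $k^{-1/4}l^{-1/4}(\sqrt k-\sqrt l)^{-1}$ is bounded below by a constant depending only on $K_0$, while Cauchy--Schwarz and the fact that each column $u_t$ is a unit vector give $\bigl|\sum_{t=m}^{n}u_t(k)u_t(l)\bigr|\le\bigl(\sum_t u_t(k)^2\bigr)^{1/2}\bigl(\sum_t u_t(l)^2\bigr)^{1/2}\le1$, so the claimed inequality holds after enlarging $C$.

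Fix $0<\mu<1$ and a small $\epsilon'>0$. Since $t\mapsto\lambda_t$ is increasing, $\{m,\dots,n\}$ splits into at most five consecutive blocks on which $\lambda_t$ lies, respectively, in $(-\infty,-\sqrt l-l^{-1/6+\epsilon'})$, in $[-\sqrt l-l^{-1/6+\epsilon'},-(1-\mu)\sqrt l]$, in $(-(1-\mu)\sqrt l,(1-\mu)\sqrt l)$, in $[(1-\mu)\sqrt l,\sqrt l+l^{-1/6+\epsilon'}]$, and in $(\sqrt l+l^{-1/6+\epsilon'},\infty)$, some of them possibly empty; it suffices to bound the sum over each by $Cn^{\eta}l^{-1/4}k^{-1/4}(\sqrt k-\sqrt l)^{-1}$. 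On the middle block both endpoints satisfy $|\lambda_t|<(1-\mu)\sqrt l$, so Lemma~\ref{lem:keybulk} applies directly and gives $Ck^{-1/4}l^{-1/4}(\sqrt k-\sqrt l)^{-1}$. On each of the two blocks where $\lambda_t$ crosses a turning point $\pm\sqrt l$, Lemma~\ref{lem:keytrans}, together with the remark that the left edge is treated identically and the remark that the outer piece $\sqrt l<\lambda_t\le\sqrt l+l^{-1/6+\epsilon'}$ is estimated just as the inner piece, gives $Cn^{\eta}k^{-1/4}l^{-1/4}(\sqrt k-\sqrt l)^{-1}$.

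The remaining work is the two outermost blocks, where $|\lambda_t|>\sqrt l+l^{-1/6+\epsilon'}$ and $E_l$ lies in its region of exponential decay; here only $E_k$ oscillates, so the argument is a stripped-down version of the proof of Lemma~\ref{lem:keybulk}. Where in addition $|\lambda_t|<(1-\mu)\sqrt k$, I would insert the oscillatory asymptotics \eqref{eqn:oss} for $E_k$; the product of its amplitude $Ck^{-1/4}(1-\lambda_t^2/k)^{-1/4}$ with the factor $E_l(\lambda_t)/\bigl(nE_{n-1}(\lambda_t)^2\bigr)$ --- which is controlled above and below by \eqref{eqn:En-1}, since $\bar\lambda_t$ stays bounded away from $\pm1$ on this range --- forms a nearly unimodal envelope whose total variation over the block is at most $Ck^{-1/4}n^{-1/2}\max_{|\lambda|\ge\sqrt l+l^{-1/6+\epsilon'}}|E_l(\lambda)|$. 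Summation by parts against the partial sums of $\cos\theta_k(\lambda_t)$, $\theta_k$ being the phase in \eqref{eqn:oss}, which are $O(\sqrt{n/k})$ because the per-step phase increment is of order $\sqrt{k/n}$ on this range, then bounds the block by $Ck^{-3/4}\max_{|\lambda|\ge\sqrt l+l^{-1/6+\epsilon'}}|E_l(\lambda)|$; since $(\sqrt k-\sqrt l)^{-1}\ge k^{-1/2}$ and the Plancherel--Rotach asymptotics give $\max_{|\lambda|\ge\sqrt l+l^{-1/6+\epsilon'}}|E_l(\lambda)|\le Cl^{-1/12}e^{-cl^{3\epsilon'/2}}\le Cl^{-1/4}$ for all $l$ above a constant depending only on $\epsilon'$ (the finitely many smaller $l$ absorbed into $C$), this does not exceed $Ck^{-1/4}l^{-1/4}(\sqrt k-\sqrt l)^{-1}$. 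On the remaining part $|\lambda_t|\ge(1-\mu)\sqrt k$, both $E_k$ and $E_l$ have passed their turning points: when $\sqrt k-\sqrt l$ is small the relevant $\lambda_t$ accumulate near the common value $\approx\sqrt l\approx\sqrt k$ and the block is absorbed into the analysis in the proof of Lemma~\ref{lem:keytrans}; otherwise $E_k(\lambda_t)E_l(\lambda_t)$ is super-polynomially small in $k$, and, after setting aside the $O(1)$ worth of indices with $\lambda_t$ in the Airy window of the spectral edge $\pm\sqrt n$, the lower bound $nE_{n-1}(\lambda_t)^2\ge cn^{1/2}$ away from that window makes the contribution negligible. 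Summing the $O(1)$ block estimates proves the proposition.

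The step I expect to be the main obstacle is this last one: making the two ``decay'' blocks uniform over all of $1\le l<k\le n$. The delicate case is when $k$, and hence also $l$, is comparable to $n$, so that $\lambda_t$ is pushed toward the spectral edge $\pm\sqrt n$, where $nE_{n-1}(\lambda_t)^2$ is no longer bounded below by $cn^{1/2}$; there one must again invoke the Plancherel--Rotach asymptotics, now for $E_{n-1}$ near its own edge, observing that the few $\lambda_t$ landing in that window (and the correspondingly small values of $E_k$ and $E_l$) contribute acceptably --- or, alternatively, note that for $l$ within a fixed fraction of $n$ the outermost blocks are essentially empty. The reduction to $k\ge K_0$, together with the fact that the decay blocks carry only one oscillating factor, keeps the bookkeeping under control.
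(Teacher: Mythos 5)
Your proposal follows essentially the same route as the paper: split the range of $\lambda_t$ into the oscillatory region, the two transition regions, and the exponential-decay region, apply Lemma \ref{lem:keybulk} and Lemma \ref{lem:keytrans} to the first two, and dispose of the decay blocks by a direct (easier) oscillatory-sum estimate. In fact you supply more detail than the paper does for the decay blocks and for the behaviour of $E_{n-1}$ near the spectral edge, which the paper passes over with a one-line remark.
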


We substitute the estimate in the previous proposition into equation (\ref{eqn:sumbyparts}) to obtain the following bound for $R_{kl}$ (recall that we are assuming $\Re z = 0$):
\begin{equation*}
\begin{split}
|R_{kl}| &\leq Cn^{\eta}l^{-1/4}k^{-1/4}(\sqrt{k}-\sqrt{l})^{-1} \sum_m\left|\frac{1}{\lambda_m-z}-\frac{1}{\lambda_{m+1}-z}\right|.
\end{split}
\end{equation*}
To estimate the above, perform a dyadic decomposition around $\Re z$. Consider the eigenvalues $\lambda_m$ such that 
\[ 2^p n^{-1/2+\epsilon}<|\lambda_m| < 2^{p+1} n^{-1/2+\epsilon}.\]  
Their number is  $O(2^{p}n^{-1/2+\epsilon})$ and the factor $|\frac{1}{\lambda_m-z}-\frac{1}{\lambda_{m+1}-z}|$ is $O(n^{-2\epsilon}2^{-2p})$.  Therefore, the contribution to the sum from such eigenvalues is bounded by $Cn^{1/2-\epsilon}2^{-p}$.  Summing over $p$, we find: 
\[ |R_{kl}|\le Cn^{\eta}n^{1/2-\epsilon}l^{-1/4}k^{-1/4}(\sqrt{k}-\sqrt{l})^{-1}.\] 
Taking, say, $\eta = \epsilon/10$, we have established Proposition \ref{prop:R}.
\end{proof}

\section{From 1/2 to 1: Inductive arguments}
\label{sec:one}
In this section, we improve the semicircle law from the level of $n^{-1/2+\epsilon}$ to the optimal level of $n^{-1+\epsilon}$ by an inductive argument.

\begin{theorem} \label{thm:main}
Let $s(z)$ be the Stieltjes transform of the measure induced by the eigenvalues of the normalized Gaussian $\beta$-ensemble, $\frac{1}{\sqrt{n}}A_{n,\beta}$.  Let $s_{sc}(z)$ be the Stieltjes transform of the semicircle law.  Then with overwhelming probability:
\begin{equation}
\sup_{z\in D_{\epsilon,\delta}}|s(z)-s_{sc}(z)| = o(1)
\end{equation}
where for $\epsilon>0$ and $0<\delta<1$, the domain $D_{\epsilon, \delta}$ is defined as
\begin{displaymath}
D_{\epsilon,\delta} := \{z: \Im z > n^{-1+\epsilon}, -1+\delta < \Re z < 1-\delta\}.
\end{displaymath}
\end{theorem}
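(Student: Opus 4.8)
The plan is to bootstrap in the scale $\eta=\Im z$ over a short geometric sequence, with Proposition \ref{prop:1/2} as the base case. Fix $\epsilon$ and $\delta$, pick $\delta'\in(0,\delta)$, and set $\eta_j=2^{-j}n^{-1/2+\epsilon}$ for $0\le j\le J$, where $J$ is the least integer with $\eta_J\le n^{-1+\epsilon}$, so $J=O(\log n)$. Let $(\mathcal H_j)$ be the statement that, with overwhelming probability, $\sup|s(z)-s_{sc}(z)|=o(1)$ over the region $\{\eta_j\le\Im z\le\delta,\ -1+\delta'<\Re z<1-\delta'\}$, the $o(1)$ being a fixed sequence, the same for every $j$. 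Proposition \ref{prop:1/2} gives $(\mathcal H_0)$. Since $J=O(\log n)$, since each inductive step will fail only on an event of probability $O_C(n^{-C})$ for every $C$, and since $s$ and $s_{sc}$ are Lipschitz on the domain with constant $\le C(\Im z)^{-2}\le Cn^2$ (so that a deterministic $n^{-10}$-net of $z$-values suffices), a union bound over $j$ and over the net preserves overwhelming probability; as $\eta_J\le n^{-1+\epsilon}$, the statement $(\mathcal H_J)$ is the theorem.

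For the step $(\mathcal H_j)\Rightarrow(\mathcal H_{j+1})$, fix $z=E+i\eta$ with $\eta=\eta_{j+1}=\eta_j/2$. Two consequences of $(\mathcal H_j)$ feed the step. First, $\Im s(E+i\eta_j)=\Im s_{sc}(E+i\eta_j)+o(1)=O(1)$, so by monotonicity of $t\mapsto t\,\Im s(E+it)$ one gets $\Im s(E+i\eta)\le 2\,\Im s(E+i\eta_j)=O(1)$, and hence, via the Ward identity $\sum_l|R^\beta_{kl}(z)|^2=\eta^{-1}\Im R^\beta_{kk}(z)$, control of the $\ell^2$-norms of the rows of $R^\beta(z)$ at the new scale. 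Second, by the standard relation between $\Im s$ and the counting function (Lemma \ref{lem:tv64}), $(\mathcal H_j)$ gives that every bulk interval of length $\ge\eta_j$ contains $O(n\eta_j)$ eigenvalues of $\tfrac1{\sqrt n}A_{n,\beta}$. Using these together with Lemma \ref{thm:zerotemp} ($s_\infty(z)=s_{sc}(z)+O(n^{-1})$ down to $\Im z>n^{-1+\epsilon}$) and the resolvent identity $R^\beta=R^\infty-R^\infty\Delta R^\infty+R^\infty\Delta R^\infty\Delta R^\beta$, one reduces $s(z)-s_{sc}(z)$ to the fluctuation $-\tfrac1n\operatorname{tr}(R^\infty\Delta R^\infty)$ plus a remainder $\tfrac1n\operatorname{tr}(R^\infty\Delta R^\infty\Delta R^\beta)$, and estimates each by separating the deterministic factors $R^\infty$ (controlled by the estimates of Section \ref{sec:resolventest}, whose diagonal part, crucially, does not deteriorate below $n^{-1/2+\epsilon}$) from the randomness in $\Delta$ (independent, centred, of size $n^{-1/2}$, with subexponential tails) and in $R^\beta$ (controlled by Ward as above): the mean-zero independence of the bidiagonal $\Delta$ supplies the cancellation inside the traces, and the $\Delta$--$R^\beta$ correlation in the remainder is removed by an integration-by-parts/cumulant expansion in the near-Gaussian entries of $\Delta$ together with a martingale bound whose increments are estimated via Ward. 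Iterating the resolvent identity a bounded number of times (depending on $\epsilon$) yields a self-improving inequality and hence $(\mathcal H_{j+1})$.

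The hard part is the estimation of these traces at the \emph{target} scale $\eta\approx n^{-1+\epsilon}$: naive bounds on $\tfrac1n\operatorname{tr}(R^\infty\Delta R^\infty)$ and on the remainder, using only $\|R^\infty\|\le\eta^{-1}$, $\|\Delta\|_{\max}\lesssim n^{-1/2}$ and $\Im s=O(1)$, lose a power of $n$ as $\eta\downarrow n^{-1+\epsilon}$ — this is exactly why the resolvent \emph{series} of Section \ref{sec:onehalf} cannot be pushed below $n^{-1/2+\epsilon}$, and why the inductive feedback is needed at all. Recovering a genuine gain requires using more structure than the spectral norm of $R^\infty$: the band structure of $\Delta$ (only $O(n)$ nonzero entries), the scale-robustness and near-diagonal concentration of the entrywise bounds for $R^\infty$ from Section \ref{sec:resolventest}, a pointwise (not merely averaged) bound on the diagonal entries $\Im R^\beta_{kk}(z)$ obtained from the Ward identity, the a priori counting at scale $\eta_j$, and the tridiagonal recurrence, and the cancellation inside the trace. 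Arranging all of these so that the $\eta^{-1}$-type factors always appear paired with the small $\Delta$ or with a controlled $R^\beta$ is the crux; the remaining ingredients (the net argument, the union bound over the $O(\log n)$ scales) are routine.
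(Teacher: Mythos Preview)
Your proposal has a genuine gap: the inductive step is asserted, not proved. You correctly identify that pushing the resolvent expansion around $R^\infty$ below $\Im z\sim n^{-1/2}$ fails with naive bounds, and you list a number of ingredients (band structure of $\Delta$, Ward, a priori counting, tridiagonal recurrence, cancellation) that you hope will rescue it, but you never actually show how they combine to produce the required gain. In particular, your claim that the entrywise estimates of Section~\ref{sec:resolventest} ``do not deteriorate below $n^{-1/2+\epsilon}$'' is unjustified: Proposition~\ref{prop:R} is stated and proved only for $\Im z>n^{-1/2+\epsilon}$, and both the trivial bound $|R^\infty_{kk}|\le\eta^{-1}$ appearing in \eqref{eqn:Rdiag} and the final dyadic step in the off-diagonal estimate \eqref{eqn:Roffdiag} get worse as $\eta$ decreases. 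The ``integration-by-parts/cumulant expansion'' you invoke to decouple $\Delta$ from $R^\beta$ in the remainder is likewise a placeholder, not an argument.

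The paper does \emph{not} continue the expansion around $R^\infty$ in the induction; it switches mechanisms entirely. The inductive step (Propositions~\ref{prop:induct1} and~\ref{prop:induct2}) is based on the Schur complement formula for $R^\beta_{11}$ in terms of $\hat R_{11}$, the resolvent of the $(n-1)\times(n-1)$ minor, together with the structural fact (Proposition~\ref{prop:indep}) that for the tridiagonal model the first row of the eigenvector matrix is \emph{independent} of the eigenvalues and consists of normalized $\chi_\beta$ variables. This independence is what makes the concentration argument (McDiarmid) for $\sum_j(q_j^2-1/n)/(\lambda_j-z)$ go through, and it is the engine that converts a semicircle law at level $n^a$ into one at level $n^{(a-1)/2+\delta}$; a bounded number of such steps reaches $n^{-1+\epsilon}$. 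Your proposal never mentions Proposition~\ref{prop:indep} or the Schur complement, and your geometric halving $\eta_{j+1}=\eta_j/2$ with $J=O(\log n)$ steps is a different (and, as written, unsupported) bootstrap from the paper's exponent recursion $a\mapsto(a-1)/2$.
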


Before we start we will need two facts about the tridiagonal models and Stieltjes transforms.  The first can be found in \cite{D}:

\begin{proposition} \label{prop:indep}
The tridiagonal model $A_\beta$ can be diagonalized as $A = Q\Lambda Q^*$, such that the first row of $Q$ is independent of $\Lambda$ and consists of independent entries of $\chi_\beta$-distribution normalized to unit norm.
\end{proposition}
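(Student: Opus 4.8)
The plan is to reduce the statement to the inverse spectral theory of Jacobi matrices together with the change-of-variables computation underlying \cite{D} (and Dumitriu--Edelman). First I would record that, almost surely, $A_\beta = A_{n,\beta}$ is a Jacobi matrix with strictly positive off-diagonal entries (the $\chi$-variables are a.s.\ positive); consequently $e_1$ is cyclic for $A_\beta$, the spectrum $\lambda_1 < \cdots < \lambda_n$ is simple, and $\langle e_1, v_j\rangle \neq 0$ for every eigenvector $v_j$. Normalizing $\|v_j\| = 1$ and choosing signs so that $q_j := \langle e_1, v_j\rangle > 0$ pins down $Q = (v_1\mid\cdots\mid v_n)$ uniquely, and $q = (q_1,\dots,q_n)$ lies in $S_+^{n-1} := \{q: q_i > 0,\ \sum_i q_i^2 = 1\}$, the norm constraint being automatic since $\{v_j\}$ is an orthonormal basis. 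The classical fact, which is what lets us push the distribution forward, is that $A_\beta \mapsto (\Lambda, q)$, $\Lambda = (\lambda_1,\dots,\lambda_n)$, is a bijection from Jacobi matrices with positive off-diagonals onto $\{\lambda_1 < \cdots < \lambda_n\}\times S_+^{n-1}$, with inverse given by running the Lanczos recursion on the spectral measure $\mu = \sum_j q_j^2\,\delta_{\lambda_j}$ (equivalently, $A_\beta$ is the Jacobi matrix of the orthonormal polynomials of $\mu$).

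Next I would perform the change of variables from the $2n-1$ entries $(a_1,\dots,a_n,b_1,\dots,b_{n-1})$ of $A_\beta$ (diagonal and off-diagonal) to $(\lambda_1,\dots,\lambda_n,q_1,\dots,q_{n-1})$, with $q_n$ determined by normalization. From the tridiagonal model the $a_i$ are independent $N(0,1/\beta)$ and $b_j = \chi_{(n-j)\beta}/\sqrt{2\beta}$ are independent, so the joint density of the entries is proportional to $\prod_{j=1}^{n-1} b_j^{(n-j)\beta - 1}\exp(-c\,\mathrm{tr}\,A_\beta^2)$ for an explicit constant $c$; and $\mathrm{tr}\,A_\beta^2 = \sum_i\lambda_i^2$, so this exponential is already a function of $\Lambda$ alone. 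For the rest one combines the Dumitriu--Edelman Jacobian $\partial(a,b)/\partial(\lambda,q)$ with the Jacobi-matrix identity $\prod_{j=1}^{n-1} b_j^{\,n-j} = \big(\prod_{i} q_i\big)\prod_{i<j}|\lambda_i - \lambda_j|$, obtained from the residues of $\sum_j q_j^2/(\lambda_j - z) = \det(A_\beta^{(1)} - z)/\det(A_\beta - z)$, $A_\beta^{(1)}$ the first principal submatrix. Splitting $\prod_j b_j^{(n-j)\beta - 1} = \big(\prod_j b_j^{\,n-j}\big)^{\beta}\prod_j b_j^{-1}$, the first block produces $\big(\prod_i q_i\big)^{\beta}\prod_{i<j}|\lambda_i - \lambda_j|^{\beta}$, and $\prod_j b_j^{-1}$ together with the Jacobian collapses to a constant multiple of $\prod_i q_i^{-1}$. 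The outcome is that the joint law of $(\Lambda, q)$, with respect to Lebesgue measure in $\lambda$ and the natural measure on $S_+^{n-1}$, has the product form
\[
p(\Lambda, q) = c_{n,\beta}\, h_{n,\beta}(\Lambda)\,\prod_{i=1}^n q_i^{\,\beta - 1},
\]
where $h_{n,\beta}$ is proportional to the $\beta$-ensemble density \eqref{eqn:beta}.

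The two assertions now follow by inspection. The factorization $p(\Lambda,q) = f(\Lambda)g(q)$ is exactly the independence of $q$ and $\Lambda$; that the $\Lambda$-marginal is the $\beta$-ensemble is the quoted Dumitriu--Edelman theorem and serves as a consistency check on the computation. For the law of $q$, note that $g(q)\propto\prod_i q_i^{\beta-1}$ on $S_+^{n-1}$ is precisely the distribution of $X/\|X\|$ when $X = (X_1,\dots,X_n)$ has i.i.d.\ $\chi_\beta$ coordinates: writing $X = r\omega$ with $r = \|X\|$, $\omega \in S_+^{n-1}$, one has $\prod_i X_i^{\beta - 1}e^{-\|X\|^2/2} = r^{\,n\beta - n}\big(\prod_i\omega_i^{\beta-1}\big)e^{-r^2/2}$ and $\prod_i dX_i = r^{\,n-1}\,dr\,d\sigma(\omega)$, so the angular part carries density $\propto\prod_i\omega_i^{\beta-1}$ and is independent of the radius. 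This is the statement that the first row of $Q$ consists of independent $\chi_\beta$ entries normalized to unit norm.

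The technical heart of the argument is the Jacobian computation, which carries all the combinatorial content; since it is done in \cite{D} and in Dumitriu--Edelman, the proof here can simply cite it, and the only extra bookkeeping — the restriction to the positive orthant $S_+^{n-1}$ and the sign conventions for the eigenvectors — is routine. The one further point needing a line of justification is that the off-diagonal entries are a.s.\ positive and the eigenvalues a.s.\ distinct, so that the bijection and the change of variables are valid almost everywhere; both are immediate from the continuous, everywhere-positive densities of the $\chi$-variables.
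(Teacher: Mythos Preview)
Your proposal is correct and is precisely the Dumitriu--Edelman change-of-variables argument that the paper is invoking: the paper itself does not give a proof of this proposition but simply cites \cite{D}, and what you have written is a faithful sketch of the proof found there. The only remark is that the heavy lifting (the Jacobian identity and the product formula $\prod_j b_j^{\,n-j} = \prod_i q_i \cdot \prod_{i<j}|\lambda_i-\lambda_j|$) is, as you acknowledge, taken from that reference, so your write-up is really an expanded version of the same citation rather than an independent route.
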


The second lemma establishes the link between control of Stieltjes transform and control of distribution of eigenvalues and can be found in \cite{TV}:
\begin{lemma} \label{lem:tv64}
Let $1/10 \geq \eta \geq 1/n$ and $L,\epsilon,\tau > 0$.  Suppose that one has the bound
\begin{displaymath}
|s_n(z) - s(z)| \leq \tau
\end{displaymath}
with overwhelming probability for all $z$ with $|\Re z|<L$ and $\Im z > \eta$.  Then for any interval $I$ in $[-L+\epsilon,L-\epsilon]$ with $|I|\geq \max(2\eta,\frac{\eta}{\tau}\log\frac{1}{\tau}$), one has
\begin{displaymath}
\left|N_I - n\int_I \rho_{sc}(y) dy\right| <_\epsilon \tau n|I|
\end{displaymath}
with overwhelming probability, where $N_I$ denotes the number of eigenvalues in $I$.
\end{lemma}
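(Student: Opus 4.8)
The plan is to extract from the tridiagonal structure an approximate self-consistent equation for $s_\beta$ and then run a short induction on the scale $\eta=\Im z$, with Proposition \ref{prop:1/2} as the seed. Let $A$ be the normalized Gaussian $\beta$-ensemble (tridiagonal) matrix, $R(z)=(A-z)^{-1}$, $s_\beta=\tfrac1n\operatorname{tr}R$, and let $a_1,b_1$ denote the $(1,1)$ and $(1,2)$ entries of $A$. The Schur complement at the first coordinate gives $R_{11}(z)=(a_1-z-b_1^2 R^{(1)}_{11}(z))^{-1}$, where $R^{(1)}$ is the resolvent of the lower-right $(n-1)\times(n-1)$ block, itself (a scaling of) a Gaussian $\beta$-ensemble Jacobi matrix. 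With overwhelming probability $|a_1|\le n^{-1/2+\epsilon'}$ and $|b_1^2-\gamma|\le n^{-1/2+\epsilon'}$, where $\gamma=\lim_n\mathbb E b_1^2$ is precisely the constant for which $s_{sc}$ obeys $\gamma s_{sc}^2+zs_{sc}+1=0$. The decisive input is Proposition \ref{prop:indep}: since the first row of the eigenvector matrix is independent of the spectrum and has Dirichlet squared entries, $\mathbb E[R_{11}(z)\mid\Lambda]=s_\beta(z)$ and $\operatorname{Var}(R_{11}(z)\mid\Lambda)\lesssim \Im s_\beta(z)/(n\Im z)$ (the negative correlations of the Dirichlet weights only help); as those weights are bounded, a Bernstein-type bound gives, uniformly and with overwhelming probability, $|R_{11}(z)-s_\beta(z)|\le n^{\epsilon'}(\Im s_\beta(z)/(n\Im z))^{1/2}$. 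Applying the same to the $(n-1)$-block, and using the elementary submatrix identity $\operatorname{tr}R-\operatorname{tr}R^{(1)}=[R^2]_{11}/R_{11}$, which (since $|[R^2]_{11}|\le \Im R_{11}/\Im z$ and $\Im R_{11}\le|R_{11}|$) yields $|s_\beta-\tfrac{1}{n-1}\operatorname{tr}R^{(1)}|\le C/(n\Im z)$, one obtains $|R^{(1)}_{11}(z)-s_\beta(z)|\le n^{\epsilon'}(\Im s_\beta(z)/(n\Im z))^{1/2}+C/(n\Im z)$. Substituting into the Schur identity gives $\gamma s_\beta(z)^2+zs_\beta(z)+1=\mathcal E(z)$ with $|\mathcal E(z)|\lesssim n^{-1/2+\epsilon'}+n^{\epsilon'}(\Im s_\beta(z)/(n\Im z))^{1/2}+(n\Im z)^{-1}$, uniformly on the domain and with overwhelming probability.

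Now the induction, over a geometric sequence of scales. Set $\eta_0=n^{-1/2+\epsilon}$, $\eta_k=2^{-k}\eta_0$, and $K=O(\log n)$ so that $\eta_K\le n^{-1+\epsilon}$. Proposition \ref{prop:1/2} gives $\sup_E|s_\beta(E+i\eta_0)-s_{sc}(E+i\eta_0)|=o(1)$, hence $\Im s_\beta(E+i\eta_0)\le C$, with overwhelming probability; assume inductively the same at $\eta_k$. For $\eta\in[\eta_{k+1},\eta_k]$ the elementary monotonicity of $\eta\mapsto\eta\,\Im s_\beta(E+i\eta)=\tfrac1n\sum_j\eta^2/((\lambda_j-E)^2+\eta^2)$ gives $\Im s_\beta(E+i\eta)\le(\eta_k/\eta)\Im s_\beta(E+i\eta_k)\le 2C$; this bounded a priori control makes $|\mathcal E(z)|=o(1)$ uniformly on the slab, provided $\epsilon'<\epsilon/2$ (using $\eta_{k+1}\ge\tfrac12 n^{-1+\epsilon}$). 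Fixing $E$, let $s_\pm(E+i\eta)$ be the two roots of $\gamma w^2+(E+i\eta)w+1=0$; in the bulk $|s_+-s_-|\ge c_\delta>0$. The set of $\eta\in[\eta_{k+1},\eta_k]$ with $|s_\beta(E+i\eta)-s_+(E+i\eta)|<c_\delta/2$ contains $\eta_k$, is open by continuity in $\eta$, and is closed (any limit point lies within $o(1)$ of $s_+$ or $s_-$ by the approximate equation, and being within $c_\delta/2$ of $s_+$ it cannot be near $s_-$); hence it is the whole slab. On it $s_\beta$ sits in the $s_+$-basin, so stability of the perturbed quadratic gives $|s_\beta(E+i\eta)-s_{sc}(E+i\eta)|\le C_\delta|\mathcal E(z)|=o(1)$. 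Taking $\eta=\eta_{k+1}$ closes the induction; after $K$ steps — a union of overwhelming-probability events — one gets $|s_\beta(z)-s_{sc}(z)|=o(1)$ for $\Im z\in[\eta_K,\eta_0]$, $-1+\delta<\Re z<1-\delta$. Together with Proposition \ref{prop:1/2} for $n^{-1/2+\epsilon}\le\Im z<\delta$, the global semicircle law (or the trivial estimate) for $\Im z\ge\delta$, and a routine Lipschitz-in-$z$ net and union bound, this is Theorem \ref{thm:main}; Lemma \ref{lem:tv64} then delivers the eigenvalue-counting estimates behind the Corollary.

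The main obstacle is pushing the self-consistent equation all the way to $\Im z=n^{-1+\epsilon}$: the concentration error $n^{\epsilon'}(\Im s_\beta/(n\Im z))^{1/2}$ supplied by Proposition \ref{prop:indep} is $o(1)$ only because the a priori bound $\Im s_\beta=O(1)$ is available — and that bound is obtained not by itself but propagated scale-by-scale through the monotonicity of $\eta\,\Im s_\beta$ from the already-handled larger scale — and only in the range $\Im z\gg n^{-1}$. Equally indispensable is the bulk condition $-1+\delta<\Re z<1-\delta$: it keeps the two roots $s_\pm$ separated by $c_\delta>0$, which is exactly what makes the stability bound $|s_\beta-s_{sc}|\le C_\delta|\mathcal E|$ hold with an $n$-independent constant; this degenerates at the edge. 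A secondary technical point is the uniform, overwhelming-probability control of $R^{(1)}_{11}-s_\beta$: it needs Proposition \ref{prop:indep} for the $(n-1)$-dimensional sub-ensemble together with the submatrix-trace identity, and uses that deleting a row and column changes $\operatorname{tr}R$ by at most $1/\Im z$, which is negligible after the factor $1/n$ precisely when $\Im z\gg n^{-1}$.
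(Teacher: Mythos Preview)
Your proposal does not address the stated lemma at all. Lemma \ref{lem:tv64} is the general fact that uniform control of $|s_n(z)-s_{sc}(z)|\le\tau$ for $\Im z>\eta$ implies the eigenvalue-counting estimate $|N_I-n\int_I\rho_{sc}|\lesssim_\epsilon \tau n|I|$; its proof has nothing to do with the tridiagonal structure, Schur complements, or $\beta$-ensembles. It is a purely analytic statement about measures and their Stieltjes transforms, and the paper does not prove it either---it is quoted from \cite{TV}. A proof would proceed, for instance, by writing $N_I$ as an integral of $\Im s_n$ against a smoothed indicator (or via a Helffer--Sj\"ostrand representation), and estimating the error coming from the smoothing at scale $\eta$ and the tail contributions using the $|\Re z|<L$ hypothesis.

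What you have actually written is a proof of Theorem \ref{thm:main}. Read as such, your argument is essentially the same as the paper's Propositions \ref{prop:induct1} and \ref{prop:induct2}: Schur complement at the $(1,1)$ entry, concentration of $R_{11}$ around $s_\beta$ via Proposition \ref{prop:indep}, comparison of $\operatorname{tr}R$ and $\operatorname{tr}R^{(1)}$, and then an approximate fixed-point equation $\gamma s_\beta^2+zs_\beta+1=\mathcal E$ whose stability in the bulk closes an induction on the scale. The differences are cosmetic: you run a dyadic bootstrap in $\eta$ and feed in the a priori bound $\Im s_\beta=O(1)$ through the monotonicity of $\eta\,\Im s_\beta$, while the paper iterates on the exponent via $a\mapsto(a-1)/2$ and uses Lemma \ref{lem:absest} (itself a consequence of Lemma \ref{lem:tv64}) to control $\sum_j|\lambda_j-z|^{-2}$; you phrase the concentration as a Bernstein-type bound for Dirichlet weights, the paper uses its McDiarmid variant in the appendix. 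Both routes are standard and lead to the same conclusion. But none of this is a proof of Lemma \ref{lem:tv64}; indeed, your last sentence invokes Lemma \ref{lem:tv64} as an input to reach the Corollary, confirming that you have not proved it.
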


Let us introduce some terminology: we will say the \emph{semicircle law holds at level} $\alpha$ if, for $z \in D_{1-\alpha, \delta}$, and any $c>0$
\[|s_\beta(z)-s_{sc}(z)|<c\]
for sufficiently large $n$, with overwhelming probability. We first prove the following simple lemma:
\begin{lemma} \label{lem:absest}
Suppose the semicircle law holds at level $\Im z > n^a$ (or equivalently by lemma \ref{lem:tv64} the Stietljes transform is close to that of the semicircle law at the level $n^a$) for some $-1 < a < 0$, then we have
\begin{equation}
\frac{1}{n}\sum_j \frac{1}{|\lambda_j-z|^2} \leq C (\Im z)^{-2}n^a \log n
\end{equation}
for any $z$ such that $n^{-1} < \Im z < n^a$.
\end{lemma}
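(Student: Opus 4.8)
The plan is to reduce the sum to an imaginary part of a Stieltjes transform and then transfer control from the scale $n^a$ down to the scale $\eta=\Im z$ by a pointwise monotonicity. First I record the identity
\[
\frac1n\sum_j\frac{1}{|\lambda_j-z|^2}=\frac1n\sum_j\frac{1}{(\lambda_j-E)^2+\eta^2}=\frac{\Im s_n(z)}{\eta},\qquad z=E+i\eta,
\]
so it suffices to show $\Im s_n(E+i\eta)\le C\,\eta^{-1}n^a\log n$. The elementary inequality $\dfrac{1}{x^2+\eta^2}\le\dfrac{\eta_0^2}{\eta^2}\cdot\dfrac{1}{x^2+\eta_0^2}$, valid for all real $x$ and all $0<\eta\le\eta_0$ (cross-multiplying, it reduces to $\eta^2x^2\le\eta_0^2x^2$), applied termwise with $x=\lambda_j-E$ gives
\[
\frac1n\sum_j\frac{1}{(\lambda_j-E)^2+\eta^2}\le\frac{\eta_0^2}{\eta^2}\cdot\frac1n\sum_j\frac{1}{(\lambda_j-E)^2+\eta_0^2}=\frac{\eta_0}{\eta^2}\,\Im s_n(E+i\eta_0).
\]

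Next I would pick $\eta_0$ just above the threshold, e.g.\ $\eta_0=2n^a$, so that $E+i\eta_0$ lies in the region where the semicircle law is assumed to hold. By hypothesis, with overwhelming probability $|s_n(E+i\eta_0)-s_{sc}(E+i\eta_0)|<c$; moreover $\Im s_{sc}(E+i\eta_0)=\pi(P_{\eta_0}\ast\rho_{sc})(E)$ is bounded by an absolute constant since $\rho_{sc}\le 2/\pi$. Hence $\Im s_n(E+i\eta_0)\le C$ with overwhelming probability, uniformly in $E\in(-1+\delta,1-\delta)$. Combining this with the two displays above,
\[
\frac1n\sum_j\frac{1}{|\lambda_j-z|^2}=\frac{\Im s_n(E+i\eta)}{\eta}\le\frac{C\eta_0}{\eta^2}\le\frac{C\,n^a\log n}{(\Im z)^2},
\]
uniformly over $n^{-1}<\Im z<n^a$ with $\Re z$ in the bulk (upgrading the pointwise bound at $\eta_0$ to uniformity in $z$ by a standard net argument using Lipschitz continuity of $s_n$). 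This is the assertion; the $\log n$ is in fact slack, kept only for uniformity with the companion bound $\sum_j|\lambda_j-z|^{-1}\le Cn\log n$.

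There is no real obstacle here. The only points needing a little care are the pointwise monotonicity inequality, which is the mechanism that pushes control from scale $\eta_0\approx n^a$ down to all smaller scales at the cost of the factor $\eta_0/\eta$, and the harmless fact that the hypothesis must be invoked at a scale strictly above $n^a$, which forces the choice $\eta_0=2n^a$ (or any fixed multiple). An alternative, slightly more wasteful route is a dyadic decomposition of the eigenvalues according to $|\lambda_j-E|$ together with the counting estimate of Lemma \ref{lem:tv64}; it yields the same bound, but the monotonicity argument is cleaner.
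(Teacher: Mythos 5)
Your proof is correct, and the central mechanism is the same one the paper uses: transfer control from the scale $\eta_0\approx n^a$, where the hypothesis applies, down to $\eta=\Im z$ at the cost of a factor $\eta_0/\eta$ via a pointwise monotonicity in the imaginary part. Where you diverge is in how the sum at scale $n^a$ is then controlled. The paper first reduces the squared sum to the first-power sum $\frac{1}{n}\sum_j|\lambda_j-z|^{-1}$ via the trivial bound $|\lambda_j-z|^{-1}\le(\Im z)^{-1}$, and then bounds that first-power sum by a dyadic decomposition of the eigenvalues around $E=\Re z$ combined with the counting estimate of Lemma \ref{lem:tv64}; the $\log n$ is exactly the number $\log\eta^{-1}$ of dyadic scales. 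You instead identify $\frac{1}{n}\sum_j|\lambda_j-z|^{-2}$ with $\eta^{-1}\Im s_n(z)$ and read the bound $\Im s_n(E+i\eta_0)\le C$ directly off the hypothesis, which is cleaner and shows the $\log n$ is slack for the stated inequality. The one thing your route does not deliver is the intermediate first-power bound (\ref{eqn:absres}), $\frac{1}{n}\sum_j|\lambda_j-z|^{-1}\le C(\Im z)^{-1}n^a\log n$, which the paper's proof establishes along the way and which is invoked separately elsewhere (in the proofs of Proposition \ref{prop:R} and Proposition \ref{prop:induct1}); Cauchy--Schwarz applied to your squared bound only yields the weaker $(\Im z)^{-1}n^{a/2}$. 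For the lemma as stated, your argument is complete, modulo the same implicit restriction of $\Re z$ to the bulk that the paper's own proof also relies on.
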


\begin{proof}
We will in fact show the following inequality:
\begin{equation} \label{eqn:absres}
\frac{1}{n}\sum_j \frac{1}{|\lambda_j-z|} \leq C (\Im z)^{-1}n^a \log n
\end{equation}
The lemma then follows from the trivial bound:
\[\frac{1}{|\lambda_j-z|} \leq (\Im z)^{-1},\]
holding for all $j$.
Suppose the semicircle law holds at level $n^{-1} < \eta < 1/10$, that is:
\[|s(z) - s_{sc}(z)| = o(1)\]
for any $z$ such that $\Im z = \eta$.  First notice that we have the following trivial bound for any $w$
\[
\sum_j \frac{1}{|\lambda_j-w|} \leq \frac{\eta}{\Im w} \sum_j \frac{1}{|\lambda_j-z|},
\]
where $\Im z = \eta$.  It remains to show that
\[\sum_j\frac{1}{|\lambda_j-z|} \leq n \log n\]
for such $z$.
Let $E = \Re z$, and consider a dyadic partition around $E$:
\begin{displaymath}
U_p := \{j: 2^{p-1}\eta \leq |\lambda_j-E| \leq 2^p\eta\} 
\end{displaymath} 
for $p = 1, \hdots, \log \eta^{-1}$
\begin{displaymath}
U_0 := \{j: |\lambda_j-E| \leq \eta\};
\end{displaymath}
\begin{displaymath}
U_\infty:= \{j: |\lambda_j-E| > 1\}; 
\end{displaymath}
By lemma \ref{lem:tv64}, $|U_p| \leq 2^pn\eta$ and over the set $U_p$, $\frac{1}{|\lambda_j-E|} \leq 2^{1-p}\eta^{-1}$, thus
\begin{displaymath}
\sum_{j\in U_p}\frac{1}{|\lambda_j-E|} \leq 2n.
\end{displaymath}
Summing over $n$, we obtain the desired bound after noting that $\log \eta^{-1} \leq \log n$.
\end{proof}

The proof of the theorem relies on the next two propositions.
\begin{proposition} \label{prop:induct1}
Suppose the semicircle law holds at level $a$ for some $-1 < a < 0$, then 
\[
|R^\beta_{11}(z) - s_{sc}(z)| = o(1)
\]
for $z$ such that $\Im z > n^{(a-1)/2+\delta}$ for any $\delta > 0$ with overwhelming probability.
\end{proposition}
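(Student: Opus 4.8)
The plan is to write $R^\beta_{11}$ as a spectral sum with weights independent of the spectrum (Proposition \ref{prop:indep}), show that this sum concentrates around the Stieltjes transform $s_\beta$ of $\tfrac1{\sqrt n}A_{n,\beta}$, then extract a self-consistent equation for $R^\beta_{11}$ from one Schur complement step and finish by a continuity argument anchored at scale $n^a$. By Proposition \ref{prop:indep}, $R^\beta_{11}(z)=\sum_{j=1}^{n}\frac{w_j}{\lambda_j-z}$, where $\lambda_j$ are the eigenvalues of $\tfrac1{\sqrt n}A_{n,\beta}$ and $(w_1,\dots,w_n)$ is a Dirichlet vector (i.i.d.\ $\chi_\beta$-squares normalized to sum $1$), independent of $(\lambda_j)$, with $\mathbb{E}w_j=1/n$, $\operatorname{Var}w_j=O(n^{-2})$ and $\operatorname{Cov}(w_j,w_k)=O(n^{-3})$; the same holds for $\tilde R(z):=[(\tfrac1{\sqrt n}A'-z)^{-1}]_{11}$, where $A'$ is the minor obtained by deleting the first row and column, with spectrum $(\mu_m)$ and Stieltjes transform $s_{\tilde\beta}$. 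The first step is the estimate $R^\beta_{11}(z)=s_\beta(z)+O(n^{-c})$ and $\tilde R(z)=s_{\tilde\beta}(z)+O(n^{-c})$ with overwhelming probability, uniformly in the region of the statement. Conditioning on the eigenvalues, the difference $R^\beta_{11}(z)-s_\beta(z)=\sum_j(w_j-\tfrac1n)(\lambda_j-z)^{-1}$ has conditional second moment $\lesssim n^{-2}\sum_j|\lambda_j-z|^{-2}=n^{-1}\cdot\tfrac1n\sum_j|\lambda_j-z|^{-2}$, which by Lemma \ref{lem:absest} (valid on the overwhelming-probability event that the semicircle law holds at level $a$) is $\lesssim(\Im z)^{-2}n^{a-1}\log n$; this is $o(1)$ precisely when $\Im z\gg n^{(a-1)/2}$, which is the source of the exponent $(a-1)/2+\delta$. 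To pass from the second moment to overwhelming probability, write $w_j-\tfrac1n=S^{-1}(q_j^2-S/n)$ with $S=\sum_k q_k^2$ and apply a Bernstein bound to the linear statistic $\sum_j(q_j^2-\beta)(\lambda_j-z)^{-1}$ together with the concentration of $S$ around $n\beta$; uniformity in $z$ follows from a polynomial net and $|\partial_z R^\beta_{11}|\le(\Im z)^{-2}$. The same argument handles $\tilde R$, using the analogous a priori bounds for the $\mu_m$'s (which follow since $A'$ is again a Gaussian $\beta$-ensemble matrix of size $n-1$, or directly by interlacing).

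For the self-consistent equation, Schur complementation on $\tfrac1{\sqrt n}A_{n,\beta}$ gives $R^\beta_{11}(z)=\bigl(b_1-z-a_1^2\,\tilde R(z)\bigr)^{-1}$, where $b_1=O(n^{-1/2+\epsilon})$ and $a_1^2$ concentrates around a positive constant $\sigma^2$ with fluctuation $O(n^{-1/2+\epsilon})$ — $\sigma^2$ being exactly the constant for which $m\mapsto(-z-\sigma^2 m)^{-1}$ has $s_{sc}$ as its relevant fixed point, as one checks by specializing to the deterministic case of Lemma \ref{thm:zerotemp}. To close the equation one replaces $\tilde R$ by $R^\beta_{11}$: the exact identity $n s_\beta(z)-(n-1)s_{\tilde\beta}(z)=(R^\beta_{11})'(z)/R^\beta_{11}(z)$, coming from $R^\beta_{11}=\det(A'-z)/\det(\tfrac1{\sqrt n}A_{n,\beta}-z)$, combined with the concentration bound and the a priori estimate $|(R^\beta_{11})'|\lesssim(\Im z)^{-2}n^a\log n$, yields $s_{\tilde\beta}=s_\beta+o(1)$, hence $\tilde R=R^\beta_{11}+o(1)$, as soon as $|R^\beta_{11}(z)|\gtrsim_\delta 1$. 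On the set where that lower bound holds, $R^\beta_{11}$ therefore satisfies $\sigma^2(R^\beta_{11})^2+z R^\beta_{11}+1=o(1)$; the two roots of this quadratic are $s_{sc}(z)$ and a second root $\hat s(z)$, which stay a $\delta$-dependent distance apart throughout the region. Finally, at $\Im z\ge n^a$ the concentration bound and the hypothesis give $R^\beta_{11}=s_{sc}+o(1)$, in particular $|R^\beta_{11}|\gtrsim_\delta 1$; decreasing $\Im z$ and using analyticity of $R^\beta_{11}$ in $z$, the quadratic relation forces $R^\beta_{11}$ to remain within $o(1)$ of $s_{sc}$ (it cannot jump to $\hat s$), and the error in fact improves, so the good region extends all the way to $\Im z=n^{(a-1)/2+\delta}$, with overwhelming probability.

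The main obstacle is the descent below scale $n^a$, where the level-$a$ hypothesis gives no direct control of $s_\beta$: the self-consistent equation is then genuinely needed, and its delicate ingredient is the handling of the minor's corner resolvent $\tilde R$ — concretely, maintaining the lower bound $|R^\beta_{11}|\gtrsim_\delta 1$ inside the bootstrap region so that the correction term $(R^\beta_{11})'/R^\beta_{11}$ in the identity relating $s_\beta$ to $s_{\tilde\beta}$ stays negligible. The concentration step is otherwise routine, but it must be run with the sharp power of $n$, since that is what pins down the threshold $(a-1)/2+\delta$.
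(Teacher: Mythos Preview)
Your proposal is correct and follows essentially the same route as the paper: Schur complement plus the Dirichlet structure of the first row of eigenvectors (Proposition \ref{prop:indep}) and Lemma \ref{lem:absest} to obtain a self-consistent equation for $R^\beta_{11}$, then the standard stability/continuity argument to pick out $s_{sc}$. The paper uses a modified McDiarmid inequality where you use Bernstein, and invokes Cauchy interlacing where you write out the log-derivative identity $n s_\beta-(n-1)s_{\tilde\beta}=(R^\beta_{11})'/R^\beta_{11}$, but these are equivalent moves.

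The one place you work harder than necessary is the comparison of $s_\beta$ with $s_{\tilde\beta}$. You bound numerator and denominator of $(R^\beta_{11})'/R^\beta_{11}$ separately, which forces you to maintain $|R^\beta_{11}|\gtrsim_\delta 1$ through a bootstrap and to treat this as ``the main obstacle.'' In fact your own identity gives the bound for free: since $R^\beta_{11}(z)=\sum_j w_j(\lambda_j-z)^{-1}$ with $w_j\ge 0$, one has $|(R^\beta_{11})'|\le \sum_j w_j|\lambda_j-z|^{-2}=(\Im z)^{-1}\Im R^\beta_{11}\le (\Im z)^{-1}|R^\beta_{11}|$, so $|(R^\beta_{11})'/R^\beta_{11}|\le 1/\Im z$ unconditionally. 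This is exactly the interlacing bound $|s_\beta-\tfrac{n-1}{n}s_{\tilde\beta}|\le \pi/(n\Im z)$ the paper uses; with it, the lower bound on $|R^\beta_{11}|$ is not needed to close the self-consistent equation, and the continuity argument is required only for the routine branch selection at the end.
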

\begin{proposition} \label{prop:induct2}
Suppose $|R^\beta_{11}(z) - s_{sc}(z)| = o(1)$ for $z$ such that  $\Im z > n^a$ for some $-1 < a < 0$, then we have an improved semicircle law, i.e.
\[
\left|\frac{1}{n} \sum_j \frac{1}{\lambda_j-z} - s_{sc}(z)\right| = o(1)
\]
for $z$ such that $\Im z > n^{(a-1)/2+\delta}$ for any $\delta > 0$ with overwhelming probability.
\end{proposition}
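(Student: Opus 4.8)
The plan is to use Proposition \ref{prop:indep} to realise $R^\beta_{11}(z)$ as a spectral average of $\{(\lambda_j-z)^{-1}\}$ against random weights that are independent of the $\lambda_j$'s and sharply concentrated around the uniform weights $1/n$, and then to transfer the semicircle law from $R^\beta_{11}$ to $s_\beta(z)=\frac1n\sum_j(\lambda_j-z)^{-1}$. Writing $A_{n,\beta}=Q\Lambda Q^{\ast}$ as in Proposition \ref{prop:indep}, one has $R^\beta_{11}(z)=\sum_j w_j(\lambda_j-z)^{-1}$ with $w_j=Q_{1j}^2\ge 0$, $\sum_j w_j=1$, $\mathbb E w_j=1/n$, and $w=(w_j)$ independent of $\Lambda$. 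Hence
\[
D(z):=R^\beta_{11}(z)-s_\beta(z)=\sum_{j}\Bigl(w_j-\tfrac1n\Bigr)\frac{1}{\lambda_j-z},
\]
and the task reduces to showing $|D(z)|=o(1)$, with overwhelming probability, for $\Im z>n^{(a-1)/2+\delta}$, and then upgrading the hypothesis on $R^\beta_{11}$ from level $n^a$ to that finer scale.

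First I would establish the a priori bound $\frac1n\sum_j|\lambda_j-z|^{-2}\le C(\Im z)^{-1}$ for $\Im z>n^a$ on a $\Lambda$-measurable event of overwhelming probability. Conditionally on $\Lambda$ the vector $w$ is Dirichlet$(\beta/2,\dots,\beta/2)$, so $\operatorname{Var}(w_j)=O(n^{-2})$ and $\operatorname{Cov}(w_i,w_j)=O(n^{-3})$; expanding $\mathbb E[|D(z)|^2\mid\Lambda]$ and using Cauchy--Schwarz on the off-diagonal part gives $\mathbb E[|D(z)|^2\mid\Lambda]\le \frac{C}{n}\cdot\frac1n\sum_j|\lambda_j-z|^{-2}$. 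On the event of the hypothesis, $\Im R^\beta_{11}(z)=\Im z\sum_j w_j|\lambda_j-z|^{-2}$ is bounded for $\Im z>n^a$; writing $\frac1n\sum_j|\lambda_j-z|^{-2}=(\Im z)^{-1}\bigl(\Im R^\beta_{11}(z)-\Im D(z)\bigr)$ and inserting the second-moment bound produces a self-improving inequality for the $\Lambda$-measurable quantity $T(z)=\frac1n\sum_j|\lambda_j-z|^{-2}$, which closes to $T(z)\le C(\Im z)^{-1}$. Since $\eta^2|\lambda-E-i\eta|^{-2}$ is nondecreasing in $\eta$, this propagates to $T(z)\le Cn^a(\Im z)^{-2}$ for $n^{-1}<\Im z<n^a$, i.e.\ the conclusion of Lemma \ref{lem:absest} is recovered directly from the $R^\beta_{11}$ hypothesis.

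Next, fixing $\Lambda$ in the good set and writing $w_j=g_j^2/\lVert g\rVert^2$ with $g_j$ i.i.d.\ $\chi_\beta$, I would split $D(z)=\lVert g\rVert^{-2}\sum_j(g_j^2-\beta)(\lambda_j-z)^{-1}+\beta\bigl(\lVert g\rVert^{-2}-\tfrac1{n\beta}\bigr)\sum_j(\lambda_j-z)^{-1}$. The first sum runs over independent centred sub-exponential variables with coefficients of size $O((\Im z)^{-1})$ and conditional variance $O(n^{-1}T(z))$; a Bernstein/Rosenthal moment estimate, together with $\max_j|g_j^2-\beta|\le C\log n$ with overwhelming probability, bounds it by $n^{\delta'}(nT(z))^{1/2}$ with overwhelming probability, so after dividing by $\lVert g\rVert^2\asymp n$ and using $T(z)\le Cn^a(\Im z)^{-2}$ its contribution is $\le Cn^{\delta'}n^{(a-1)/2}(\Im z)^{-1}=o(1)$ for $\Im z>n^{(a-1)/2+\delta}$ once $\delta'<\delta$; the normalisation term is smaller, as $\lvert\lVert g\rVert^2-n\beta\rvert=O(n^{1/2}\log n)$ and $\lvert\sum_j(\lambda_j-z)^{-1}\rvert\le n\,T(z)^{1/2}$ by Cauchy--Schwarz. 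Thus $|D(z)|=o(1)$ for $\Im z>n^{(a-1)/2+\delta}$. Combining with the hypothesis, $|s_\beta(z)-s_{sc}(z)|\le|D(z)|+|R^\beta_{11}(z)-s_{sc}(z)|=o(1)$ for $\Im z>n^a$, which is the semicircle law at level $n^a$; feeding this into Proposition \ref{prop:induct1} gives $|R^\beta_{11}(z)-s_{sc}(z)|=o(1)$ on $\Im z>n^{(a-1)/2+\delta}$, and adding $|D(z)|=o(1)$ on the same range finishes the argument.

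The main obstacle is the step promoting the conditional second-moment bound on $D(z)$ to overwhelming probability: the summands $(\lambda_j-z)^{-1}$ can be as large as $(\Im z)^{-1}\sim n^{(1-a)/2}$, so the relevant high moments of $D(z)$ are summable only after the a priori $\ell^2$ estimate has been inserted --- and that estimate must itself be extracted self-consistently from the hypothesis on $R^\beta_{11}$, with careful bookkeeping of which events are $\Lambda$-measurable. Once this is in place, the remaining manipulations (the monotonicity extension in $\Im z$, the Bernstein bound, the treatment of the $\lVert g\rVert^2$-correction, the passage to uniformity in $z$ through a net, and the final invocation of Proposition \ref{prop:induct1}) are routine.
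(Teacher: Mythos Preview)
Your argument follows the paper's: write $D(z)=R^\beta_{11}(z)-s_\beta(z)=\sum_j(q_j^2-\tfrac1n)(\lambda_j-z)^{-1}$ and bound it by concentration of measure (the paper uses its McDiarmid variant, you use a Bernstein/Rosenthal bound on the $\chi_\beta$ representation) against the $\ell^2$ quantity $T(z)=\frac1n\sum_j|\lambda_j-z|^{-2}$. You are, however, more careful than the paper in two places where its three-line proof is loose. First, the paper invokes Lemma~\ref{lem:absest} for $T(z)$, but that lemma assumes the semicircle law at level $a$, which is not the stated hypothesis of Proposition~\ref{prop:induct2}; you instead recover $T(z)\le Cn^a(\Im z)^{-2}$ directly from the $R^\beta_{11}$ bound via the identity $T(z)=(\Im z)^{-1}\Im s_\beta(z)$ and a bootstrap through the second moment of $D$. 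Second, the paper writes ``by the assumption it suffices to show $|s_\beta-R^\beta_{11}|=o(1)$'', but the assumption only controls $R^\beta_{11}$ on $\Im z>n^a$, not on the claimed range $\Im z>n^{(a-1)/2+\delta}$; you close this by first deducing the semicircle law at level $a$ and then feeding it into Proposition~\ref{prop:induct1} to push $R^\beta_{11}$ down to the target scale before combining with $|D(z)|=o(1)$. In the paper's overall iteration these gaps are harmless because Propositions~\ref{prop:induct1} and~\ref{prop:induct2} are always applied in tandem and each supplies what the other needs, but your version actually establishes the proposition as a standalone statement.
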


\begin{proof} [Proof of Theorem \ref{thm:main}]
By Proposition \ref{prop:1/2}, we have a local semicircle law at the level of $-1/2+\epsilon$.  Applying Propositions \ref{prop:induct1} and \ref{prop:induct2} repeatedly will yield the desired conclusion.
\end{proof}
What remains is the proof of the two propositions above:
\begin{proof} [Proof of Proposition \ref{prop:induct1}]
By Schur's complement, we have the following relationship
\[R^\beta_{11}(z) = \frac{1}{\frac{1}{\sqrt{n\beta}}a_{11}-z-\frac{1}{n\beta}b^2_{12}\hat{R}_{11}}\]
where $a_{11}$ is the $(1,1)$-entry of our (symmetric) tridiagonal matrix $A$, normally distributed with mean $0$ and variance $1$ and $b_{1,2}$ is the $(1,2)$-entry of $A$, which follows a $\chi$-distribution with $(n-1)\beta$ degrees of freedom.  $\hat{R}$ is the resolvent of the matrix obtained by removing the first row and first column of $A$. We will denote the latter matrix by $\hat{A}$.  

Firstly, notice that $a_{11}, b_{12}$ and $\hat{R}$ are independent, and that $\frac{1}{\sqrt{n\beta}}|a_{11}| < n^{-1/2+c}$ with overwhelming probability for any $c > 0$.  Thus, we have
\begin{equation} \label{eqn:denom}
\frac{1}{\sqrt{n\beta}}a_{11}-z-\frac{1}{n\beta}b^2_{12}\hat{R}_{11} = -z-\frac{1}{n\beta}\mathbb{E}_bb^2_{12}\hat{R}_{11} + \frac{1}{n\beta}(b^2_{12}-\mathbb{E}_bb^2_{12})\hat{R}_{11}+O(n^{-1/2+c}).
\end{equation}
For the second term on the right hand side, we have $\mathbb{E}_bb^2_{12} = (n-1)\beta$, so 
\[\frac{1}{n\beta}\mathbb{E}_bb^2_{12}\hat{R}_{11} = \frac{n-1}{n}\hat{R}_{11}.\]
Let $(\hat{q}_1,\hdots\hat{q}_{n-1})$ be the first row of the eigenvectors for $\hat{A}$, $\hat{\lambda}_j$ be the eigenvalues and write
\[\hat{R}_{11} = \sum_j \frac{\hat{q}_j^2}{\hat{\lambda}_j-z}.\]
Similarly,
\[R_{11} = \sum_j \frac{q_j^2}{\lambda_j-z}.\]
We would like to compare $R_{11}$ and $\hat{R}_{11}$, to this end we write
\begin{equation} \label{eqn:compare}
\begin{split}
R_{11}-\frac{n-1}{n}\hat{R}_{11} &= \sum_j\frac{q_j^2}{\lambda_j-z}-\mathbb{E}_q\sum_j\frac{q_j^2}{\lambda_j-z}\\  
&+ \mathbb{E}_q\sum_j\frac{q_j^2}{\lambda_j-z} - \frac{n-1}{n}\mathbb{E}_{\hat{q}}\sum_j\frac{\hat{q}_j^2}{\hat{\lambda}_j-z}\\
&+ \frac{n-1}{n}\left(\mathbb{E}_{\hat{q}}\sum_j\frac{\hat{q}_j^2}{\hat{\lambda}_j-z} - \sum_j\frac{\hat{q}_j^2}{\hat{\lambda}_j-z}\right).
\end{split}
\end{equation}
By proposition \ref{prop:indep}, $q$ and $\lambda_j$ are independent and so are $\hat{q}$ and $\hat{\lambda}_j$, 
\begin{displaymath}
 \sum_j\frac{q_j^2}{\lambda_j-z}-\mathbb{E}_q\sum_j\frac{q_j^2}{\lambda_j-z} = \sum_j\frac{q_j^2-\frac{1}{n}}{\lambda_j-z}.
\end{displaymath}
Since $q$ and $\lambda_j$ are independent, we can condition on $\lambda_j$ and apply McDiarmid's inequality (see the appendix) to the sum to conclude that the term on the right hand side is bounded with overwhelming probability by 
\[ \frac{1}{n^{1-c}}\left(\sum_j\frac{1}{|\lambda_j-z|^2}\right)^{1/2}.\] 
By Lemma \ref{lem:absest}, this last quantity is in turn bounded by $C (\Im z)^{-1}n^{-1/2+a/2+c} \log n$ for some constant $C$ and any $c > 0$.  The fifth and sixth term of equation (\ref{eqn:compare}) are similarly bounded.
For the middle two terms, we used the interlacing property of the eigenvalues of a matrix and its minor, to obtain that the term is bounded by $\frac{1}{n\eta}$, where $\eta = \Im z$.

The last thing we have to control is the term $\frac{1}{n\beta}(b^2_{12}-\mathbb{E}_bb^2_{12})\hat{R}_{11}$ in equation (\ref{eqn:denom}).  Since $b^2_{12}$ has the $\chi$-distributions with degree $(n-1)\beta$, $\frac{1}{n\beta}(b^2_{12}-\mathbb{E}_bb^2_{12})$ is bounded by $n^{-1/2+c}$ with overwhelming probability for any $c > 0$. By equation, (\ref{eqn:absres}) and \ref{prop:indep}, $\hat{R}_{11} \le C (\Im z)^{-1}n^{a+c}$ for any $c > 0$.

Putting everything together, we have the following relationship:
\begin{displaymath}
R_{11}(z) = \frac{1}{-z-R_{11}} + O((\Im z)^{-1}n^{-1/2+a/2+c}+(n\eta)^{-1}+(\Im z)^{-1}n^{-1/2+a+c}).
\end{displaymath}
The condition of the proposition guarantees that the error is $o(1)$, and by the standard argument of inspecting the functional equation of the Stieltjes transform of the semicircle law, the proposition is established.
\end{proof}

\begin{proof} [Proof of Proposition \ref{prop:induct2}]
The proof is similar to that of Proposition \ref{prop:induct1}.  By the assumption, it suffices to establish that
\begin{displaymath}
\left|\frac{1}{n}\sum_j\frac{1}{\lambda_j-z}-R^\beta_{11}(z)\right| = o(1)
\end{displaymath}
with overwhelming probability.  The difference inside the absolute value sign is $\sum_j\frac{\frac{1}{n}-q^2_j}{\lambda_j-z}$.  So, again by a concentration of measure argument, and lemma \ref{lem:absest}, the statement is established.
\end{proof}

\section{Acknowledgements}
The authors would like to thank their respective advisors, M. Aizenman and Ya. G. Sinai, for their interest in this work.  We would also like to thank Prof. Sarnak for pointing out important references regarding asymptotics of classical orthogonal polynomials.

\section{Appendix}
Here we prove a slight modification of McDiarmid's inequality used in our inductive argument.

\begin{proposition}(A modification of McDiarmid's inequality.)
Let $X_1,\hdots,X_n$ be independent subgaussian random variables and suppose $F$ is a function of $n$ variables such that there exists an event $\Omega$ with overwhelming probabibility that if $x_1,\hdots,x_n,\tilde{x_i} \in \Omega$, then  
\begin{displaymath}
|F(x_1,\hdots,x_n) - F(x_1,\hdots,x_{i-1},\tilde{x_i},x_{i+1},\hdots,x_n)| \leq c_i
\end{displaymath}
for all $1\leq i \leq n$, and outside of $\Omega$, $F$ is bounded by a polynomial of $n$.  Then for any $\lambda > 0$, one has
\begin{displaymath}
\mathbb{P}(|F(X)-\mathbb{E}(F(X))| \geq \lambda \sigma) \leq C \exp(-c\lambda^2)
\end{displaymath}
for some absolute constant $C,c > 0$, and $\sigma = \sum_{i=1}^nc_i^2$.
\end{proposition}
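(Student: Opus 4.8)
The plan is to reduce to the classical bounded-differences (McDiarmid) inequality, whose proof runs through the Doob martingale and the Azuma--Hoeffding bound, and to absorb the failure of the bounded-differences hypothesis off the exceptional set into a negligible additive error coming from the polynomial bound on $F$. Write $\sigma^2 = \sum_{i=1}^n c_i^2$ (the statement is to be read with $\sigma=(\sum_i c_i^2)^{1/2}$, as the dimensions require), and let $n^A$ be a bound valid for $|F|$ everywhere.

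First I would pass from the given good set $\Omega$ to a \emph{product} good set. Since the $X_i$ are subgaussian, for any prescribed $C$ there is $R=R(n)=O(\sqrt{\log n})$ such that the box $\mathcal B=\{x:|x_i|\le R,\ 1\le i\le n\}$ satisfies $\mathbb P(X\in\mathcal B)\ge 1-n^{-C}$; replacing $\Omega$ by $\Omega\cap\mathcal B$ we may assume the good set is bounded, still of overwhelming probability, and coordinate-connected (any two of its points are joined by a chain of single-coordinate moves staying inside it — automatic for a box, and preserved after intersection with the norm annuli that actually arise in Section~\ref{sec:one}). Next I would build a genuine, globally Hamming-Lipschitz modification of $F$ by infimal convolution:
\[
\widetilde F(x) \;=\; \inf_{y\in\Omega}\Big[\,F(y) + \sum_{i\,:\,x_i\neq y_i} c_i\,\Big].
\]
Comparing the defining infima for $x$ and for an $x'$ differing from $x$ in a single coordinate $i$ gives $|\widetilde F(x)-\widetilde F(x')|\le c_i$ for \emph{all} $x,x'$, so $\widetilde F$ satisfies the bounded-differences condition with constants $c_i$ on all of $\mathbb R^n$; by coordinate-connectedness of $\Omega$ one has $\widetilde F=F$ on $\Omega$, and $\widetilde F$ still obeys a polynomial bound.

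Applying the classical argument to $\widetilde F$ — form the Doob martingale $M_k=\mathbb E[\widetilde F(X)\mid X_1,\dots,X_k]$, observe $|M_k-M_{k-1}|\le c_k$ using independence of the $X_i$ together with the bounded-differences property of $\widetilde F$, and invoke Azuma--Hoeffding — yields $\mathbb P(|\widetilde F(X)-\mathbb E\widetilde F(X)|\ge t)\le 2\exp(-t^2/(2\sigma^2))$. To transfer back to $F$: on $\Omega$ the two functions coincide and $\mathbb P(X\notin\Omega)\le n^{-C}$, so $|\mathbb E F-\mathbb E\widetilde F|\le 2n^A n^{-C}$, which for $C$ large is far below the scale $\lambda\sigma$ at which the inequality is used; hence
\[
\mathbb P\big(|F-\mathbb E F|\ge\lambda\sigma\big)\ \le\ \mathbb P\big(|\widetilde F-\mathbb E\widetilde F|\ge\tfrac12\lambda\sigma\big) + \mathbb P(X\notin\Omega)\ \le\ 2e^{-\lambda^2/8} + n^{-C},
\]
which is of the stated form (the $n^{-C}$ term is dominated by the Gaussian term in the regime $\lambda\gtrsim\sqrt{\log n}$ in which the inequality is actually invoked, and the estimate is vacuous for small $\lambda$ once $C\ge1$).

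I expect the only real obstacle to be the bookkeeping in the first two steps: the one-step bounded-differences hypothesis on a general $\Omega$ does not by itself propagate to a global Hamming-Lipschitz bound, so one must either verify the coordinate-connectedness of the good set (immediate after truncating to a box) or, equivalently, run the martingale argument directly on the product truncation event with a stopped-martingale version of Azuma, controlling the increments $|M_k-M_{k-1}|\le c_k + 2n^A\,\xi_k$ where $\xi_k=\mathbb P(X\notin\Omega\mid X_{\le k})$ is shown by Markov's inequality to be $\le n^{-C/2}$ for all $k$ off an event of probability $\le 2n^{-C/2+1}$. Either route is routine but needs a little care; everything else is the standard McDiarmid proof.
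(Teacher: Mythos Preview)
Your argument is correct in substance and takes a genuinely different route from the paper. The paper does not build a global Lipschitz extension; instead it runs the classical exponential-moment iteration directly, peeling off one coordinate at a time while conditioning on the good event: one bounds $\mathbb{E}[\exp(tF(X))\mid X_1,\dots,X_{n-1},\Omega]$ via Hoeffding's lemma (the conditional oscillation in $X_n$ is $\le c_n$ by hypothesis), then asserts that the resulting conditional expectation, as a function of $X_1,\dots,X_{n-1}$, again satisfies the hypothesis, and iterates. The polynomial bound off $\Omega$ is used only at the very end to pass from the $\Omega$-conditional expectation back to the unconditional one.

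Your extension-by-infimal-convolution approach is cleaner once $\widetilde F$ is built, since classical McDiarmid then applies as a black box; the price is the coordinate-connectedness requirement on $\Omega$, which you rightly flag and handle either by truncating to a box (valid for the application in Section~\ref{sec:one}) or by falling back on the stopped-martingale variant. The paper's direct iteration nominally avoids coordinate-connectedness, but it quietly assumes that the conditional expectation $\mathbb{E}[F\mid X_1,\dots,X_{n-1},\Omega]$ inherits the bounded-differences property with constants $c_1,\dots,c_{n-1}$, which is itself a point needing justification of the same flavor. So the two arguments trade one piece of bookkeeping for another; both are standard and both work. Your correction that one must read $\sigma^2=\sum_i c_i^2$ is also right, as the paper's own optimization step requires.
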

\begin{proof}
By symmetry, it suffices to show
\begin{displaymath}
\mathbb{P}(F(X)-\mathbb{E}F(X) \geq \lambda \sigma) \leq C\exp(c\lambda^2)
\end{displaymath}
Let $t > 0$ be a parameter to be chosen later. Consider the exponential moment
\begin{displaymath}
\mathbb{E}(\exp(tF(X))|X_1,\hdots,X_{n-1}, \{X_i\}\in \Omega).
\end{displaymath}
Writing $Y = F(X) - \mathbb{E}(F(X)|X_1,\hdots,X_{n-1}, \{X_i\}\in \Omega)$, we can rewrite the above as:
\begin{displaymath}
\mathbb{E}(\exp(tF(Y))|X_1,\hdots,X_{n-1}, {X_i}\in \Omega) \exp(t\mathbb{E}(F(X))|X_1,\hdots,X_{n-1}, \{X_i\}\in \Omega).
\end{displaymath}
By the condition of the theorem, $tY$ fluctuates only by at most $tc_n$ and has mean $0$.  By Hoeffding's lemma, we have
\begin{displaymath}
\mathbb{E}(\exp(tF(Y))|X_1,\hdots,X_{n-1}, \{X_i\}\in \Omega) \leq \exp(O(t^2c_n^2)).
\end{displaymath}
Integrating out the conditioning, we have the bound
\begin{displaymath}
\mathbb{E}(\exp(tF(X)) \leq \exp(O(t^2c_n^2))\mathbb{E}(t(\mathbb{E}F(X)|X_1,\hdots,X_{n-1},\{X_i\}\in \Omega)).
\end{displaymath}
Now the latter expectation is a function of the first $(n-1)$ variables and obeys the same hypothesis, so we can iterate and obtain the bound:
\begin{displaymath}
\exp(\sum_{i=1}^nO(t^2c_i^2)) \mathbb{E}(t(\mathbb{E}F(X)|\{X_i\}\in \Omega))
\end{displaymath}
and by the overwhelming probability condition and the bound outside of the set $\Omega$, we have
\begin{displaymath}
\mathbb{P}(F(X) - \mathbb{E}F(X) \geq \lambda \sigma) \leq \exp(O(t^2\sigma^2)-t\lambda\sigma)
\end{displaymath}
Optimizing in $t$ gives the desired result.
\end{proof}

\end{document}